\documentclass{gtmon_a}
\pdfoutput=1

\usepackage{mdwtab}
\usepackage{float}
\usepackage{amscd}
\usepackage{hhline}
\usepackage{threeparttable}
\usepackage{footmisc}


\proceedingstitle{The Zieschang Gedenkschrift}
\conferencestart{5 September 2007}
\conferenceend{8 September 2007}
\conferencename{Conference in honour of Heiner Zieschang}
\conferencelocation{Toulouse, France}

\editor{Michel Boileau}
\givenname{Michel}
\surname{Boileau}

\editor{Martin Scharlemann}
\givenname{Martin}
\surname{Scharlemann}

\editor{Richard Weidmann}
\givenname{Richard}
\surname{Weidmann}

\title{Some quadratic equations in the free group of rank 2}

\author{Daciberg L Gon\c{c}alves\newline
        Elena Kudryavtseva\newline
        Heiner Zieschang}
\givenname{Daciberg}
\surname{Gon\c{c}alves}
\address{Departamento de Matem\'atica\\
IME-USP\\\newline
Caixa Postal 66281\\
Ag\^encia Cidade de S\~ao Paulo\\
05314-970 S\~ao Paulo SP\\
Brasil\vspace{3pt}\\\newline
Department of Mathematics and Mechanics\\
Moscow State University\\\newline
Moscow 119992\\
Russia\vspace{3pt}\\\newline
Fakult\"at f\"ur Mathematik\\
Ruhr-Universit\"at Bochum\\
44780 Bochum\\
Germany}
\email{dlgoncal@ime.usp.br}
\urladdr{}

\givenname{Elena}
\surname{Kudryavtseva}
\email{ekudr@gmx.de}
\urladdr{}

\givenname{Heiner}
\surname{Zieschang}
\email{marlene.schwarz@ruhr-uni-bochum.de}
\urladdr{}


\volumenumber{14}
\issuenumber{}
\publicationyear{2008}
\papernumber{12}
\startpage{219}
\endpage{294}

\doi{}
\MR{}
\Zbl{}

\arxivreference{}

\keyword{free groups}
\keyword{quadratic equations in free groups}
\keyword{surfaces}
\keyword{absolute degree}
\keyword{Nielsen coincidence theory}
\keyword{group homology}
\keyword{presentation of groups}
\subject{primary}{msc2000}{20E05}
\subject{primary}{msc2000}{20F99}
\subject{secondary}{msc2000}{57M07}
\subject{secondary}{msc2000}{55M20}
\subject{secondary}{msc2000}{20F05}

\received{31 July 2006}
\revised{24 April 2008}
\accepted{14 February 2007}
\published{29 April 2008}
\publishedonline{29 April 2008}
\proposed{}
\seconded{}
\corresponding{}
\version{}


\makeatletter
\def\cnewtheorem#1[#2]#3{\newtheorem{#1}{#3}[section]
\expandafter\let\csname c@#1\endcsname\c@Thm}
\makeatother


\let\xysavmatrix\xymatrix
\def\xymatrix{\disablesubscriptcorrection\xysavmatrix}
\AtBeginDocument{\let\bar\wbar\let\tilde\wtilde\let\hat\what}

\newcommand{\iMR}{\mathit{MR}}
\newcommand{\iNR}{\mathit{NR}}
\newcommand{\iNC}{\mathit{NC}}
\newcommand{\ab}{\mathit{ab}}
\newcommand{\mmod}{\operatorname{mod}}
\makeop{Ad}
\makeop{gr}
\makeop{tr}
\makeop{deg}


\newtheorem{Thm}{Theorem}[section]
\cnewtheorem{Cor}[Thm]{Corollary}
\cnewtheorem{Lem}[Thm]{Lemma}
\cnewtheorem{Pro}[Thm]{Proposition}
\cnewtheorem{ThmDef}[Thm]{Theorem and Definition}
\cnewtheorem{DefThm}[Thm]{Definition and Theorem}
\cnewtheorem{ProDef}[Thm]{Proposition and Definition}
\cnewtheorem{CorDef}[Thm]{Corollary and Definition}
\cnewtheorem{LemDef}[Thm]{Lemma and Definition}

\theoremstyle{definition}
\cnewtheorem{Rem}[Thm]{Remark}
\cnewtheorem{Rems}[Thm]{Remarks}
\cnewtheorem{Def}[Thm]{Definition}
\cnewtheorem{Not}[Thm]{Notation}
\cnewtheorem{Con}[Thm]{Condition}
\cnewtheorem{Cons}[Thm]{Conditions}
\cnewtheorem{was}[Thm]{}
\cnewtheorem{Ex}[Thm]{Example}
\cnewtheorem{Dia}[Thm]{{Diagram}}
\cnewtheorem{DefNot}[Thm]{Definition and Notation}
\cnewtheorem{ConNot}[Thm]{Construction and Notation}
\cnewtheorem{Quad}[Thm]{Quadratic equations}
\cnewtheorem{subsec}[Thm]{}
\cnewtheorem{GeomInt}[Thm]{Geometric Interpretation }
\makeautorefname{Thm}{Theorem}
\makeautorefname{Pro}{Proposition}
\makeautorefname{Rem}{Remark}
\makeautorefname{Rems}{Remarks}
\makeautorefname{Lem}{Lemma}
\makeautorefname{Def}{Definition}
\makeautorefname{Cor}{Corollary}
\makeautorefname{Ex}{Example}

%
%

\newcommand{\ZZ}{\mathbb {Z}}
\newcommand{\NN}{\mathbb N}

\def\Z{{\cal Z}}
\def\Q{{\cal Q}}
\def\L{{\cal O}}

\def\V{{\cal V}}
\newcommand{\Mone}{M_1} 
\newcommand{\Mtwo}{M_2} 
\newcommand{\M}{\wwbar\Mtwo} 
\newcommand{\x}{P_2} 
\newcommand{\y}{P_1} 
\newcommand{\xalpha}{x}
\newcommand{\ybeta}{y}
\renewcommand{\aa}{\alpha}
\newcommand{\bb}{\beta}
\newcommand{\ttheta}{\xi}
\newcommand{\vvarphi}{\eta}

\newcommand{\llangle}{\langle\!\langle}
\newcommand{\rrangle}{\rangle\!\rangle}
\newcommand{\bigllangle}{\bigl\langle\mskip-.5mu\bigl\langle}
\newcommand{\bigrrangle}{\bigr\rangle\mskip-.5mu\bigr\rangle}

\newcommand{\Aut}{\mathrm {Aut}}

\newcommand{\id}{\mathrm {id}}

\newcommand{\rank}{\mathrm {rank}}

\newcommand{\sgn}{w} 

\newcommand{\w}{v} 

\newcommand{\f}{\mathrm {f}}
\newcommand{\nf}{\mathrm {nf}}
\newcommand{\Stab}{\mathrm {Stab}}
\newcommand{\ttilde}{\tilde}
\newcommand{\hhat}{\tilde}

\renewcommand{\int}{{\mathrm{int\,}}}

\newcommand{\Int}[1]{\smash{\stackrel{{\scriptscriptstyle\circ}}{#1}}}


\begin{document}

\begin{webabstract}
For a given quadratic equation with any number of unknowns in any free
group $F$, with right-hand side an arbitrary element of $F$, an
algorithm for solving the problem of the existence of a solution was
given by Culler [Topology 20 (1981) 133--145] using a surface method
and generalizing a result of Wicks [J. London Math. Soc. 37 (1962)
433--444]. Based on different techniques, the problem has been studied
by the authors [Manuscripta Math. 107 (2002) 311--341 and Atti
Sem. Mat. Fis. Univ. Modena 49 (2001) 339--400] for parametric
families of quadratic equations arising from continuous maps between
closed surfaces, with certain conjugation factors as the parameters
running through the group $F$. In particular, for a one-parameter
family of quadratic equations in the free group $F_2$ of rank~2,
corresponding to maps of absolute degree~2 between closed surfaces of
Euler characteristic~0, the problem of the existence of faithful
solutions has been solved in terms of the value of the
self-intersection index $\mu: F_2\to\mathbb{Z}[F_2]$ on the
conjugation parameter.  The present paper investigates the existence
of faithful, or non-faithful, solutions of similar families of
quadratic equations corresponding to maps of absolute degree~0.  The
existence results are proved by constructing solutions.  The
non-existence results are based on studying two equations in
$\mathbb{Z}[\pi]$ and in its quotient $Q$, respectively, which are
derived from the original equation and are easier to work with, where
$\pi$ is the fundamental group of the target surface, and $Q$ is the
quotient of the abelian group $\mathbb{Z}[\pi\setminus\{1\}]$ by the
system of relations $g\sim-g^{-1}$, $g\in\pi\setminus\{1\}$.  Unknown
variables of the first and second derived equations belong to $\pi$,
$\mathbb{Z}[\pi]$, $Q$, while the parameters of these equations are
the projections of the conjugation parameter to $\pi$ and $Q$,
respectively.  In terms of these projections, sufficient conditions
for the existence, or non-existence, of solutions of the quadratic
equations in $F_2$ are obtained.
\end{webabstract}

\begin{asciiabstract}
For a given quadratic equation with any number of unknowns in any free
group F, with right-hand side an arbitrary element of F, an algorithm
for solving the problem of the existence of a solution was given by
Culler [Topology 20 (1981) 133--145] using a surface method and
generalizing a result of Wicks [J. London Math. Soc. 37 (1962)
433--444]. Based on different techniques, the problem has been studied
by the authors [Manuscripta Math. 107 (2002) 311--341 and Atti
Sem. Mat. Fis. Univ. Modena 49 (2001) 339--400] for parametric
families of quadratic equations arising from continuous maps between
closed surfaces, with certain conjugation factors as the parameters
running through the group F. In particular, for a one-parameter family
of quadratic equations in the free group F_2 of rank 2, corresponding
to maps of absolute degree 2 between closed surfaces of Euler
characteristic 0, the problem of the existence of faithful solutions
has been solved in terms of the value of the self-intersection index
mu: F_2 --> Z[F_2] on the conjugation parameter.  The present paper
investigates the existence of faithful, or non-faithful, solutions of
similar families of quadratic equations corresponding to maps of
absolute degree 0.  The existence results are proved by constructing
solutions.  The non-existence results are based on studying two
equations in Z[pi] and in its quotient Q, respectively, which are
derived from the original equation and are easier to work with, where
pi is the fundamental group of the target surface, and Q is the
quotient of the abelian group Z[pi - {1}] by the system of relations g
~ -g^{-1}, g in pi - {1}.  Unknown variables of the first and second
derived equations belong to pi, Z[pi], Q, while the parameters of
these equations are the projections of the conjugation parameter to pi
and Q, respectively.  In terms of these projections, sufficient
conditions for the existence, or non-existence, of solutions of the
quadratic equations in F_2 are obtained.
\end{asciiabstract}

\begin{htmlabstract}
For a given quadratic equation with any number of unknowns in any free
group F, with right-hand side an arbitrary element of F, an
algorithm for solving the problem of the existence of a solution was
given by Culler [Topology 20 (1981) 133&ndash;145] using a surface method
and generalizing a result of Wicks [J. London Math. Soc. 37 (1962)
433&ndash;444]. Based on different techniques, the problem has been studied
by the authors [Manuscripta Math. 107 (2002) 311&ndash;341 and Atti
Sem. Mat. Fis. Univ. Modena 49 (2001) 339&ndash;400] for parametric
families of quadratic equations arising from continuous maps between
closed surfaces, with certain conjugation factors as the parameters
running through the group F. In particular, for a one-parameter
family of quadratic equations in the free group F<sub>2</sub> of rank&nbsp;2,
corresponding to maps of absolute degree&nbsp;2 between closed surfaces of
Euler characteristic&nbsp;0, the problem of the existence of faithful
solutions has been solved in terms of the value of the
self-intersection index &micro;: F<sub>2</sub>&rarr;<b>Z</b>[F<sub>2</sub>] on the
conjugation parameter.  The present paper investigates the existence
of faithful, or non-faithful, solutions of similar families of
quadratic equations corresponding to maps of absolute degree&nbsp;0.  The
existence results are proved by constructing solutions.  The
non-existence results are based on studying two equations in
<b>Z</b>[&pi;] and in its quotient Q, respectively, which are
derived from the original equation and are easier to work with, where
&pi; is the fundamental group of the target surface, and Q is the
quotient of the abelian group <b>Z</b>[&pi;&#x2572;{1}] by the
system of relations g&sim;-g<sup>-1</sup>, g&isin;&pi;&#x2572;{1}.  Unknown
variables of the first and second derived equations belong to &pi;,
<b>Z</b>[&pi;], Q, while the parameters of these equations are
the projections of the conjugation parameter to &pi; and Q,
respectively.  In terms of these projections, sufficient conditions
for the existence, or non-existence, of solutions of the quadratic
equations in F<sub>2</sub> are obtained.
\end{htmlabstract}

\begin{abstract}
For a given quadratic equation with any number of unknowns in any
free group $F$, with right-hand side an arbitrary element of $F$, an
algorithm for solving the problem of the existence of a solution was
given by Culler~\cite{Cu} using a surface method and generalizing a result
of Wicks~\cite{W}. Based on different techniques, the problem has been
studied by the authors~\cite{GKZ2,GKZ1} for parametric families of quadratic
equations arising from continuous maps between closed surfaces, with
certain conjugation factors as the parameters running through the group
$F$. In particular, for a one-parameter family of quadratic equations in
the free group $F_2$ of rank~2, corresponding to maps of absolute degree~2
between closed surfaces of Euler characteristic~0, the problem of the
existence of faithful solutions has been solved in terms of the value
of the self-intersection index $\mu\co F_2\to\mathbb{Z}[F_2]$ on the
conjugation parameter.  The present paper investigates the existence of
faithful, or non-faithful, solutions of similar families of quadratic
equations corresponding to maps of absolute degree~0.  The existence
results are proved by constructing solutions.  The non-existence
results are based on studying two equations in $\mathbb{Z}[\pi]$ and
in its quotient $Q$, respectively, which are derived from the original
equation and are easier to work with, where $\pi$ is the fundamental
group of the target surface, and $Q$ is the quotient of the abelian group
$\mathbb{Z}[\pi\setminus\{1\}]$ by the system of relations $g\sim-g^{-1}$,
$g\in\pi\setminus\{1\}$.  Unknown variables of the first and second
derived equations belong to $\pi$, $\mathbb{Z}[\pi]$, $Q$, while the
parameters of these equations are the projections of the conjugation
parameter to $\pi$ and $Q$, respectively.  In terms of these projections,
sufficient conditions for the existence, or non-existence, of solutions
of the quadratic equations in $F_2$ are obtained.
\end{abstract}

\maketitle

\section{Introduction}\label{sec:Ind0}

Equations in free groups have been extensively studied for many
years: see Culler~\cite{Cu}, Hmelevski{\u\i}~\cite{Hm1a,Hm1b,Hm2}, Lyndon~\cite{L2,L3},
Lyndon and Schupp~\cite[Sections~1.6 and~1.8]{LS}, Makanin~\cite{M},
Razborov~\cite{R}, Steinberg~\cite{St} and Wicks~\cite{W};
see also Gon\c{c}alves and Zieschang~\cite{GZ}, Grigorchuk and
Kurchanov~\cite{GK}, Grigorchuk, Kurchanov and Zieschang~\cite{GriKurZie},
Ol'shanski{\u\i}~\cite{Ol'shanski}, Osborne and Zieschang~\cite{OZ}, and
Zieschang~\cite{Z1964,Z1965}.

For a given quadratic equation $Q(z_1,\dots,z_q)=W$ with any number of unknowns
$z_1,\dots,z_q$ in any free group $F$ with an arbitrary right-hand side
$W\in F$, the problem of the existence of a solution can be studied using Wicks
forms (see Wicks~\cite{W}, Culler~\cite{Cu} and Vdovina~\cite{V1,V2,V3}) which are due to
the geometric approach of Culler~\cite{Cu}. Special case of quadratic
equations has been studied by the authors~\cite{GKZ2,GKZ1,KWZ}
for parametric families of quadratic equations which correpond to maps between
closed surfaces, see~\eqref{eq}.
Also the notions of faithful and non-faithful solutions of such equations were
there introduced, which correspond to the orientation-true maps and the maps
which are not orientation-true, respectively (see Definitions~\ref{def:5.1}(C),
\ref{def:types}(a)). In particular, the problem of the existence of faithful
solutions has been solved in~\cite{GKZ2,GKZ1} in terms of the
self-intersection index $\mu\co F_2\to\ZZ[F_2]$, for
families of quadratic equations with two unknowns in the free group $F_2$
of rank 2, which correspond to
maps of non-vanishing {\it absolute degree} (\fullref{def:abs:degree})
between closed surfaces of Euler characteristic 0.
In this work, we study the existence of faithful, or non-faithful, solutions of
the latter quadratic equations, which correspond to maps of absolute degree 0.

Specifically, let $F_2 = \langle a,b \,|\,\rangle$ be the free group of rank
$2$, $v$ an element of $F_2$, and $\vartheta \in \{1,-1\}$.
We consider the following equations in $F_2$ with the unknowns $z_1,z_2\in F_2$:
\begin{align}
[z_1,z_2] &= \w [a,b]^\vartheta\w^{-1}\cdot [a,b], \label{eqn1} \\
[z_1,z_2] &= \w (a^2b^2)^\vartheta\w^{-1}\cdot a^2b^2, \label{eqn2} \\
z_1^2z_2^2 &= \w [a,b]^\vartheta\w^{-1}\cdot [a,b], \label{eqn3} \\
z_1^2z_2^2 &= \w (a^2b^2)^\vartheta\w^{-1}\cdot a^2b^2. \label{eqn4}
\end{align}
Here $[a,b]=aba^{-1}b^{-1}$, and the conjugation factor $v\in F_2$ is called
the {\it conjugation parameter} of the equation.
The elements $v$, $R_\varepsilon(a,b)\in F_2$, where $R_\varepsilon(a,b)$ is
defined below, can
be regarded as the {\it coefficients} of the equation, see Lyndon and
Schupp~\cite[Section~1.6]{LS}.
The equations~\eqref{eqn1}--\eqref{eqn4} have the form
\begin{equation}
\label{eqF2}
Q_\delta(z_1,z_2) = \w \left( R_\varepsilon(a,b) \right)^\vartheta \w^{-1}
\cdot R_\varepsilon(a,b)
\end{equation}
where
\begin{align*}
Q_\delta(z_1,z_2) &= \begin{cases}
                        [z_1,z_2],   & \delta = +\ ,\\
                        z_1^2 z_2^2, & \delta = -\ ,
                     \end{cases} &
R_\varepsilon(a,b) &= \begin{cases}
                        [a,b],   & \varepsilon = +\ ,\\
                        a^2b^2,  & \varepsilon = -\ .
                     \end{cases}
\end{align*}
As in~\cite{GKZ2}, we denote by $\sgn_\varepsilon\co  F_2 \to \{1,-1\}$
the homomorphism with $\sgn_\varepsilon(a) = \sgn_\varepsilon(b) = \varepsilon$,
called the {\it orientation character}, see \fullref{def:5.1} and
\fullref{rem:or:char}.
Recall~\cite{GKZ1} that a solution $(z_1,z_2)$ of \eqref{eqF2} is called
{\it faithful} if $\sgn_\varepsilon(z_1)=\sgn_\varepsilon(z_2)=\delta$,
and otherwise the solution is called {\it non-faithful}, compare
\fullref{def:5.1}(C).
Of course, every solution of~\eqref{eqn1} is faithful, since $\varepsilon=\delta=+1$;
every solution of~\eqref{eqn3} is non-faithful, since $\varepsilon=+1$ and $\delta=-1$.

We use the following geometric interpretations of
the equations \eqref{eqn1}--\eqref{eqn4}.
These quadratic equations have two unknowns in the free group $F_2$ of rank 2.
Such equations correspond to mappings from a compact surface of Euler
characteristic $-1$ having one boundary component to the bouquet of two
circles (see Culler~\cite{Cu}).
The right-hand sides of \eqref{eqn1}--\eqref{eqn4} have special form which arises from maps
between two closed surfaces of Euler characteristic 0, see~\fullref{sec:Quad15}.
A solution is faithful if and only if the corresponding map is
orientation-true, see~\cite{GKZ2} or \fullref{lem:geom}.

Some faithful solutions of the equation~\eqref{eqn4}, whose corresponding maps are
self-maps of the Klein bottle, were listed in~\cite{GKZ1}, see
also \fullref{rem:gkz2}.
The problem of the existence of faithful solutions of~\eqref{eqF2} with
$w_\varepsilon(v)=\vartheta$
was solved by the authors in~\cite{GKZ2} in terms of the self-intersection
index $\mu(v)\in\ZZ[F_2]$ of the conjugation parameter $v$, see
\fullref{rem:gkz2}. These results are illustrated in \fullref{tbl0} for special
values of $v$.

The goal of the present paper is to investigate the existence of faithful, or
non-faithful, solutions of equation~\eqref{eqn4} in the remaining cases
formulated in detail as follows:
\begin{equation}
\label{eq:cond}
\begin{array}{l}
\mbox{the solution is faithful and $w_\varepsilon(v)=-\vartheta$, or} \\
\mbox{the solution is non-faithful.}
\end{array}
\end{equation}
Such solutions actually correspond to mappings of absolute degree~0
(see \fullref{def:abs:degree} and \fullref{cor:cond}).
Our main results are given in Tables~\ref{tbl1} and~\ref{tbl3}, for faithful solutions, and
in Tables~\ref{tbl2} and~\ref{tbl4}, for non-faithful solutions, of an equation~\eqref{eqN} which
is equivalent to~\eqref{eqF2}.
The results are formulated in terms of the projection $\bar v\in\pi$
of the conjugation parameter $v\in F_2$ to the fundamental group $\pi$ of the
corresponding target surface via
$$p_\pi\co  F_2 \to \pi =
F_2/N, \quad N = \llangle  R_\varepsilon(a,b) \rrangle,$$
as well as in terms of $p_Q(V)\in Q$, which is the image
of $v_0^{-1}v\in N$ under the composition
\begin{equation}
\label{eq:Q}
N \stackrel{q_N}{\longrightarrow} \ZZ[\pi] \stackrel{p_Q}{\longrightarrow}
Q=(\ZZ[\pi\setminus\{1\}])/\langle g+g^{-1}\,|\,g\in\pi\setminus\{1\}\rangle ,
\end{equation}
where $v_0\in F_2$ is a suitable representative of $\bar v\in\pi$ in $F_2$,
see~\eqref{eq:**} and~\eqref{eq:***}, while
$V:=q_N(v_0^{-1}v)\in \ZZ[\pi] \approx N/[N,N]$,
see~\eqref{eq:shortNab} and~\eqref{eq:Nab}.
Here $\llangle u_1,u_2,\ldots\rrangle \subset G$ and $\langle u_1,u_2,\ldots\rangle\subset G$
denote the minimal normal subgroup and the minimal subgroup,
respectively, containing the elements $u_1,u_2,\ldots \in G$ of a group $G$.

To establish the non-existence results given in \fullref{thm:class} and
Tables~\ref{tbl1} and~\ref{tbl2}, we
apply the Nielsen root theory for maps between closed surfaces
(see~\fullref{sec:Quad15}), geometric results of Kneser~\cite{K}
about maps of absolute degree 0 (see also Epstein~\cite{Ep}),
and algebraic
results (see Zieschang~\cite{Z1964,Z1965}, Zieschang, Vogt and
Coldewey~\cite{ZVC} and Ol'shanski{\u\i}~\cite{Ol'shanski}; see also
Kudryavtseva, Weidmann and Zieschang~\cite[Corollary~2.4]{KWZ}) on epimorphisms of
surface groups to free groups (\fullref{lem:geom},
Propositions~\ref{pro:roots} and~\ref{pro:roots:A=0}).
These results allow us to reduce the problem of the existence of (faithful, or
non-faithful, resp.) solutions of the equation~\eqref{eqF2} in $F_2$ satisfying
the condition~\eqref{eq:cond} to the problem of the existence of a (faithful,
or non-faithful, resp.) solution of the following equation in the
subgroup $N=\llangle  \aa\bb\aa^{-\varepsilon} \bb^{-1} \rrangle $ of
$F_2=\langle \aa,\bb\mid\rangle$:
\begin{equation}
\label{eqN}
\xalpha \ybeta \xalpha^{-\delta} \ybeta^{-1}
= \w \bigl( \aa\bb\aa^{-\varepsilon}\bb^{-1} \bigr)^\vartheta \w^{-1}
\cdot \aa\bb\aa^{-\varepsilon}\bb^{-1},
\end{equation}
with the unknowns $\xalpha\in N$, $\ybeta\in F_2$,
see~\eqref{eqn1'}--\eqref{eqn4'}
in \fullref{subsec:appl}, and
Corollaries~\ref{cor:cond} and~\ref{cor:Wecken}(A) (see also
\fullref{thm:class}). Here the free generators $a,b$ of
$F_2=\langle a,b\mid\rangle$ and the unknowns $z_1,z_2$ are replaced by the new
generators and unknowns via
\begin{equation}
\label{eq:change}
\begin{aligned}
 \aa&=a & \bb&=b & \mbox{for } \varepsilon&=1, &
\qquad\qquad \aa&=ab, & \bb&=b^{-1} & \mbox{for } \varepsilon&=-1, \\
 x&=z_1,&    y&=z_2 & \mbox{for } \delta&=1, &
\qquad\qquad x&=z_1z_2,& y&=z_2^{-1} & \mbox{for } \delta&=-1,
\end{aligned}
\end{equation}
thus
$w_\varepsilon(\aa)=1$, $w_\varepsilon(\bb)=\varepsilon$.
A solution $(\xalpha,\ybeta)$ of~\eqref{eqN} in $N$ is called {\it faithful}
if $w_\varepsilon(\ybeta)=\delta$.
We also prove (\fullref{rem:rank1}) that any solution of~\eqref{eqN} in
$N$ satisfies
$$\bar v = \bar \ybeta^k \quad\text{and}\quad \vartheta \delta^k=-1 
 \qquad \mbox{for some} \quad k\in\ZZ.$$
To establish further non-existence results (Tables~\ref{tbl3}
and~\ref{tbl4}), we apply
the algebraic approach developed in this paper (see~\fullref{sec:Quad2}) to the remaining cases of the equation~\eqref{eqN},
namely to those cases where the problem was not solved by the
preceding methods (the so called ``mixed'' cases in Tables~\ref{tbl1}
and~\ref{tbl2}, see
\fullref{rem:tables} and \fullref{def:mixed}). From the
equation~\eqref{eqN} in $N$, two equations are derived using our
algebraic approach, which have solutions corresponding to solutions
of~\eqref{eqN} if the latter exist. The {\it first derived
equation} (\fullref{thm:first}) is
\begin{equation}
\label{first:eq'}
(1-\delta\bar\ybeta) \tilde\xalpha = 1 + \vartheta \bar v,
\end{equation}
in the group ring $\ZZ[\pi] \approx N^{\ab}=N/[N,N]$ of $\pi$
(see~\fullref{pro:Nab}),
with two unknowns $\tilde\xalpha\in \ZZ[\pi]$, $\bar\ybeta\in\pi$,
and the parameter $\bar v \in \pi$, see \fullref{thm:first}. A solution
$(\tilde\xalpha,\bar\ybeta)$ of~\eqref{first:eq'} is called {\it faithful} if
$w_\varepsilon(\bar\ybeta)=\delta$, and it is called {\it non-faithful}
otherwise.
For each solution of the equation \eqref{first:eq'} in the ``mixed'' cases
(see above), we assign an equation
in the quotient $Q$ of $N$, see~\eqref{eq:Q},
namely the equations~\eqref{eqn2_2}, \eqref{eqn3_2}, \eqref{eqn4_2nf} and
\eqref{eqn4_2f},
respectively, in~\fullref{subsec:2der}. We use the fact that
the quotient $Q$
is isomorphic to $[N,N]/[F_2,[N,N]]$, see \fullref{pro:QH2}.
The obtained in this way {\it second derived equation}
(\fullref{thm:second}) has unknowns $X\in Q$, $Y\in \ZZ[\pi]$, a parameter
$p_Q(V) \in Q$ determined by the conjugation parameter $v$, see~\eqref{eq:Q},
and some unknown
integers which are parameters of the solutions of~\eqref{first:eq'}.
We find all values of the parameter $p_Q(V)$ for which the second derived
equation admits a solution (Theorems~\ref{thm:partit34nf} and
\ref{thm:partit24f}), and we use the obvious fact
that the non-existence of a
(faithful or non-faithful) solution of any of the derived equations implies
the non-existence of a (faithful or non-faithful, resp.) solution of the
corresponding quadratic equation \eqref{eqN}.

The paper is organized as follows. In~\fullref{sec:Quad1}, we consider
more general quadratic equations in free groups and briefly formulate
some recent results of the authors about faithful solutions of such equations,
including the equation~\eqref{eqF2} with
$w_\varepsilon(v)=\vartheta$, which correspond to maps of absolute degree 2.
In~\fullref{sec:Quad15}, we recall results of~\cite{GKZ2,GKZ1}, and
Kudryavtseva, Weidmann and Zieschang \cite{KWZ} on
the relationship between the quadratic equations and the Nielsen root theory,
and derive some properties of solutions of~\eqref{eqF2}
satisfying~\eqref{eq:cond} from geometric results of Kneser~\cite{K} about
maps having absolute degree 0 and algebraic results of
Zieschang~\cite{Z1964,Z1965}, Zieschang, Vogt and Coldewey~\cite{ZVC},
and Ol'shanski\u{\i}~\cite{Ol'shanski} on homomorphisms of the surface groups to free groups.
As a result, we obtain Tables~\ref{tbl1} and~\ref{tbl2},
and reduce our problem to study the single equation~\eqref{eqN} in $N$.
In~\fullref{sec:Quad2}, we study some quotients of the subgroup
$N=\llangle \aa\bb\aa^{-\epsilon}\bb^{-1}\rrangle $ of the free group
$F_2=\langle \aa,\bb\mid\rangle$ of rank 2.
In particular, we prove that the quotient $[N,N]/[F_2,[N,N]]$ is isomorphic to
the quotient $Q$ in~\eqref{eq:Q},
see \fullref{pro:QH2}, and we obtain a presentation
for the quotient $N/[F_2,[N,N]]$.
In~\fullref{sec:Quad3}, we describe and derive
two equations, namely the first and the second derived equations, see
above, which are easier to work with than the original equation~\eqref{eqN}.
The second derived equation is constructed when the first derived equation
admits a solution, while the original quadratic equation does not necessarily
admit a solution, see \fullref{ex:Wicks}.
In~\fullref{sec:quad4}, we investigate the existence of a solution of
the second derived equation in the ``mixed'' cases of Tables~\ref{tbl1}
and~\ref{tbl2}.
The results of Sections~\ref{sec:Quad3} and~\ref{sec:quad4} are summarized
in Tables~\ref{tbl3} and~\ref{tbl4} of~\fullref{sec:Quadtab}.

It is not clear whether our results can be obtained using Wicks forms.
The results obtained here are entirely different from
the type of results of Wicks \cite{W} and Vdovina~\cite{V1,V2,V3} using the
Wicks forms, since we are able to consider certain families of equations at
once, in contrast with methods which consider only one equation at the time.

\subsection*{Acknowledgements}
This work was partially done during the visit of the first and third
authors at the Department of Mathematics and Mechanics of Moscow
State University in May--June 2002, and during the visit of the second
author at the Department of Mathematics of S\~ao Paulo University in
October--December 2003. The visits of the first and second authors were
supported by the FAPESP -- Projeto Tem\'atico Topologia Geometrica e
Alg\'ebrica, and the FAPESP, respectively. The visit of the third author
was supported by the Stiftungsinitiative Johann Gottfried Herder.

The first and the second authors are grateful to Heiner Zieschang for
our long collaboration which resulted in fifteen joint papers on different
subjects, like coincidence and intersection theory, low-dimensional
topology, branched coverings, curves and surfaces (where eight papers had
all three of us as coauthors). His friendship, hospitality, and high
level of his research, together with his deep knowledge certainly have
had a great influence to our mathematical growth and achievements.

\section{Recent results on quadratic equations} \label{sec:Quad1}

In the free group $F_r = \langle a_1, \dots, a_r\,|\,\rangle$ we
consider quadratic equations of the form
$$Q(z_1,\dots,z_q) = (\w_1 R \w_1^{-1})^{c_1}
     \cdots (\w_\ell R \w_\ell^{-1})^{c_\ell}
\quad \mbox{with} \quad R = R(a_1,\dots,a_r),$$
where $Q$ and $R$ are some ``quadratic words'' in variables $z_1,\dots,z_q$
and $a_1,\dots,a_r$, respectively, $q \geq 1$, $r \geq 1$ and all
$c_j \neq 0$ are integers, $\w_j \in F_r$. Here $z_1, \dots, z_q$ are
considered as ``unknowns'', while $\ell, c_1, \dots, c_{\ell}$ and
$\w_1, \dots, \w_{\ell}$ are ``given parameters''.
Without loss of generality, one takes $Q$ and
$R$ to be products of squares $z_i^2$ or commutators
$[z_{2i-1}, z_{2i}] = z_{2i-1}z_{2i}z_{2i-1}^{-1}z_{2i}^{-1}$.

The following notation reflects the topological origin of the groups
considered, namely fundamental groups of surfaces with boundary, see
also \fullref{lem:geom}.

\begin{Def}\label{def:5.1} Let $r, q$  be integers $\ge 1$ and
$\varepsilon, \delta \in \{+1, -1\}$; often we will use
$\varepsilon, \delta$ only as signs $+,-$.

(A)\qua Let $F_{r,\varepsilon}$  denote the free group
$F_{r} =  \langle a_1, \dots, a_{r}\ |\ \rangle $ of rank
$r$ together with a homomorphism
$\sgn_\varepsilon \, \colon F_{r} \to \ZZ^* = \{1, -1\}$
called the {\it orientation character} where
   $$
\sgn_+ \co  a_j \mapsto 1, \quad \sgn_- \co  a_j \mapsto -1
\mbox{\quad for } \ 1 \le j \le r.
   $$
We call $F_{r,\varepsilon}$ a \textit{ free group with orientation character}.
Define
\begin{align*}
Q_\delta(z_1, \dots, z_q) &= \begin{cases}
 \prod_{i=1}^{q/2}[z_{2i-1},z_{2i}], & \delta=+, \\
 \prod_{i=1}^{q}   z_i^2,            & \delta=-,
 \end{cases} \\
\text{and}\qquad R_\varepsilon(a_1, \dots, a_r) &= \begin{cases}
\prod_{i=1}^{r/2}[a_{2i-1},a_{2i}], & \varepsilon=+, \\
\prod_{i=1}^{r}   a_i^2,            & \varepsilon=-.
  \end{cases}
\end{align*}
(B)\qua In the group $F_{r,\varepsilon}$ we consider quadratic equations of the form
\begin{equation}
\label{eq}
Q_\delta (z_1, \dots, z_q) =
\prod_{j=1}^\ell \w_j \cdot(R_{\varepsilon}(a_1,\dots,a_r))^{c_j}\cdot
\w_j^{-1}.
\end{equation}
Here $c_j \ne 0$ are integers and $\w_j \in F_r$; of course, when
$\delta = +$ or $\varepsilon = +$ then $q$ or $r$, resp., is even. Now
$z_1, \dots, z_q$ are considered as ``unknowns'', while
$\ell, c_1, \dots, c_{\ell}$ and $\w_1, \dots, \w_{\ell}$ are
``given parameters''.

(C)\qua If $\sgn_\varepsilon (z_j) = \delta$, $1 \le j \le q$, then the solution
$(z_1, \dots, z_q)$ is called \textit{faithful}, and otherwise it is called
{\it non-faithful}. This gives the following restrictions for faithful
solutions: if $\varepsilon= +$ then $\delta$ must be $+$,
if $\varepsilon= -$ and $\delta = +$ then the length of each $z_j$ must be even,
if $\varepsilon=\delta = -$ then all lengths must be odd. Hence, one should
only consider
$(\delta, \varepsilon) \in\{ (+,+), (-,-), (+,-)\}$ in the case of faithful
solutions, and, similarly,
$(\delta, \varepsilon) \in\{ (+,-), (-,+), (-,-)\}$ in the case of non-faithful
solutions.
\end{Def}

\begin{table}[ht!]\centering
\begin{footnotesize}
\begin{tabular}{||r|c|c|c|c|c||}
\hhline{|t:======:t|}
Case& $\delta$ & $\varepsilon$ & $\vartheta$ &
  {conditions on $v$} & faithful solution $(z_1,z_2)$ \\
\hhline{|:=:=:=:=:=:=:|}
(1)\ a\ & + & + & $+$   & $v=a$ & $(a^2,\ b)$
\\
\hhline{||~|~|~|~|-|-||}
b\ &  &  &  & $v=a^{-1}$ & $(ba^{-1}b^{-1}a^{-1}b^{-1},\ ba^2b^{-1})$
\\
\hhline{||~|~|~|~|-|-||}
c\ &  &  &    & $v=a^n, \  |n|\ne 1$ & $\emptyset$
\\
\hhline{|:=:=:=:=:=:=:|}
(2)\ a\           & + & $-$ & $-$   & $v=a^n$, $n$ odd &
       $ (a^nb,b^{-2})$
\\
\hhline{||~|~|~|-|-|-||}
b\              &   &   & $+$   & $v=a^n$, $n$ even   &  $\emptyset$
\\
\hhline{||~|~|~|~|-|-||}
c\              &   &   &    & $v=(ab)^n$ &  $\emptyset$
\\
\hhline{|:=:=:=:=:=:=:|}
(3)\ \phantom{1}\ &$-$& + &     & arbitrary $v$  &  $\emptyset$
\\
\hhline{|:=:=:=:=:=:=:|}
(4)\ a\          &$-$&$-$ & $+$   & $v=ab$ & $(aba, \ b)$
\\
\hhline{||~|~|~|~|-|-||}
b\     & &  &  &  $v=(ab)^{-1}$  & $(b^{-1}ab^3, \ b^{-2}ab^2)$
\\
\hhline{||~|~|~|~|-|-||}
c\     & &  &        & $v=(ab)^n$, $|n|\ne 1$  &  $\emptyset$
\\
\hhline{||~|~|~|~|-|-||}
d\     & &  &        & $v=a^n$, $n$ even & $\emptyset$
\\
\hhline{||~|~|~|~|-|-||}
e\     & &  &        & $v=a^nb$, $n$ odd & $(a^nba^{2-n},\ b)$
\\
\hhline{||~|~|~|-|-|-||}
f\     & &  & $-$ &  $v=a^n$, $n$ odd &  $(a^nb^{-1}a^{-n},\ b)$
\\
\hhline{|b:======:b|}
\end{tabular}
\end{footnotesize}
\smallskip
\caption{Faithful solutions of
$Q_\delta(z_1,z_2)=vR_\varepsilon(a,b)^\vartheta v^{-1}R_\varepsilon(a,b)$
for some values of $v$ with $\sgn_\varepsilon(v)=\vartheta$}
\label{tbl0}
\end{table}

Many other values of $v$ for which the equation has a faithful
solution or not can be obtained from the solutions listed in
\fullref{tbl0}, by applying an automorphism to $v$ of the free group
$F_2$ which sends $B:=R_\varepsilon(a,b)$ to $B^{\pm 1}$, as given
in~\cite[Corollary~7.2]{GKZ1} and~\cite[Corollary~5.22]{GKZ2}.

\begin{Rem} \label{rem:gkz2}
In~\cite{GKZ2}, the authors studied faithful solutions of the quadratic
equation~\eqref{eq} in the case that all numbers $\sgn_\varepsilon(\w_j)c_j$,
$1\leq j \leq \ell$, have the same sign and $A \cdot (r{-}1) = q - 2 + \ell$ where
$A = |c_1| + \ldots + |c_\ell|$ (that is, the value $q$ is ``minimal'',
see Propositions~\ref{pro:roots} and~\ref{pro:roots:A>0}(A)).
We gave an algebraic criterion (see~\cite[Theorem~5.12]{GKZ2},
or~\cite[Theorem~5.17]{KWZ}) for the existence of a faithful
solution of the quadratic equation~\eqref{eq} in terms of the
self-intersection indices
$\mu(\w_1^{-1} \w_j)\in\ZZ[F_{r,\varepsilon}\setminus \{1\}]$,
$2 \leq j \leq \ell$,
and the intersection indices
$\lambda(\w_1^{-1} \w_i, \w_1^{-1} \w_j)\in\ZZ[F_{r,\varepsilon}]$,
$2\leq j<i\leq \ell$.
As an application, we investigated the existence of faithful solutions of
the quadratic equations~\eqref{eqn1}--\eqref{eqn4} for some values of the conjugation
parameter $v$ with
$\vartheta=w_\varepsilon(v)$, see~\cite[Corollary~7.2, Lemma~7.3]{GKZ1},
\cite[Proposition~5.15, Corollary~5.22]{GKZ2},
or~\cite[Proposition~5.21]{KWZ}. The latter condition is equivalent
to the fact that the corresponding maps have absolute degree~2, see
\fullref{def:abs:degree} and \fullref{cor:cond}.  These results are
summarized in \fullref{tbl0} above.
Some faithful solutions from~\cite[Corollary~7.2]{GKZ1}
corresponding to maps of absolute degree $0$
are given in \fullref{tbl1}, case~(4a), and \fullref{tbl3}, case~(4c).
\end{Rem}

\section{Quadratic equations and Nielsen root theory} \label{sec:Quad15}

In Sections~\ref{subsec:abs:degree} and~\ref{subsec:connroot}, we recall
the notion of absolute degree of a map
(\fullref{def:abs:degree}) and some results of~\cite{GKZ1}
and~\cite{GKZ2} (see also~\cite{KWZ}) about
solutions of the quadratic equation~\eqref{eq}.
Then, in \fullref{subsec:appl}, we apply some of these results (\fullref{lem:geom},
Propositions~\ref{pro:roots} and~\ref{pro:roots:A=0}(A),~(C),
and \fullref{cor:Wecken}(A)) to the equations~\eqref{eqn1}--\eqref{eqn4} and summarize the
obtained results in \fullref{thm:class} and Tables~\ref{tbl1}
and~\ref{tbl2}.
Other results of Sections~\ref{subsec:abs:degree} and~\ref{subsec:connroot}
(Propositions~\ref{pro:roots:A>0} and~\ref{pro:roots:A=0}(B), and
\fullref{cor:Wecken}(B),~(C)) will
not be used in our applications and can be skipped in the first reading
(see also \fullref{rem:alternative}).

Every solution of the equation \eqref{eq} provides a continuous map
$\bar f\co  \wwbar\Mone \to \wwbar\Mtwo$ between two closed surfaces (see below)
with exactly $\ell$ roots having the multiplicities $c_1,\dots,c_\ell$,
see~\fullref{subsec:connroot}, \cite{G2}, \cite[5.8, 5.21]{GKZ2},
or~\cite[Lemma~5.5(b)]{GKZ1}.
Here the closed surfaces $\wwbar\Mone$ and $\wwbar\Mtwo$ correspond to the
quadratic words $Q_\delta(z_1,\dots,z_q)$ and $R_\varepsilon(a_1,\dots,a_r)$,
respectively, and are defined as follows, see~\cite{GKZ2}.
If $\varepsilon=1$, we denote $\wwbar\Mtwo:=S_{r/2}$, a closed orientable surface
of genus $r/2$; if $\varepsilon=-1$ then $\wwbar\Mtwo:=N_r$, a closed
non-orientable surface of genus $r$ (that is, the sphere with $r$ crosscuts),
thus $N_{r+1}$ admits $S_r$ as an orientable two-fold covering.
Similarly, we denote $\wwbar\Mone:=S_{q/2}$ if $\delta=1$, and
$\wwbar\Mone:=N_{q}$ if $\delta=-1$.

In particular, the special quadratic equations~\eqref{eqn1}--\eqref{eqn4} that we are going to
study correspond to
maps between closed surfaces of Euler characteristic 0.
We investigate the existence of non-faithful solutions of these equations, and
the existence of faithful solutions of the equations with
$\sgn_\varepsilon(v)=-\vartheta$, see~\eqref{eq:cond}, which actually
correspond to mappings of absolute degree 0, see \fullref{cor:cond}.

Consider two compact surfaces $\Mone$ and $\Mtwo$ having,
respectively, $\ell$ and one boundary components, where $\Mone$
(respectively, $\Mtwo$) is obtained from $\wwbar\Mone$ (respectively,
$\wwbar\Mtwo$) by removing the interiors of $\ell$ disjoint closed
disks $D_1,\dots,D_\ell\subset\wwbar\Mone$ (respectively, the interior
of a closed disk $D\subset \wwbar\Mtwo$). Choose basepoints
$\y\in\partial D_1$, $\x\in\partial D$. The fundamental groups of the
surfaces admit the following {\it canonical presentations}:
\begin{align}
\label{eq:pi1}
&\begin{aligned}
\pi_1(\Mone,\y) &= \langle b_1,\dots,b_q,d_1,\dots,d_\ell \mid
Q_\delta(b_1,\dots,b_q) d_\ell^{-1}\ldots d_1^{-1} \rangle \\
&\approx F_{q+\ell-1}, \\
\pi_1(\Mtwo,\x) &= \langle a_1,\dots,a_r,d \mid
R_\varepsilon(a_1,\dots,a_r) d^{-1} \rangle \\
&\approx F_{r},
\end{aligned} \\
\label{eq:pi1closed}
&\begin{aligned}
\pi_1(\wwbar\Mone,\y) &= \langle b_1,\dots,b_q \mid Q_\delta(b_1,\dots,b_q) \rangle
= F_q/\llangle  Q_\delta (b_1,\dots,b_q)\rrangle , \\
\pi_1(\wwbar\Mtwo,\x) &= \langle a_1,\dots,a_r \mid R_\varepsilon(a_1,\dots,a_r) \rangle
= F_r/\llangle  R_\varepsilon (a_1,\dots,a_r)\rrangle ,
\end{aligned}
\end{align}
which correspond to some ``canonical systems of cuts'' on surfaces,
see~\cite{GKZ2} or~\cite{KWZ}.

A continuous map $f\co M_1\to M_2$ is called {\it proper} if
$\partial M_1=f^{-1}(\partial M_2)$, that is, the boundary of the source is the
preimage of the boundary of the target.

\begin{Lem}[{{\cite[Lemma~5.5]{GKZ1}}}, {{\cite[Lemma~5.9]{KWZ}}}]
\label{lem:geom}
The existence of a solution $(z_1,\dots,z_q)$ of the
equation~\eqref{eq} is equivalent to the
existence of a proper map $f\co  \Mone \to \Mtwo$ such that
$f(\y)=\x$ and the induced homomorphism
$f_\#\co \pi_1(\Mone,\y)\to\pi_1(\Mtwo,\x)$ sends
$$f_\#(d_j) = \w_1^{-1}\w_j \cdot d^{c_j} \cdot \w_j^{-1}\w_1, \quad
1\leq j \leq \ell.$$
Under this correspondence, the elements $z_1,\dots,z_q$ of a
solution are considered as the conjugates (with the conjugating factor $\w_1$)
of the images under $f_\#$ of the elements
$b_1,\dots,b_q$ of the canonical system of generators~\eqref{eq:pi1}, that is
$\w_1^{-1} z_i \w_1 = f_\#(b_i)$, $1\le i\le q$.

The solution $(z_1,\dots,z_q)$ is faithful if and only if the map $f$ is
orientation-true.
\end{Lem}

\subsection{Absolute degree of a continuous map} \label {subsec:abs:degree}

The next two definitions are excerpted from~\cite[Definitions~4.5, 4.6]{KWZ}
and introduce useful tools for studying continuous maps between
manifolds of the same dimension.

\begin{Def}\label{def:types}
(a)\qua In a non-orientable
manifold, the local orientation is either preserved or changed to the inverse
when moved along a closed curve $\gamma$; according to this property
$\gamma$ is called {\it orientation-preserving} or
{\it orientation-reversing}, respectively. Homotopic (even homologic) curves
are the same with respect to orientation.
On a surface, a simple loop $\gamma$ is orientation-preserving
if and only if $\gamma$ is {\it two-sided}; otherwise the curve is
{\it one-sided}. Following P~Olum~\cite{O}, a map $f\colon M_1 \to M_2$
is called {\it orientation-true} if
orientation-preserving loops are sent to orientation-preserving ones and
orientation-reversing loops to orientation-reversing ones.

(b)\qua Following Hopf~\cite{H}, Olum~\cite{O} and Skora~\cite{Sk}, we
distinguish three types of maps.  A {\it map $f$ is of Type I} if it
is orientation-true. If $f$ is not orientation-true and does not map
orientation-reversing loops to null-homotopic ones then $f$ is of {\it
Type II}.  The remaining maps are said to be of {\it Type III}; they
are not orientation-true and map at least one orientation-reversing loop
to a null-homotopic one. Of course, the type of a map can be determined
by studying its effect on the fundamental group.
\end{Def}

\begin{Rem} \label{rem:or:char}
The orientation character
$w_\varepsilon\co F_{r,\varepsilon}\to\ZZ^*=\{1,-1\}$ defined in
\fullref{def:5.1}(A) has the following geometric meaning.
Consider the induced character $\pi_1(\bar M_2,P_2)\to\{1,-1\}$,
see~\eqref{eq:pi1closed}, which will be again denoted by $w_\varepsilon$.
For any closed curve $\gamma$ on $\bar M_2$ based at $P_2$, we have
$w_\varepsilon([\gamma])=1$ if $\gamma$ is orientation-preserving, and
$w_\varepsilon([\gamma])=-1$ if $\gamma$ is orientation-reversing. Here
$[\gamma]\in \pi_1(\bar M_2,P_2)$ denotes the homotopy class of $\gamma$.
\end{Rem}

For mappings between oriented closed manifolds, the notion $\deg(f)$,
the degree of a map $f$, is well known, and there is a variety of ways
to compute it. It is easily generalized to compact oriented manifolds
with boundary if one restricts oneself to proper maps (see
\fullref{lem:geom}). For non-orientable manifolds one can also define
the notion of a degree, as done by H~Hopf~\cite{H}, H~Kneser\cite{K}
and D\,B\,A~Epstein~\cite{Ep}. We recall the definition for surfaces
as given by R~Skora~\cite{Sk}; see also Brown and Schirmer~\cite{BSc}.

\begin{Def}[Absolute degree]
\label{def:abs:degree}
Let $f\colon M_1 \to M_2$ be a proper map between compact surfaces.

(a)\qua The {\it absolute degree of} $f$, denoted by $A(f)$, is defined as follows.
There are three cases according to the type of the mapping $f$.
\begin{enumerate}
\item[(I)] $f$ is of type I, that is, orientation-true. Let $\hat M_i=M_i$ and
$k_i=1$ if $M_i$ is orientable and $\hat M_i$ be the $2$--fold
orientable covering of $M_i$ and $k_i=2$ otherwise. In particular,
$\hat M_i$ is an orientable $k_i$--fold covering of $M_i$. Since $f$ is
orientation-true, there exists a lift $\hat f \colon \hat M_1 \to \hat M_2$.
After fixing orientations on $\hat M_1$ and $\hat M_2$, the
degree of $\hat f$ is defined, and we put
$$A(f) = \tfrac{k_2}{k_1}\bigl|\deg(\hat f)\bigr|.$$

\item[(II)] If $f$ is of type II, we define $A(f) = 0$.

\item[(III)] For $f$  of type III, put $\ell = [\pi_1(M_2) : f_\#(\pi_1(M_1))]$ and
let $\bar M_2 \to M_2$ be the $\ell$--fold (unbranched) covering corresponding
to the subgroup $f_\#(\pi_1(M_1))$. Now  $f$ has a lift
$\bar f \colon M_1 \to \bar M_2$ which induces an epimorphism on the fundamental
groups. Then $A(f)$ is either $\ell$ or $0$ depending on whether the map
\[
\bar f_* \colon\ \ZZ_2 = H_2(M_1, \partial M_1; \ZZ_2)
\to H_2(\bar M_2, \partial \bar M_2; \ZZ_2) =
\begin{cases}
\ZZ_2 & \text{if }\ell < \infty, \\
0 & \text{if }\ell = \infty
\end{cases}
\]
is bijective or not, respectively.
\end{enumerate}
In particular, if $\ell = \infty$, then $A(f) = 0$. Further,  if
$A(f) \neq 0$ then $\ell \mid A(f)$.

(b)\qua The {\it geometric degree of} $f$ is the least non-negative integer $d$
such that, for some  disk $D \subset \Int{M}_2$ and map $g$ properly
homotopic to $f$, the restriction of $g$ to $g^{-1}(D)$
is a $d$--fold covering.
{\it The geometric degree is never smaller than the absolute degree. }
\end{Def}

For branched or unbranched coverings, the definition of the absolute degree
does not give much new and the situation is much simpler.

\begin{Pro}\label{pro:1.c}
{\rm (a)}\qua Every covering, branched or unbranched, is orientation-true.

{\rm (b)}\qua The geometric and the absolute degree of a (branched or
unbranched) covering coincide and are equal to the order of the covering,
that is, the number of leaves.

{\rm (c)}\qua The geometric and the absolute degree of any continuous
map between closed surfaces coincide.
\end{Pro}
\begin{proof}
See Kneser~\cite{K}.
\end{proof}

\subsection{Relation with the Nielsen root theory of maps}
\label {subsec:connroot}

The geometric interpretation of solutions
of~\eqref{eq} by means of
proper maps $f\co \Mone\to\Mtwo$ between the compact surfaces
$\Mone$, $\Mtwo$ with non-empty boundary (see \fullref{lem:geom}) can be
reformulated in terms of maps $\bar f\co \wwbar\Mone \to \wwbar\Mtwo$ between
the closed surfaces $\wwbar\Mone$, $\wwbar\Mtwo$ obtained from $\Mone$, $\Mtwo$
by attaching disks to the boundary components and radially extending the map
$f$ to the disks, see~\cite[5.21]{GKZ2}.
Now, the centers of the disks in $\wwbar\Mone$ form the preimage of the
center $c$ of the disk in $\wwbar\Mtwo$.

The {\it root problem} for a map $\bar f\co \wwbar\Mone\to\wwbar\Mtwo$ and a
point $c\in \wwbar\Mtwo$ is to find a map $\bar g$ homotopic to $\bar f$ which
has the minimal number
$$\iMR[\bar f] := \min_{\bar g\simeq \bar f} |\bar g^{\;-1}(c)|$$
of roots $\bar g^{\;-1}(c)$ among all mappings $\bar g$ homotopic to $\bar f$.
The roots
of $\bar f$ split into {\it Nielsen equivalence classes} similar
to the cases of the coincidence problem and intersection problem,
see~\cite[2.16]{GKZ2} and~\cite[Definition~3.1]{BGZ}.
It follows from Brooks~\cite{B}, Epstein~\cite{Ep} and Kneser~\cite{K}
that the number $\iNR[\bar f]=\iNC[\bar f,c]$ of
{\it essential} Nielsen classes of roots (see Nielsen~\cite{Ni},
or~\cite[Definition~3.6]{BGZ}) equals
\begin{equation}
\label{NR}
\iNR[\bar f] =
\begin{cases}
[\pi_1(\wwbar\Mtwo):\bar f_\#(\pi_1(\wwbar\Mone))] & \mbox{if } A(\bar f)>0, \\
0 & \mbox{if } A(\bar f)=0,
 \end{cases}
\end{equation}
where $A(\bar f)$ denotes the absolute degree of $\bar f$.
The map $\bar f$ has the
{\it Wecken property for the root problem} if the general inequality
\begin{equation}
\label{eq:NR<MR}
\iNR[\bar f] \le \iMR[\bar f]
\end{equation}
is an equality. The root problem for closed surfaces was completely solved
in~\cite{BGKZ,BGZ0,GKZ1}, including the study of the Wecken
property.

Based on the Kneser congruence and the Kneser inequality,
see~\cite{K} or~\cite[Theorem~4.20]{KWZ},
and the geometric meaning of the equation~\eqref{eq}, see
\fullref{lem:geom}, one obtains the following propositions.

\begin{Pro}[{{\cite[Proposition~5.8]{GKZ1} or \cite[Proposition~5.12]{KWZ}}}]
\label{pro:roots}
Suppose that equation~\eqref{eq} admits a solution $(z_1,\dots,z_q)$,
and let $\bar f\co \wwbar\Mone\to\wwbar\Mtwo$ be the corresponding map
between closed surfaces admitting $\ell$ roots of multiplicities
$\sgn_\varepsilon(\w_1)c_1,\ldots,\sgn_\varepsilon(\w_\ell)c_\ell$.
Let $A: = \sgn_\varepsilon(\w_1)c_1+\ldots+\sgn_\varepsilon(\w_\ell)c_\ell$.
If $A(\bar f)>0$ then $A(\bar f) \cdot r \equiv q \mod 2$.
If the solution is faithful then $A(\bar f) = |A|$.  \qed
\end{Pro}

Let, for an element
$u \in \pi_1(\Mtwo) = F_r = \langle a_1,\dots,a_r\,|\,\rangle$, the element
 $$
\bar u \in \pi_1(\wwbar\Mtwo) = F_r / \llangle  R_\varepsilon(a_1,\dots,a_r) \rrangle  =
\langle a_1,\dots,a_r\,|\,R_\varepsilon(a_1,\dots,a_r)\rangle
 $$
denote its image under the natural projection $\pi_1(\Mtwo) \to \pi_1(\wwbar\Mtwo)$.
Denote by $H \subset \pi_1(\wwbar\Mtwo)$ the subgroup generated by the
elements $\bar z_1,\dots,\bar z_q$.
Denote by ${\rm rank}\,H$ the
 the minimal cardinality of a set of generators for $H$ 
\cite[Section~II.2]{L1}.
 If $H$ is a free group, or a free abelian group, this agrees with the
 usual definition of rank.

 \begin{Pro} \label{pro:roots:A>0}
Let, under the hypothesis of \fullref{pro:roots}, $A(\bar f)>0$. Then:

{\rm (A)}\qua
$A(\bar f) \cdot (r-2) \le q - 2$ and
$A(\bar f) \cdot (r-1) \le q - 2 + \iMR[\bar f] \le q - 2 + \ell$.
In particular, if
$\iMR[\bar f] = \ell = |\bar f^{-1}(c)|$, then $\bar f$ is a solution of the
root problem for $\bar f$.

{\rm (B)}\qua If $A(\bar f) \cdot (r-2) = q - 2$ then the
solution is faithful, $\bar f$ is homotopic to an $|A|$--fold covering and
$\iMR[\bar f] = \iNR[\bar f] = A(\bar f) = |A|$, thus $\bar f$ has the Wecken
property for the root problem.

{\rm (C)}\qua
$\iNR[\bar f] = [\pi_1(\M):H] \le \min \{ \ell,A(\bar f) \}$.
Furthermore, consider the subdivision of $\{1,\dots,\ell\}$ into
$\ell_H=[\pi_1(\M):H]$ subsets where $i,j$ belong to the same
subset iff $\bar\w_i\bar\w_j^{\;-1}\in H$
(that is, $\bar\w_i$ and $\bar\w_j$ belong to the same Reidemeister root class);
then each of these subsets is non-empty.
If the solution is faithful then $\sgn_\delta(\ker \bar f_\#) = \{1\}$, and
the sum of $\sgn_\varepsilon(\w_j)c_j$ over
all $j$ belonging to the same subset equals $\smash{\frac{A}{\ell_H}}$.
If the solution is non-faithful then each of these sums is odd,
$\sgn_\delta(\ker \bar f_\#) = \{1,-1\}$, and
$A(\bar f) = \ell_H = \iNR[\bar f] = \iMR[\bar f]$, thus $\bar f$ has the
Wecken property for the root problem.
\end{Pro}

\begin{proof}
(A)\qua Since $A(\bar f)>0$, it follows from the Kneser
inequality~\cite{K} that
$\chi(\wwbar\Mone) \le A(\bar f) \cdot \chi(\wwbar\Mtwo)$.
Since $\chi(\wwbar\Mone)=2-q$, $\chi(\wwbar\Mtwo)=2-r$, this gives the first
inequality. Since the map $\bar f$
has $\ell$ roots, we have $\iMR[\bar f] \le \ell$.
Applying the Kneser inequality to a suitable proper map
$g\co \Mone'\to\Mtwo$
corresponding to a map $\bar g\co \wwbar\Mone\to\wwbar\Mtwo$, which is
homotopic to $\bar f$ and has $\iMR[\bar f]$ roots, one gets the inequality
$$\chi(\wwbar\Mone)- \iMR[\bar f] \le G(g) \cdot (\chi(\wwbar\Mtwo) -
1),$$
where $G(g)$ denotes the geometric degree of $g$, see
\fullref{def:abs:degree}(b) and~\cite[Theorem~4.1]{Sk} (see
also~\cite[Theorem~2.5(A)]{GKZ1}, in the case when $\bar f$ is
orientation-true). On the other hand, $G(g)\ge G(\bar g)=A(\bar g)=A(\bar f)$,
due to \fullref{pro:1.c}(c). This proves~(A).

(B)\qua Since $A(\bar f)>0$, and the Kneser inequality~\cite{K}
 $$
\chi(\wwbar\Mone) \le A(\bar f)\cdot \chi(\wwbar\Mtwo)
 $$
becomes an equality, it follows from~\cite{K} that the map $\bar f$ is
homotopic to an $A(\bar f)$--fold covering (this also follows from the
classification of maps of positive absolute degree,
see~\cite[Theorem~1.1]{Sk}). Therefore $\bar f$ is orientation-true, and
$$\iMR[\bar f]\le[\pi_1(\wwbar\Mtwo):\bar f_\#(\pi_1(\wwbar\Mone))]=A(\bar
f).$$
By
\fullref{lem:geom}, the solution is faithful. Hence, by
\fullref{pro:roots}, $A(\bar f)=|A|$. Together
with~\eqref{NR}, \eqref{eq:NR<MR}, this proves the assertion.

(C)\qua In the case of faithful solutions, this assertion follows
from~\cite[Lemma~5.7]{GKZ1} or~\cite[Lemma~5.18(b)]{KWZ}.
If the solution is non-faithful then the map $\bar f$ is not orientation-true
with $A(\bar f)>0$. Therefore $\bar f$ has Type~III (see
\fullref{def:types}(b))
or, equivalently, $\sgn_\delta(\ker \bar f_\#) = \{1,-1\}$.
It follows from~\cite[Proposition~4.19]{KWZ} that every sum under
consideration is odd. Since the map $\bar f$ is not orientation-true,
it has the Wecken property for the root problem, due to Kneser~\cite{K1928,K}
and~\eqref{NR}, see also~\cite{Ep} or~\cite{GKZ1}. Indeed,
Kneser~\cite{K1928,K} proved that
such $\bar f$ can be deformed to a map having 0 or $\ell_H$ roots
depending on whether $A(\bar f)=0$ or $A(\bar f)>0$, and by~\eqref{NR}
the latter number coincides with $\iNR[\bar f]$.
Therefore $A(\bar f) = \ell_H = \iNR[\bar f] = \iMR[\bar f]$.
\end{proof}

By applying a suitable automorphism of the free group $F_q$, one obtains the
following presentation of the fundamental group of the closed surface
$\wwbar\Mone$,
in addition to~\eqref{eq:pi1closed}, see Lyndon and
Schupp~\cite[Chapter~I, Proposition~7.6]{LS}:
$$
\pi_1(\wwbar\Mone,\y)=\bigl\langle
\xi_1,\dots,\xi_{[\frac{q+1}{2}]}, \eta_1,\dots,\eta_{[\frac{q}{2}]}
\ \big|\
\Q_\delta(\xi_1,\dots,\xi_{[\frac{q+1}{2}]}, \eta_1,\dots,\eta_{[\frac{q}{2}]})
\bigr\rangle,
$$
where
$$
\Q_\delta\bigl(\xi_1,\dots,\xi_{[\frac{q+1}{2}]},
\eta_1,\dots,\eta_{[\frac{q}{2}]}\bigr)
= \begin{cases}
\prod_{i=1}^{\frac q2}[\xi_i,\eta_i], & \delta=1, \\
\Bigl( \prod_{i=1}^{\frac q2-1}[\xi_i,\eta_i] \Bigr) \cdot
[\xi_{\frac q2},\eta_{\frac q2}]_-,
       & \delta=-1, q \text{ even}, \\
\Bigl( \prod_{i=1}^{\frac {q-1}2}[\xi_i,\eta_i] \Bigr) \cdot
\xi_{\frac {q+1}2}^2,
       & \delta=-1, q \text{ odd}. \end{cases}
$$
Here we use the notation
$$[x,y]\,=\,xyx^{-1}y^{-1}, \qquad [x,y]_-\,=\,xyxy^{-1}.$$
By applying the corresponding change of the unknowns, the equation~\eqref{eq}
in $F_r=\langle a_1,\dots,a_r\mid\rangle$ rewrites in the following equivalent
form:
\begin{equation}
\label{eqNeven}
\Q_\delta(x_1,\dots,x_{[\frac{q+1}{2}]}, y_1,\dots,y_{[\frac{q}{2}]})
=
\prod_{j=1}^\ell \w_j \cdot(R_{\varepsilon}(a_1,\dots,a_r))^{c_j}\cdot \w_j^{-1},
\end{equation}
with the new unknowns
$x_1,\dots,x_{[\frac{q+1}{2}]}, y_1,\dots,y_{[\frac{q}{2}]}\in F_r$.
Similarly to \fullref{def:5.1}(C), a solution of the
equation~\eqref{eqNeven} is called {\it faithful} if
$$
w_\varepsilon(x_i)= \left\{ \begin{array}{rl} 1, & 1\le i\le [\frac q2], \\
                     -1, & i=\frac {q+1}2,\ \delta=-1,\ \mbox{$q$ odd},
 \end{array} \right. \quad
w_\varepsilon(y_i)= \left\{ \begin{array}{rl} 1, & 1\le i\le [\frac {q-1}2], \\
                                 \delta, & i=\frac q2,\ \mbox{$q$ even}.
 \end{array} \right.
$$
Otherwise the solution is called
{\it non-faithful}. Actually, a solution of~\eqref{eq} is faithful if and only
if the corresponding solution of~\eqref{eqNeven} is faithful.

Suppose that, for a solution of~\eqref{eqNeven}, all
$x_i\in N=\llangle R_{\varepsilon}(a_1,\dots,a_r)\rrangle $, $1\le i\le [\frac {q+1}2]$. (One
easily shows that, in this case, both sides of the equation belong to $N$.)
If one restricts oneself only to such solutions of~\eqref{eqNeven},
the obtained equation will be refered to as the {\it equation~\eqref{eqNeven}
in the subgroup $N$} of $F_r$. One checks that, for odd $q$, all solutions
of~\eqref{eqNeven} in $N$ are non-faithful, while, for even $q$, a solution is
faithful if and only if $w_\varepsilon(y_i)=1$, $1\le i\le \frac q2-1$, and
$w_{\varepsilon}\bigl(\smash{y_{\frac q2}}\bigr)=\delta$.

For every solution of~\eqref{eqNeven} in $F_r$, consider the corresponding
homomorphism
 $$
h\co F_q=\bigl\langle
\xi_1,\dots,\xi_{[\frac{q+1}{2}]}, \eta_1,\dots,\eta_{[\frac{q}{2}]}
~\big|~ \bigr\rangle
\to F_r=\bigl\langle a_1,\dots,a_r ~\big|~ \bigr\rangle
 $$
sending $\xi_i\mapsto x_i$, $\eta_i\mapsto y_i$. In particular, the subgroup
$H$ is the image of the composition
$$F_q \stackrel{h}{\longrightarrow} F_r
     \stackrel{p_{r,\varepsilon}}{\longrightarrow}
     F_r/\llangle R_\varepsilon(a_1,\dots,a_r)\rrangle ,$$
where $p_{r,\varepsilon}$ is the projection, see~\eqref{eq:pi1closed}. It
follows from \fullref{lem:geom} that $h=j_{v_1}f_\#$, where $j_u$ is the
conjugation by the element $u$ in $F_r$, that is $j_u(v)=uvu^{-1}$, $u,v\in F_r$.

\begin{Pro} \label{pro:roots:A=0}
Suppose that, under the hypothesis of \fullref{pro:roots},
$A(\bar f) = 0$.
Denote by $(x_1,\dots,x_{[\frac{q+1}{2}]}, y_1,\dots,y_{[\frac{q}{2}]})$ the
corresponding solution of the equation~\eqref{eqNeven} in $F_r$, let
$h\co F_q\to F_r$ be the corresponding homomorphism, and $\rho := \rank\,H$.
Then:

{\rm (A)}\qua $\rho \le [\frac{q}{2}]$, moreover there exists an automorphism
$\varphi$ of the free group $F_q$ such that the word
$\Q_\delta(\xi_1,\dots,\xi_{[\frac{q+1}{2}]},
\eta_1,\dots,\eta_{[\frac{q}{2}]})\in F_q$
is preserved under $\varphi$, and 
$$h\varphi(\xi_i)\in N
=\bigllangle R_{\varepsilon}(a_1,\dots,a_r)
\bigrrangle $$
for all $1\le i\le[\frac{q+1}{2}]$. In other words,
for the solution
$\bigl(x'_1,\dots,x'_{[\frac{q+1}{2}]}, y'_1,\dots,y'_{[\frac{q}{2}]}\bigr)$
of the equation~\eqref{eqNeven}, which corresponds to the homomorphism
$h'=h\varphi\co F_q\to F_r$, one has $x'_i\in N$ for all
$1\le i\le[\frac{q+1}{2}]$. The solutions
$\smash{\bigl(x_1,\dots,x_{[\frac{q+1}{2}]},
y_1,\dots,y_{[\frac{q}{2}]}\bigr)}$ and
$\smash{\bigl(x'_1,\dots,x'_{[\frac{q+1}{2}]},
y'_1,\dots,y'_{[\frac{q}{2}]}\bigr)}$
of~\eqref{eqNeven} are both faithful or both non-faithful.

{\rm (B)}\qua
$\iMR[\bar f] = \iNR[\bar f] = 0$;
in particular, $\bar f$ has the Wecken property for the root problem.

{\rm (C)}\qua
Consider the subdivision of $\{1,\dots,\ell\}$ into
subsets where $i,j$ belong to the same subset iff
$\bar\w_i\smash{\bar\w_j^{\;-1}}\in H$
(that is, $\bar\w_i$ and $\bar\w_j$ belong to the same Reidemeister root
class). If $\sgn_\delta(\ker \bar f_\#) = \{+1\}$ then,
for each $i$ with $1\le i\le \ell$, the sum of
$\sgn_\delta({\bar f_\#}^{-1}({\bar\w_i}\bar\w_j^{\;-1})) c_j$ over all
$j$ belonging to the subset containing $i$ vanishes. Otherwise (that is, if
$\sgn_\delta(\ker \bar f_\#) = \{1,-1\}$) each of these sums is even.
 \end{Pro}

\begin{proof}
(A), (B)\qua Since $A(\bar f)=0$, it follows
from Kneser~\cite{K} that the map $\bar f$ is homotopic to a map which
is not surjective (see also Epstein~\cite{Ep}), thus $\iMR[\bar f] = \iNR[\bar f] = 0$.
We also obtain that $\bar f$ is homotopic to a map whose image lies in the
1--skeleton of the target $\wwbar\Mtwo$, and therefore $\bar f_\#$ admits a
composition $\pi_1(\wwbar\Mone,P_1)\to F\to \pi_1(\wwbar\Mtwo,P_2)$ where the
first homomorphism $g\co \pi_1(\wwbar\Mone,P_1)\to F$ is an epimorphism to a
free group $F$.
It follows that $\rank\,F\le [\frac q2]$, see
Zieschang~\cite{Z1964,Z1965}, and Zieschang, Vogt and Coldewey\cite{ZVC} in
the case of orientable $\Mone$, and from Ol'shanski\u{\i}~\cite{Ol'shanski}
in the general case (see also Lyndon and
Schupp~\cite[Proposition~7.13]{LS}, or~\cite[Corollary~2.4]{KWZ}).
Therefore $\rho=\rank\,H\le \rank\,F\le \bigl[\frac q2\bigr]$.

In the case of orientable $\Mone$, it has been proved in~\cite{Z1965}
using the Nielsen method (see also~\cite{ZVC}
or Grigorchuk, Kurchanov and Zieschang~\cite[Proposition~1.2]{GriKurZie})
that there exists a sequence of
``elementary moves'' of the system of generators
$\xi_1,\dots,\xi_{\unfrac{q}{2}}$, $\eta_1,\dots,\eta_{\unfrac{q}{2}}$ of $F_q$,
and a corresponding sequence of ``elementary moves'' of the ``system of cuts''
on $\wwbar\Mone$ (see above), such that the resulting system of generators
$\xi'_1,\dots,\xi'_{\unfrac{q}{2}}, \eta'_1,\dots,\eta'_{\unfrac{q}{2}}$
is also canonical (this means, there exists an automorphism $\varphi$ of $F_q$
such that
$\xi'_i=\varphi(\xi_i)$,
$\eta'_i=\varphi(\eta_i)$, and 
$$\Q_\delta\bigl(\xi'_1,\dots,\xi'_{\unfrac{q}{2}},
  \eta'_1,\dots,\eta'_{\unfrac{q}{2}}\bigr)
  =\Q_\delta\bigl(\xi_1,\dots,\xi_{\unfrac{q}{2}},
  \eta_1,\dots,\eta_{\unfrac{q}{2}}\bigr)$$
in
$F_q$), and $g(\bar \xi'_i)=1$ in $F$ for all $1\le i\le\frac{q}{2}$.
Here $\bar u\in \pi_1(\wwbar\Mone,\y)$ denotes the image of $u\in F_q$ under the
projection $F_q\to F_q /
\bigllangle  \Q_\delta(\xi_1,\dots,\xi_{\frac{q}{2}},
\eta_1,\dots,\eta_{\frac{q}{2}}) \bigrrangle 
=\pi_1(\wwbar\Mone,\y)$. In the general case (that is, when $\Mone$ is not necessarily
oreintable), the existence of an automorphism $\varphi$ of $F_q$ having the
analogous properties was proved by
Ol'shanski\u{\i}~\cite[Theorem~1]{Ol'shanski}.

Since $g(\bar \xi'_i)=1$ in $F$, it follows $\bar f_\#(\bar \xi'_i)=1$ in
$\pi_1(\wwbar\Mtwo,\x)$. Hence $f_\#(\xi'_i)\in N$. This gives
$h\varphi(\xi_i)=h(\xi'_i)=j_{v_1}f_\#(\xi'_i)\in N$.

Let us prove the latter assertion of~(A). Since the automorphism $\varphi$
preserves the quadratic word
$\Q_\delta\bigl(\xi_1,\dots,\smash{\xi_{[{\scriptscriptstyle\frac{q+1}{2}}]}},
\eta_1,\dots,\smash{\eta_{[{\scriptscriptstyle\frac{q}{2}}]}}\bigr)$,
it also ``preserves'' the orientation character
$w_\delta\co F_q\to \{1,-1\}$, that is, $w_\delta=w_\delta\varphi$,
see \fullref{def:5.1}(A), \fullref{rem:or:char},
and Lyndon and Schupp~\cite[Chapter~I, Proposition~7.6]{LS}.
Now observe that the solution
$\bigl(x_1,\dots,\smash{x_{[{\scriptscriptstyle\frac{q+1}{2}}]}},
y_1,\dots,\smash{y_{[{\scriptscriptstyle\frac{q}{2}}]}}\bigr)$ is faithful if
and only if $w_\delta=w_\varepsilon h$. Similarly, 
$\bigl(x'_1,\dots,\smash{x'_{[{\scriptscriptstyle\frac{q+1}{2}}]}},
y'_1,\dots,\smash{y'_{[{\scriptscriptstyle\frac{q}{2}}]}}\bigr)$ is faithful
if and only if $w_\delta=w_\varepsilon h'$.
By the above, the latter equality \vrule width 0pt height 12pt
is equivalent to $w_\delta\varphi=w_\varepsilon h\varphi$, and since $\varphi$
is an automorphism, it is equivalent to $w_\delta=w_\varepsilon h$.

(C)\qua In the case of faithful solutions, the assertion follows
from~\cite[Lemma~5.7]{GKZ1} or~\cite[Lemma~5.18(b)]{KWZ}.
If the solution is non-faithful then the map
$\bar f$ has Type~II if $\sgn_\delta(\ker \bar f_\#) = \{+1\}$, and it
has Type~III if $\sgn_\delta(\ker \bar f_\#) = \{1,-1\}$.
Since $A(\bar f)=0$, it follows from~\cite[Proposition~4.19]{KWZ} that
each sum under consideration vanishes
if $\bar f$ has Type~II, and it is even if $\bar f$ has Type III.
\end{proof}

 \begin{Cor} \label{cor:Wecken}
Under the hypothesis of \fullref{pro:roots},
the following properties hold:

{\rm (A)}\qua Suppose that $r=q=2$. If $A(\bar f) > 0$ then $\rank\,H=2$ and the
solution is faithful.
If $A(\bar f) = 0$ then $\rank\,H \le 1$, moreover
either $\delta=-1$ and $\xalpha\in N=\llangle R_\varepsilon(a_1,a_2)\rrangle $, or $\delta=1$
and $\xalpha'\in N$, for
some solution $(\xalpha',\ybeta')$ which is faithful (resp.\ non-faithful) if
$(\xalpha,\ybeta)$ is faithful (resp.\ non-faithful). If $\xalpha\in N$ then
$A(\bar f)=0$ and the following implications hold:
\begin{align}
\label {eq:faithful}
(x,y)\mbox{ is faithful} &~\iff~
\sgn_\varepsilon(\ybeta)=\delta, \\
\label {eq:rank1'}
\ell=2, c_2\mbox{ odd}  &~\implies~ \exists k\in\ZZ, &
c_1 &=
\begin{cases}
  -c_2\delta^k & \mbox{if $\bar y\ne1$ or $\delta=1$},\\
  \mbox{odd} & \mbox{otherwise},
\end{cases}\\
\notag
 & &
\bar v_1\bar v_2^{\;-1} &= \bar \ybeta^k.
\end{align}
Here $(x,y)=(x_1,y_1)$ is the solution of~\eqref{eqNeven} with $r=q=2$
corresponding to the solution $(z_1,z_2)$ of~\eqref{eqF2} via
the standard transformation of unknowns, see~\eqref{eq:change}.

{\rm (B)}\qua Suppose that either the solution $(z_1,\dots,z_q)$ is non-faithful,
or $A = 0$, or $|A|\cdot(r-2)=q-2$ (in particular, $r=q=2$). Then the map
$\bar f\co \wwbar\Mone \to \wwbar\Mtwo$ has the Wecken property for the root
problem: $\iMR[\bar f] = \iNR[\bar f]=|A(\bar f)|$.

{\rm (C)}\qua Suppose the solution is faithful and $A \ne 0$. Then
$|A| \cdot (r-1) \le q - 2 + \ell$, furthermore:

If $|A| \cdot (r-1) = q - 2 + \ell$, $\ell\ge2$ and
$\sgn_\varepsilon(\w_i)c_i \ne \sgn_\varepsilon(\w_j)c_j$ for some pair of
indices $1\le i,j\le \ell$, then $\iNR[\bar f] < \iMR[\bar f] = \ell$ and, thus,
$\bar f$ does not have the Wecken property for the root problem.
If $\ell' < |A| \cdot (r-1)-q+2$ then
$\iNR[\bar f] \le \ell' < |A| \cdot (r-1)-q+2 \le \iMR[\bar f]$,
where $\ell'$ is the maximal number of disjoint subsets of $\{ 1,\dots,\ell \}$
such that the union of the subsets is $\{ 1,\dots,\ell \}$ and the sum of
$\sgn_\varepsilon(\w_j)c_j$ over all $j$ belonging to the same subset does not
depend on the subset and, hence, equals $A/\ell'$.
 \end{Cor}

\begin{proof}
(A)\qua Let $r=q=2$. Suppose that $A(\bar f)>0$. By
\fullref{pro:roots:A>0}(B), the solution is faithful and $\bar f$
is homotopic to a covering. Therefore $\ell_H=A(\bar f)$ and
$\bar f_\# \co  \pi_1(\wwbar\Mone)\to \pi_1(\wwbar\Mtwo)$ is a monomorphism,
hence $\rank\,H=\rank\,\pi_1(\wwbar\Mone)=2$.

Suppose that $A(\bar f)=0$. By \fullref{pro:roots:A=0}(A),
$\rho=\rank\,H \le [\frac q2]=1$ and there exists an automorphism
$\varphi\in\Aut(F_2)$ such that the relator
$\xi\eta\xi^{-\delta}\eta^{-1}\in F_2$ is preserved by $\varphi$, and
$x':=h\varphi(\xi)\in N$; moreover the corresponding solutions $(x,y)$ and
$(x',y')$ are both faithful or both non-faithful.
Thus $\varphi$ is the desired automorphism when $\delta=1$. In the case
$\delta=-1$, it is well known that the cyclic subgroup $\langle \bar\xi\rangle$
of $\langle \xi,\eta\mid\xi\eta\xi\eta^{-1}\rangle$ (the fundamental group of
the Klein bottle) generated by $\bar\xi$ is characteristic, hence
$\varphi(\xi)=\xi^{\pm1}\xi_1$ for some $\xi_1\in\llangle \xi\eta\xi\eta^{-1}\rrangle $.
Since $h\varphi(\xi)\in N$ and $h(\xi_1)\in N$ (since
$h(\xi\eta\xi\eta^{-1})=xyxy^{-1}$ equals the right-hand side of the equation,
thus it belongs to $N$), it follows that $x=h(\xi)\in N$.

Suppose that $x\in N$. Then $\bar x=1$, thus $H=\langle \bar y\rangle$ and
$\rank\,H\le 1$. It follows from the above that $A(\bar f)=0$.
The property~\eqref{eq:faithful} follows by observing that the solution
$(x,y)=(h(\xi),h(\eta))$ is faithful if and only if
$\sgn_\varepsilon(h(\zeta))=\sgn_\delta(\zeta)$ for any
$\zeta\in F_2=\langle \xi,\eta\mid\rangle$ or, equivalently, for any
$\zeta\in\{\xi,\eta\}$. For $\zeta=\xi$, this equality holds, since
$h(\xi)=x\in N$ and $\sgn_\delta(\xi)=1$. For $\zeta=\eta$, the equality is
equivalent to $\sgn_\varepsilon(y)=\delta$.

Let us prove~\eqref{eq:rank1'}.
Since $\ell=2$ and $c_2$ is odd, it follows from
\fullref{pro:roots:A=0}(C) that $\bar v_1\bar v_2^{\;-1}$ belongs to
the subgroup $H=\langle\bar y\rangle$ and that $c_1+c_2$ is even.
Hence $\bar v_1\bar v_2^{\;-1}=\bar y^k$, for some $k\in\ZZ$, and~\eqref{eq:rank1'} is proved
when $\bar y=1$, $\delta=-1$.
Let us assume that $\bar y\ne1$ or $\delta=1$. If
$\bar y\ne1$ then the kernel of the induced homomorphism
 $$
\bar f_\# \co  \pi_1(\wwbar\Mone)
= \langle \xi,\eta \mid \xi\eta\xi^{-\delta}\eta^{-1}\rangle
\to \pi=\pi_1(\wwbar\Mtwo) = \langle \aa,\bb \mid B \rangle, \quad
\bar \xi\mapsto \bar\xalpha, \ \bar \eta\mapsto \bar\ybeta,
 $$
is generated by $\bar \xi$.
Since $\sgn_\delta(\bar\xi)=1$, we have $\sgn_\delta(\ker \bar f_\#) = \{+1\}$.
If $\delta=1$, the equality $\sgn_\delta(\ker \bar f_\#) = \{+1\}$ is obvious.
Since $\bar v_1\bar v_2^{\;-1}\in \langle \bar\ybeta \rangle=H$, it follows from
\fullref{pro:roots:A=0}(C) that
$w_\delta(\bar f_\#^{-1}(\bar v_1\bar v_2^{\;-1}))c_1 + c_2=0$.
On the other hand, we have $\bar v_1\bar v_2^{\;-1}=\bar\ybeta^k$, thus
 $$
w_\delta(\bar f_\#^{-1}(\bar v_1\bar v_2^{\;-1}))
=w_\delta(\bar f_\#^{-1}(\bar\ybeta^k))=(w_\delta(\bar\eta))^k = \delta^k.
 $$
This proves the equality $c_1\delta^k+c_2=0$, and thereby completes the proof
of~\eqref{eq:rank1'}.

(B)\qua If the solution is non-faithful then $\bar f$ has the Wecken property for
the root problem, due to
Propositions~\ref{pro:roots:A>0}(C) and \ref{pro:roots:A=0}(B).
Suppose that the solution is faithful. Then, by \fullref{pro:roots},
$A(\bar f) = |A|$. If $A=0$ or $|A|\cdot(r-2)=q-2$ then
$\bar f$ has the Wecken property for the root problem, due to
Propositions~\ref{pro:roots:A=0}(B) and~ \ref{pro:roots:A>0}(B).

(C)\qua As above, $A(\bar f)=|A|$. It follows from
\fullref{pro:roots:A>0}(A),~(C) that
$|A| \cdot (r-1) \le q-2+ \iMR[\bar f] \le q-2+\ell$ and
$\iNR[\bar f] = \ell_H \le \ell'$. Hence,
$\iNR[\bar f] \le \ell' < \ell = \iMR[\bar f]$ in the first case, and
$\iNR[\bar f] \le \ell' < |A| \cdot (r-1)-q+2 \le \iMR[\bar f]$ in the second case.
\end{proof}

\begin{Rem} \label{rem:alternative}
Another way of proving the property~\eqref{eq:rank1'} is given below
in \fullref{thm:first}, using the corresponding first derived equation
(which is similar to~\eqref{first:eq}), rather than
\fullref{pro:roots:A=0}(C).  Both geometric and algebraic ways of
proving Proposition 3.8(C) are given in \cite[Proposition 4.19]{KWZ}.
\end{Rem}

\subsection[Applications to the quadratic equations (1)-(4)]{Applications to the quadratic equations \eqref{eqn1}--\eqref{eqn4}}
\label {subsec:appl}

Here we apply the results of \fullref{subsec:connroot} to study the existence of
faithful, or non-faithful, solutions $(z_1,z_2)$ of~\eqref{eqF2} satisfying
the condition~\eqref{eq:cond}.
For some values of $\bar v=p_\pi(v)\in \pi=F_2/\llangle R_\varepsilon(a_1,a_2)\rrangle $, we
give some explicit faithful and non-faithful solutions
in Tables~\ref{tbl1} and~\ref{tbl2}, respectively, in terms of the new variables, as given
in~\eqref{eq:change}.
The non-existence results stated in Tables~\ref{tbl1} and~\ref{tbl2} will be based on
the results of \fullref{subsec:connroot}.

\begin{Cor} \label {cor:cond}
A solution~$(z_1,z_2)$ of~\eqref{eqF2} satisfies the condition~\eqref{eq:cond}
if and only if the absolute degree $A(\bar f)$ of the corresponding map
$\bar f\co \wwbar\Mone\to\wwbar\Mtwo$ (see \fullref{subsec:connroot})
vanishes.
\end{Cor}

\begin{proof}
Suppose that the solution $(z_1,z_2)$ does not satisfy the
condition~\eqref{eq:cond}. Then the solution is faithful and
$\vartheta\ne -w_\varepsilon(v)$, thus $A=w_\varepsilon(v)\vartheta+1\ne 0$. By
\fullref{pro:roots},
this gives $A(\bar f)=|A|>0$.

Suppose that $A(\bar f)>0$. By \fullref{cor:Wecken}(A), the solution
is faithful. By \fullref{pro:roots},
this implies $A(\bar f)=|A|=|w_\varepsilon(v)\vartheta+1|$. Since the latter
expression is positive, we must have $\vartheta \ne -w_\varepsilon(v)$.
Therefore the solution does not satisfy the condition~\eqref{eq:cond}.
\end{proof}

As in~\eqref{eqN} and~\eqref{eq:change}, let us rewrite the
equations~\eqref{eqn1}--\eqref{eqn4} in terms of the new generators $\aa,\bb$ and
the unknowns $\xalpha,\ybeta$, as given in~\eqref{eq:change}.
Thus $R_+(a,b)=[a,b]=[\aa,\bb]$, $R_-(a,b)=a^2b^2=\aa\bb\aa\bb^{-1}$, and we
obtain the equation~\eqref{eqN}, which is written in detail as follows:
\begin{align}
\tag{1$'$} [\xalpha,\ybeta] &=
  \w [\aa,\bb]^\vartheta\w^{-1}\cdot [\aa,\bb],
  \label{eqn1'}\\
\tag{2$'$} [\xalpha,\ybeta] &=
  \w (\aa\bb\aa\bb^{-1})^\vartheta\w^{-1}\cdot \aa\bb\aa\bb^{-1},
  \label{eqn2'}\\
\tag{3$'$} \xalpha\ybeta\xalpha\ybeta^{-1} &=
  \w [\aa,\bb]^\vartheta\w^{-1}\cdot [\aa,\bb],
  \label{eqn3'}\\
\tag{4$'$} \xalpha\ybeta\xalpha\ybeta^{-1} &=
  \w (\aa\bb\aa\bb^{-1})^\vartheta\w^{-1}\cdot \aa\bb\aa\bb^{-1}.
  \label{eqn4'}
\end{align}
In the new generators, the fundamental group
$\pi=\pi_\varepsilon=\pi_1(\bar M_2)$ and the projection of
$F_2=\langle \aa,\bb\mid\rangle$ to it have the form
$$
p_\pi\co F_2\to \pi=F_2/N,
\qquad \mbox{where} \quad N=\llangle  B \rrangle , \ B=\aa\bb\aa^{-\varepsilon}\bb^{-1}.
$$
As above, denote
 $$
\bar u := p_\pi(u), \qquad u\in F_2.
 $$
Every element $\bar u\in\pi$ can be written in a unique way in the following
{\it canonical form}:
\begin{equation}
\label{eq:canon}
\bar u = \bar \aa^r\bar \bb^s, \quad r,s\in\ZZ.
\end{equation}

\begin{Rem} \label{rem:rank1}
Let us apply \fullref{cor:Wecken}(A) to study the existence of
(faithful, or non-faithful) solutions of the
equations~\eqref{eqn1'}--\eqref{eqn4'}
satisfying $A(\bar f)=0$. Suppose that $(x,y)$ is such a solution.
In the case of the equations~\eqref{eqn3'} and \eqref{eqn4'}, we have $\delta=-1$;
hence $x\in N$.
In the case of the equations~\eqref{eqn1'} and \eqref{eqn2'}, we have $\delta=1$; hence
there exists
a solution $(x',y')$ with $x'\in N$, where the solutions $(x,y)$ and
$(x',y')$ are both faithful or both non-faithful.
Thus we can restrict ourselves to study the existence of solutions $(x,y)$
of~\eqref{eqn1'}--\eqref{eqn4'} satisfying $x\in N$. Such solutions have the
properties~\eqref{eq:faithful} and \eqref{eq:rank1'}, where one substitutes
$v_1=v$, $v_2=1$, $c_1=\vartheta$, $c_2=1$. Thus the
property~\eqref{eq:rank1'} has the following form for the
equation~\eqref{eqN}, or~\eqref{eqn1'}--\eqref{eqn4'}:
\begin{equation}
\label{eq:rank1}
\bar v = \bar \ybeta^k, \quad \vartheta \delta^k=-1 ,
 \qquad \mbox{for some} \quad k\in\ZZ.
\end{equation}
\end{Rem}

\begin{Not} \label{not:canon}
If $F_2=\langle t_1,t_2 \mid \rangle$ is a free group on two
generators $t_1,t_2$,
let $|u|_{t_i}$ denote the sum of the exponents of $t_i$ which appear in a
word $u\in F_2$. In the case of $\pi=\pi_-$, denote by $p_K^\aa(u)$ and
$p_K^\bb(u)$, the exponents of $\bar\aa$, $\bar\bb$, respectively, which appear
in the canonical form~\eqref{eq:canon} of the element $\bar u$, thus
$p_K^\aa(u):=r$ and $p_K^\bb(u):=s$, see~\eqref{eq:canon}.
We also denote the projection $p_{\pi}\co F_2\to\pi$ by $p_T$, or $p_K$,
in the cases when $\pi$ is the fundamental group of the 2--torus $T$
($\varepsilon=1$), or the Klein bottle $K$ ($\varepsilon=-1$), respectively.
We will say that an element $\bar u$ of an abelian group is
{\it divisible by $2$} if there exists an element $\bar u_1$ of the group such
that $2\bar u_1=\bar u$.
(Here the additive notation for the group operation is used.)
\end{Not}

The following \fullref{thm:class} summarizes the above results about the
existence of faithful, or non-faithful, solutions satisfying~\eqref{eq:cond}
of the quadratic equation~\eqref{eqN}.
It can be regarded as the ``first classification'' of values of the
conjugation parameter $v$ with respect to the property that the corresponding
quadratic equation admits a (faithful, or non-faithful) solution. These
results are also summarized in Tables~\ref{tbl1} and~\ref{tbl2}, and in the explicit solutions
given in Tables~\ref{tbl3} and~\ref{tbl4}.
The cases which are not completely solved by \fullref{thm:class} are marked as
``mixed'' cases in Tables~\ref{tbl1} and~\ref{tbl2}.

\begin{Thm} \label{thm:class}
Let $v\in F_2=\langle \aa,\bb\mid\rangle$,
$\delta,\varepsilon,\vartheta\in\{1,-1\}$.
For the quadratic equation~\eqref{eqN}, the existence of a faithful
(resp.\ non-faithful) solution satisfying~\eqref{eq:cond} is
equivalent to the existence of a faithful (resp.\ non-faithful) solution
satisfying $\xalpha\in N=\llangle  \aa\bb\aa^{-\varepsilon}\bb^{-1} \rrangle $.
The following results on the existence of such faithful and non-faithful
solutions hold, see Tables~\ref{tbl1} and~\ref{tbl2}, respectively:
\begin{enumerate}
\item The equation \eqref{eqn1'} has a faithful solution for any $v\in F_2$ and
$\vartheta=-1$, see \fullref{tbl1}(1) for a solution, while it has no non-faithful
solution for any $v\in F_2$ and $\vartheta\in\{1,-1\}$. So, in
this case, the problem of the existence of solutions satisfying~\eqref{eq:cond}
is completely solved.

\item The equation \eqref{eqn2'} with $w_-(v)=-\vartheta$ admits a faithful
solution if and only if $\vartheta=-1$, see \fullref{tbl1}(2a) for a
solution. For non-faithful solutions of~\eqref{eqn2'}, we have: 

\begin{enumerate}
\item[(a)] If $\vartheta=1$, there is no solution.
\item[(b)] If $\vartheta=-1$ and $w_-(v)=-1$ then there is a solution, see
\fullref{tbl2}(2b) for a solution.

\item[(c)] If $\vartheta=-1$, $w_-(v)=1$ (thus $p_K^\bb(v)$ is even), and
$p_K^\aa(v)\ne 0$ then there is no solution.

\item[(d)] If $\vartheta=-1$, $w_-(v)=1$ (thus $p_K^\bb(v)$ is even), and
$p_K^\aa(v)=0$ then there is an element $v_1\in p_K^{-1}(p_K(v))$,
for which the equation admits a solution, see \fullref{tbl4}(2c,2d) for a
solution.
\end{enumerate}

\item The equation~\eqref{eqn3'} has no faithful solution, while
the following properties hold for its non-faithful solutions:

\begin{enumerate}
\item[(a)] If $\vartheta=1$, there is a solution, see \fullref{tbl2}(3a) for a
solution.

\item[(b)] If $\vartheta=-1$ and $p_T(v)$ is not divisible by 2, then there
is no solution.

\item[(c)] If $\vartheta=-1$ and $p_T(v)$ is divisible by 2, then there is
an element $v_1\in p_T^{-1}(p_T(v))$, for which the equation admits
a solution, see \fullref{tbl4}(3c) for a solution.
\end{enumerate}

\item The following properties hold for faithful solutions of the
equation~\eqref{eqn4'} with $w_-(v)=-\vartheta$:
\begin{enumerate}

\item[(a)] If $w_-(v)=-1$ (thus $p_K^\bb(v)$ is odd) then there is a
solution, see \fullref{tbl1}(4a) for a solution.

\item[(b)] If $w_-(v)=1$ (thus $p_K^\bb(v)$ is even) and $p_{K}^\aa(v)\ne
0$, then there is no solution.

\item[(c)] If $w_-(v)=1$ (thus $p_K^\bb(v)$ is even) and $p_{K}^\aa(v)=0$,
then there is an element $v_1\in p_K^{-1}(p_K(v))$, for which the
equation admits a solution, see \fullref{tbl3}(4d) for a solution.

For non-faithful solutions of~\eqref{eqn4'}, the following properties hold:

\item[(d)] If $w_-(v)=-1$ (thus $p_K^\bb(v)$ is odd) then there is no
solution.

\item[(e)] If $w_-(v)=1$ (thus $p_K^\bb(v)$ is even) and $\vartheta=1$, then
there is a solution, see \fullref{tbl2}(4b) for a solution.

\item[(f)] If $w_-(v)=1$ (thus $p_K^\bb(v)$ is even), $\vartheta=-1$, and
moreover $p_K^\bb(v)$ is not divisible by 4 or $p_K^\aa(v)$ is odd,
then there is no solution.

\item[(g)] If $w_-(v)=1$ (thus $p_K^\bb(v)$ is even), $\vartheta=-1$, $p_K^\bb(v)$ is divisible by 4, and
$p_K^\aa(v)$ is even, then there is an element $v_1\in
p_K^{-1}(p_K(v))$, for which the equation admits a solution, see
\fullref{tbl4}(4d) for a solution.
\end{enumerate}

\item In each of the ``mixed'' cases 2d, 3c, 4c, 4g above, any solution with $x\in N$  satisfies \eqref{eq:faithful} and \eqref{eq:rank1}, which imply
  $\bar v\in\langle\bar y^2\rangle$.
\end{enumerate}
 \end{Thm}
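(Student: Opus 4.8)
\emph{Structure of the argument.}
\fullref{thm:class} synthesizes the geometric material of \fullref{subsec:connroot} with the explicit solutions of Tables~\ref{tbl1}--\ref{tbl4}, so I would organize the proof in four steps: (i)~the opening equivalence reducing everything to solutions with $\xalpha\in N$; (ii)~the non-existence assertions of items~(1)--(4); (iii)~the existence assertions; (iv)~the computation behind item~(5). For step~(i): by \fullref{cor:cond} a solution of~\eqref{eqF2} satisfies~\eqref{eq:cond} precisely when $A(\bar f)=0$. If $A(\bar f)=0$, then \fullref{cor:Wecken}(A) with $r=q=2$ gives either $\delta=-1$, so $\xalpha\in N$ already, or $\delta=1$ together with a solution $(\xalpha',\ybeta')$ with $\xalpha'\in N$ of the same faithfulness type; conversely $\xalpha\in N$ forces $\bar\xalpha=1$, hence $A(\bar f)=0$ by \fullref{cor:Wecken}(A) and~\eqref{eq:cond} by \fullref{cor:cond}. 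This is the opening sentence. From now on fix a solution with $\xalpha\in N$; by \fullref{rem:rank1} it is faithful iff $w_\varepsilon(\ybeta)=\delta$, and $\bar v=\bar\ybeta^{\,k}$ with $\vartheta\delta^k=-1$ for some $k\in\ZZ$.

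\emph{Non-existence.} For~\eqref{eqn1'} one has $\varepsilon=+1$, so $w_+$ is trivial and no solution can be non-faithful, which is the non-faithful half of item~(1). For the remaining non-existence claims (2(a),(c); 3(b); 4(b),(d),(f)) I argue by contradiction: write $\bar\ybeta=\bar\aa^{\,r}\bar\bb^{\,s}$ in the canonical form~\eqref{eq:canon}, read off the parity of $s$ from the prescribed faithfulness type using $w_\varepsilon(\bb)=\varepsilon$, evaluate $\bar\ybeta^{\,k}$ in $\pi$ via its defining relation, and confront $\bar v=\bar\ybeta^{\,k}$ with the hypotheses on $p_K^\aa(v),p_K^\bb(v)$ (when $\varepsilon=-1$) or on divisibility of $p_T(v)$ by $2$ (when $\varepsilon=+1$); combined with $\vartheta\delta^k=-1$ this is contradictory in each listed case. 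For instance, in 4(f) one gets $k$ even and $s$ even, so $\bar\ybeta^{\,k}=\bar\aa^{\,kr}\bar\bb^{\,ks}$, whence $p_K^\aa(v)=kr$ is even and $p_K^\bb(v)=ks$ is a multiple of $4$, contrary to the hypothesis there.

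\emph{Existence.} These are dispatched by exhibiting solutions. For~\eqref{eqn1'} with $\vartheta=-1$ the right-hand side is the commutator $[v,[\aa,\bb]^{-1}]$, so a faithful solution exists for every $v$; the solutions for 2(a)--(b), 3(a), 4(a),(e) are those of~\cite{GKZ1,GKZ2} recorded in Tables~\ref{tbl1}--\ref{tbl2}, and one checks directly that each listed pair solves the relevant equation with $\xalpha\in N$. For the four ``mixed'' cases 2(d), 3(c), 4(c),(g) one invokes the constructions of Sections~\ref{sec:Quad3}--\ref{sec:quad4}: a suitable representative $v_1$ of $\bar v$ has its invariant $p_Q(V)$ meeting the criterion of Theorems~\ref{thm:partit34nf} and~\ref{thm:partit24f}, so the second derived equation---and hence, by those constructions, the equation~\eqref{eqN} itself---has a solution of the required type, as tabulated in Tables~\ref{tbl3}--\ref{tbl4}.

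\emph{Item~(5), and the main obstacle.} Let $(\xalpha,\ybeta)$ be a solution with $\xalpha\in N$ in one of the mixed cases. If $\bar\ybeta=1$ then $\bar v=\bar\ybeta^{\,k}=1\in\langle\bar\ybeta^{\,2}\rangle$, so assume $\bar\ybeta\ne1$; by~\eqref{eq:rank1}, $\bar v=\bar\ybeta^{\,k}$ with $\vartheta\delta^k=-1$. In all four cases $\vartheta=-1$ (hypothesised in 2(d), 3(c), 4(g); in 4(c) it follows from $w_-(v)=-\vartheta=1$). When $\delta=-1$, i.e.\ in 3(c), 4(c), 4(g), the identity $\vartheta\delta^k=-1$ reads $(-1)^{k+1}=-1$, forcing $k$ even. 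When $\delta=+1$, i.e.\ in 2(d), that identity carries no parity information; instead, since $w_-$ factors through $\pi$ (because $w_-(B)=1$), one has $w_-(v)=w_-(\bar\ybeta)^k=w_-(\ybeta)^k$, and the solution being non-faithful with $\delta=+1$ forces $w_-(\ybeta)=-1$, so $w_-(v)=(-1)^k$, and the hypothesis $w_-(v)=1$ of case 2(d) forces $k$ even. In every case $k=2m$, hence $\bar v=(\bar\ybeta^{\,2})^m\in\langle\bar\ybeta^{\,2}\rangle$, proving~(5). The genuinely delicate point is exactly case 2(d): there $\delta=+1$ deprives the algebraic relation $\vartheta\delta^k=-1$ of any content about $k$, so one is forced to feed in the orientation character $w_-$ together with the non-faithfulness of the solution; apart from this, the only non-routine labour in the whole theorem is the construction of solutions in the mixed cases, which is deferred to Sections~\ref{sec:Quad3}--\ref{sec:quad4}.
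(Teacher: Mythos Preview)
Your proof is largely correct and follows the paper's own approach closely: the opening equivalence via \fullref{cor:cond} and \fullref{cor:Wecken}(A), the non-existence arguments by confronting~\eqref{eq:rank1} and~\eqref{eq:faithful} with the canonical form~\eqref{eq:canon}, and the treatment of item~(5) all match the paper. Your handling of case~2(d) in item~(5)---where $\delta=+1$ kills the parity information in $\vartheta\delta^k=-1$ and one must instead use $w_-(v)=w_-(\bar\ybeta)^k$---is exactly the point the paper singles out.

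There is, however, one genuine gap. For the existence half of the mixed cases 2(d), 3(c), 4(c), 4(g) you write that ``the second derived equation---and hence, by those constructions, the equation~\eqref{eqN} itself---has a solution''. This implication is false: the derived equations of Sections~\ref{sec:Quad3}--\ref{sec:quad4} give only \emph{necessary} conditions for~\eqref{eqN} to have a solution, not sufficient ones. The paper states this explicitly at the start of~\fullref{sec:quad4} and illustrates it in \fullref{ex:Wicks}, where $v=B_\aa B_{\aa^{-1}}$ has $p_Q(V)=0$ (so both derived equations are solvable) yet~\eqref{eqn2'} has no non-faithful solution. The correct argument for these cases---and the one the paper uses---is simply to exhibit, for each $\bar v$ in the mixed range, a specific representative $v_1\in p_\pi^{-1}(\bar v)$ together with an explicit pair $(\xalpha,\ybeta)$, and to verify by direct computation in $F_2$ (using~\eqref{eq:evenK} where needed) that it solves~\eqref{eqN} with $\xalpha\in N$. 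Those pairs are the entries of \fullref{tbl3}(4d) and \fullref{tbl4}(2c,2d),(3c),(4d); you should point to them as direct verifications, not as consequences of the derived-equation machinery.
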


\begin{proof}
By \fullref{cor:cond} and \fullref{rem:rank1},
the existence of a solution satisfying~\eqref{eq:cond} is equivalent to the
existence of a solution satisfying $\xalpha\in N$, where the solutions are both
faithful or both non-faithful.
This proves the first desired assertion.

By direct calculations in the free group $F_2$, or in the abelianised $F_2$,
one readily obtains the following cases of Tables~\ref{tbl1}
and~\ref{tbl2}:
 $$
\begin{array}{l}
\mbox{\fullref{tbl1}, cases (1), (2a), (2b), (3), (4a), and} \\
\mbox{\fullref{tbl2}, cases (1), (2a), (2b), (3a), (4b).}    \end{array}
 $$
The corresponding arguments for each of these cases are given in the footnotes
to these cases in Tables~\ref{tbl1} and~\ref{tbl2}.

The following cases of Tables~\ref{tbl1} and~\ref{tbl2} are marked as ``mixed'' cases:
\begin{equation}
\label{eq:mixed}
\mbox{\fullref{tbl1}, case (4c) \quad and \quad \fullref{tbl2}, cases (2d), (3c), (4e).}
\end{equation}
In each of these cases, an explicit value of the conjugation parameter
$v_1\in p_\pi^{-1}(v)$, together with an explicit solution of the corresponding
quadratic equation, are given in \fullref{tbl3}, case~(4d), and
\fullref{tbl4},
cases~(2c,2d), (3c), (4d), respectively. In the first of these cases, the solution
was given in~\cite[Corollary~7.2]{GKZ1}.
Other three cases are justified by direct calculations in $F_2$ (actually in
$N$). In the latter case, one also uses the following relation which is a simple
consequence of the relation $\bar\aa \bar\bb \bar\aa \bar\bb^{-1}=1$,
in the 
fundamental group $\pi=\pi_-$ of the Klein bottle:
\begin{equation}
\label{eq:evenK}
(\bar \aa^r \bar \bb^{2s})^2 = \bar \aa^{2r}\bar \bb^{4s}, \quad r,s\in\ZZ.
\end{equation}
Let us prove (5) and the non-existence results stated in the remaining cases, namely:
\begin{center}
\fullref{tbl1}, case (4b) \quad and \quad
\fullref{tbl2}, cases (2c), (3b), (4a), (4c), (4d).
\end{center}
By \fullref{cor:cond} and \fullref{rem:rank1}, we may assume that
$x\in N=\llangle B\rrangle $, and \eqref{eq:faithful}, \eqref{eq:rank1} hold.
In particular, $\bar x=1$, $\bar v=\bar y^k$, for some $k\in\ZZ$.

Consider the cases~(4b,c) of \fullref{tbl1} and the cases (2c,d) of \fullref{tbl2}.
Since the solution is faithful (resp.\ non-faithful), we have
$w_-(\bar y)=-1$, see~\eqref{eq:faithful}.
We conclude that $\bar y^2$ is a power of $\bar\bb$, by applying the canonical
form~\eqref{eq:canon} of elements in $\pi=\pi_-$:
\begin{equation}
\label{eq:oddK}
(\bar \aa^r \bar \bb^{2s+1})^2 = \bar \bb^{4s+2}, \quad r,s\in\ZZ.
\end{equation}
Since $\bar v=\bar y^k$, $w_-(\bar y)=-1$, $w_-(\bar v)=1$, the
integer $k$ must be even. Therefore, $\bar v$ is a power of $\bar y^2$,
thus also a power of $\bar\bb$.

In the cases (4a), (4c) of \fullref{tbl2}, we have $w_-(\bar v)=-1$ and
$w_-(\bar \ybeta)=1$, since the solution is non-faithful,
see~\eqref{eq:faithful}. This contradicts to $\bar v=\bar \ybeta^k$.

In the cases (3b,c), (4d,e) of \fullref{tbl2}, we have
$\delta=\vartheta=-1$. It follows from the second part
of~\eqref{eq:rank1} that $(-1)^k=1$, thus $k$ is even. This proves (5)
and finishes the proof in the case~(3b) of \fullref{tbl2}.  In the
case~(4d) of \fullref{tbl2}, we have $w_-(\bar\ybeta)=1$, since the
solution is non-faithful, see~\eqref{eq:faithful}.  Together with the
relation~\eqref{eq:evenK}, this shows that the canonical form of $\bar
v=\bar y^k$ is $\bar \aa^{2m}\bar \bb^{4n}$, for some $m,n\in\ZZ$.
\end{proof}

\fullref{thm:class} gives many cases for the values of $\bar v\in\pi$
such that all elements $v_1\in p_{\pi}^{-1}(\bar v)$ simultaneously have
(or simultaneously do not have, respectively) the following property:
the corresponding equation~\eqref{eqN} has a solution
satisfying~\eqref{eq:cond}, where the cases of faithful and
non-faithful solutions are considered separately, see Tables~\ref{tbl1}
and~\ref{tbl2},
respectively. The remaining cases listed in~\eqref{eq:mixed}
are marked in Tables~\ref{tbl1} and~\ref{tbl2} as ``mixed'' cases because of the following.

\begin{table}[ht!]
\begin{minipage}{\linewidth}
\renewcommand{\thempfootnote}{\rm(\roman{mpfootnote})}
\begin{center}
\begin{footnotesize}
\centering
\begin{tabular}{||r|c|c|c|c|c|c||}
\hhline{|t:=======:t|}
Case& \quad$\delta$\quad & \quad$\varepsilon$\quad & \quad$\vartheta$\quad &
  \multicolumn{2}{c|}{conditions on $v$} &
  faithful solution $(\xalpha,\ybeta)$ \\
\hhline{|~|~|~|~|-|-|}
& & & & \quad$\sgn_\varepsilon(v)$\quad &  & \\
\hhline{|:=:=:=:=:=:=:=:|}
(1)\ \phantom{1}\ & + & + & $-$ &$+$\hyperlink{mpfn11}{$^{\rm(i)}$}&  &
                     $(vB^{-1}v^{-1},\ v^{-1})$\hyperlink{mpfn14}{$^{\rm(iv)}$} \\
\hhline{|:=:=:=:=:=:=:=:|}
(2)\ a\           & + &$-$& $-$ & $+$     &  &
        $(vB^{-1}v^{-1},\ v^{-1})$\hyperlink{mpfn14}{$^{\rm(iv)}$} \\
\hhline{||~|~|~|-|-|-|-||}
b\              &   &   & $+$ & $-$     &  &    $\emptyset$\hyperlink{mpfn12}{$^{\rm(ii)}$} \\
\hhline{|:=:=:=:=:=:=:=:|}
(3)\ \phantom{a}\ &$-$& + &     &$+$\hyperlink{mpfn11}{$^{\rm(i)}$}&  &
$\emptyset$\hyperlink{mpfn13}{$^{\rm(iii)}$} \\
\hhline{|:=:=:=:=:=:=:=:|}
(4)\ a\ &$-$&$-$ & $+$ & $-$     &  &
           $(B,\ B^{-1}v)$\hyperlink{mpfn14}{$^{\rm(iv)}$} \\
\hhline{||~|~|~|-|-|-|-||}
b\     & &  & $-$ & $+$      & \multicolumn{1}{c|}{$p^\aa_K(v)\ne 0$} &
$\emptyset$\hyperlink{mpfn15}{$^{\rm(v)}$} \\
\hhline{||~|~|~|~|~|-|-||}
c\     & &  &     &          & $p^\aa_K(v)=0$ & ``mixed'' case, see
\fullref{tbl3} 
\\
\hhline{|b:=======:b|}
\end{tabular}
\end{footnotesize}
\end{center}

\caption{Faithful solutions of
$\xalpha\ybeta\xalpha^{-\delta}\ybeta^{-1}=vB^\vartheta v^{-1}B$
with $B=\aa\bb\aa^{-\varepsilon}\bb^{-1}$, $w_\varepsilon(v) =
-\vartheta$}
\label{tbl1}

\end{minipage}
\end{table}

\begin{table}[ht!]
\centerline{\hspace*{1.3cm}
\begin{minipage}{\linewidth}
\renewcommand{\thempfootnote}{\rm(\roman{mpfootnote})}
\begin{center}
\begin{footnotesize}
\centering
\begin{tabular}{||r|c|c|c|c|c|c||}
\hhline{|t:=======:t|}
Case & $\delta$ & $\varepsilon$ & $\vartheta$
&\multicolumn{2}{c|}{conditions on $v$}
& non-faithful solution $(\xalpha,``\ybeta)$
\\
\hhline{||~|~|~|~|-|-|~||}
& & & & $\sgn_\varepsilon(v)$ & &
\\
\hhline{|:=:=:=:=:=:=:=:|}
(1) \phantom{a}
      &  +  &  +  &     &$+$\hyperlink{mpfn11}{$^{\rm(i)}$}& &
$\emptyset$\hyperlink{mpfn11}{$^{\rm(i)}$}
\\
\hhline{|:=:=:=:=:=:=:=:|}
(2) a &  $+$ & $-$ & $+$  & $\pm$   & & $\emptyset$\hyperlink{mpfn12}{$^{\rm(ii)}$}
\\
\hhline{||~|~|~|-|-|-|-||}
   b &     &     & $-$ & $-$     & &
             $(vB^{-1}v^{-1}, v^{-1})$\hyperlink{mpfn14}{$^{\rm(iv)}$}
\\
\hhline{||~|~|~|~|-|-|-||}
   c &     &     &     & +      & $p^\aa_K(v)\ne 0$ &
$\emptyset$\hyperlink{mpfn15}{$^{\rm(v)}$}
\\
\hhline{||~|~|~|~|~|-|-||}
   d &     &     &     &        & $p^\aa_K(v) = 0$ &
                                          mixed case, see
\fullref{tbl4}(2) \\
\hhline{|:=:=:=:=:=:=:=:|}
(3) a & $-$ &  +  & $+$ &$+$\hyperlink{mpfn11}{$^{\rm(i)}$}& &
                       $([\aa,\bb], [\bb,\aa]v)$\hyperlink{mpfn14}{$^{\rm(iv)}$}
\\
\hhline{||~|~|~|-|-|-|-||}
   b &     &     & $-$ &$+$\hyperlink{mpfn11}{$^{\rm(i)}$}& $2{\nmid} p_T(v)$ &
$\emptyset$\hyperlink{mpfn15}{$^{\rm(v)}$}
\\
\hhline{||~|~|~|~|~|-|-||}
   c &     &     &     &         & $2{\mid} p_T(v)$ &
                                           mixed case, see
\fullref{tbl4}(3)
\\
\hhline{|:=:=:=:=:=:=:=:|}
(4) a & $-$ & $-$ &  +   & $-$     & &
$\emptyset$\hyperlink{mpfn15}{$^{\rm(v)}$}
\\
\hhline{||~|~|~|~|-|-|-||}
   b &     &     &      &  +      & &
                 $(B, B^{-1}v)$\hyperlink{mpfn14}{$^{\rm(iv)}$}
\\
\hhline{||~|~|~|-|-|-|-||}
   c &      &     &  $-$   & $-$     & & $\emptyset$\hyperlink{mpfn15}{$^{\rm(v)}$}
\\
\hhline{||~|~|~|~|-|-|-||}
   d &      &      &       &  +      & $4{\nmid} p_K^\bb(v)$ or
                                     $2{\nmid} p_K^\aa(v)$ &
$\emptyset$\hyperlink{mpfn15}{$^{\rm(v)}$}
\\
\hhline{||~|~|~|~|~|-|-||}
   e &     &     &     &         & $4{\mid} p_K^\bb(v)$ and $2{\mid} p_K^\aa(v)$ &
                                       mixed case, see \fullref{tbl4}(4)
 \\
\hhline{|b:=======:b|}
\end{tabular}
\end{footnotesize}
\end{center}
\caption{Non-faithful solutions of
$\xalpha\ybeta\xalpha^{-\delta}\ybeta^{-1}=vB^\vartheta v^{-1}B$ with
$B=\aa\bb\aa^{-\varepsilon}\bb^{-1}$}
\label{tbl2}
\footnotetext[1]{\hypertarget{mpfn11}{Automatically} for $\varepsilon=+1$.}
\footnotetext[2]{\hypertarget{mpfn12}{The} right-hand side of the equation~\eqref{eqF2}
  is not in $[F_2,F_2]$.}
\footnotetext[3]{\hypertarget{mpfn13}{Automatically} for $\varepsilon=+1$, $\delta=-1$.}
\footnotetext[4]{\hypertarget{mpfn14}{Direct} calculation.}
\footnotetext[5]{\hypertarget{mpfn15}{Using}~\eqref{eq:faithful}, and
  either~\eqref{eq:rank1} or the first derived equation~\eqref{first:eq},
  see~\fullref{rem:alternative} and \fullref{thm:first}.}
\end{minipage}}
\end{table}

\begin {Def} \label{def:mixed}
A family of quadratic equations~\eqref{eqN},
with the conjugation parameter $v$ running through the set
$p_{\pi}^{-1}(\bar v_0)$, is called
{\it mixed (with respect to the property of the
existence of a faithful, respectively non-faithful, solution)} if there exist two
parameter values $v_1,v_2\in p_{\pi}^{-1}(\bar v_0)$
such that the equation with $v=v_1$ has a faithful (respectively, non-faithful)
solution,
while the equation with $v=v_2$ has no faithful (respectively, non-faithful)
solution.
\end{Def}

\subsection{Comments to Tables~\ref{tbl1} and~\ref{tbl2}}
\label{subsec:tables12}

As above, we rewrite the equation~\eqref{eqF2} in the equivalent
form~\eqref{eqN}, in terms of the new generators $\aa,\bb$ of $F_2$, and the
new unknowns $x,y$, using the transformation of variables~\eqref{eq:change}.
Thus the equations \eqref{eqn1}--\eqref{eqn4} are transformed to the
equations~\eqref{eqn1'}--\eqref{eqn4'},
see \fullref{subsec:appl}.
In Tables~\ref{tbl1} and~\ref{tbl2} above, we summarize the results of \fullref{thm:class}
on the existence of faithful and non-faithful solutions of the latter equations,
respectively.

\begin{Rem} \label{rem:tables}
The primary objective, for the remainder of this paper, is the
study of the
four cases~\eqref{eq:mixed} of Tables~\ref{tbl1} and~\ref{tbl2} (the ``mixed'' cases), which are not
completely solved by \fullref{thm:class}. These cases are described in
detail in~\fullref{sec:Quadtab}.
In Tables~\ref{tbl3} and~\ref{tbl4} below, we will show that the cases~\eqref{eq:mixed}
are indeed the ``mixed'' cases with respect to the property of the existence of
a solution, see \fullref{def:mixed}.
A complete description of all words $v_1,v_2\in p_{\pi}^{-1}(\bar v)$
as in \fullref{def:mixed}, in a mixed case,
does not seem to be an easy task.
\end{Rem}

\section{Some quotients of the normal closure of an element of a free
group} \label{sec:Quad2}

In this section, we denote by $F$ a free group of finite rank $\ge 2$,
$B\in F$, $N=\llangle B\rrangle $, and $\pi=F/N$. Thus, $N$ is the normal closure of the
element $B$, that is, the minimal normal subgroup of $F$ containing $B$, while
$\pi$ is a one-relator group. We will assume that the word $B$ is not a proper
power of any element of $F$ (although, in some of the assertions, the
hypothesis above
can be made weaker). In particular, all assertions of this section are valid if
$B$ is a strictly quadratic word in a set of free generators of $F$,
see Lyndon and Schupp~\cite[Section~I.7]{LS}.
For $F=F_r=\langle a_1,\dots,a_r\mid\rangle$, such words are automorphic images
of the words $R_+(a_1,\dots,a_r)$,
$R_-(a_1,\dots,a_r)$, $r\ge 2$, see \fullref{def:5.1}(A)
and~\cite[Chapter~I, Proposition~7.6]{LS}.

Consider the following normal subgroups of the group $N$:
 $$
N\supset[F,N]\supset N_1=[N,N],\ [F,[F,N]]\supset[F,[N,N]]\supset[N,[N,N]].
 $$
We will construct presentations of the
quotients $N/N_1$, $N_1/[N,N_1]$ and $N/[N,N_1]$ (see \fullref{subsec:Nab}),
$N_1/[F,N_1]$ and $N/[F,N_1]$ (see \fullref{subsec:NF}),
and $N/[F,N]$ and $[F,N]/[F,[F,N]]$ (see \fullref{subsec:Q}). It will
follow that the first, second, fourth, and sixth quotients are free
abelian groups, the third and fifth quotients are the middle groups
of extensions of free abelian groups, while the seventh one is
isomorphic to $\pi^{\ab}=\pi/[\pi,\pi]$, the abelianised group $\pi$.
If $N$ is the commutator subgroup $[F,F]$ then the latter quotient
comes from the lower central series of the free group $F$, see also
\fullref{rem:Hilton}.

As in~\fullref{sec:Quad15}, we will denote by $\bar u\in F$ the class of an
element $u\in F$ in $\pi$.

\subsection{The groups $N/N_1$, $N_1/[N,N_1]$, and $N/[N,N_1]$}
\label{subsec:Nab}

Let us consider the short exact sequence
$$1\to N_1 \to N \to N/N_1 \to 1.$$
Here, as above,
$N=\llangle B\rrangle $, $B\in F$, and $B$ is not a proper power of any element of $F$.
By the Nielsen--Schreier subgroup theorem~\cite{Schreier}, $N$ is a free group,
since it is a subgroup of a free group. Furthermore, it follows
from Lyndon~\cite[Section~7]{L1} that $N^{\ab}=N/N_1$, the abelianised
group $N$, is isomorphic to the free abelian group which has a basis in a
bijective correspondence with $\pi=F/N$, see~\cite[Introduction]{L1}.
These results are formulated in more detail as follows.

\begin{Pro}[Lyndon \cite{L1}] \label{pro:Nab}
Suppose that the relator $B\in F$ is not a proper power of any element of a
free group $F$. Consider the short exact sequence
\begin{equation}
\label{eq:shortFN}
1 \to N \to F \stackrel{p_\pi}{\longrightarrow} \pi\to 1,
\end{equation}
where $N=\llangle B\rrangle $, the minimal normal subgroup which contains the relator $B$,
while $\pi=F/N$, a group with a single defining relation.
Then the group
$N$ is free and admits a free basis (for example, a Schreier basis) of the form $B_u$,
$u\in W$, where
$W=s(\pi)\subset F$, and $s\co \pi\to F$ is a map with $p_\pi s=\id_\pi$.
Furthermore, $N^{\ab}$, the abelianised group $N$, is isomorphic to the
abelian group $(\ZZ[\pi],+)$ of the group ring $\ZZ[\pi]$.
Moreover, there exists a short exact sequence
\begin{equation}
\label{eq:shortNab}
1\to [N,N]\stackrel{i_N}{\longrightarrow} N \stackrel{q_N}{\longrightarrow}
(\ZZ[\pi],+) \to 0,
\end{equation}
where $i_N$ is the canonical inclusion, while $q_N$ is an epimorphism sending
\begin{equation}\label {eq:Nab}
q_N\co  N \to (\ZZ[\pi],+), \qquad
\prod_{i=1}^r B_{u_i}^{n_i}
\longmapsto
\sum_{i=1}^r n_i \bar u_i \in \ZZ[\pi],
\end{equation}
for any $u_i\in F$, $n_i\in\ZZ$, where $B_u=uBu^{-1}$, $\bar
u=p_\pi(u)$, $u\in F$.
\qed
\end{Pro}

A similar assertion, for any element $B\in \pi$, was proved by Cohen and
Lyndon~\cite{CL}.
In the case when the relator $B$ is a strictly quadratic word in the free
generators $a_1,\dots,a_r$ of the group $F=\langle a_1,\dots,a_r\mid\rangle$,
$r\ge 2$, for example $B=R_\varepsilon(a_1,\dots,a_r)$, see
\fullref{def:5.1}(A), an alternative proof of
\fullref{pro:Nab} can be obtained as follows.
The subgroup $N$ is a free group, as explained above.
In the case when $B$ is a strictly
quadratic word, a free Schreier basis of $N$ was explicitely constructed
by Zieschang~\cite{Z1966}, Zieschang, Vogt and Coldewey~\cite{ZVC}; see
also Kudryavtseva, Weidmann and Zieschang~\cite[Proposition~4.9]{KWZ}. This
immediately implies \fullref{pro:Nab}, see~\cite[Corollary~4.12]{KWZ}.

\begin {Pro} \label{pro:pontr}
Under the hypothesis of \fullref{pro:Nab}, consider
the central short exact sequence
$$1\to N_1/[N,N_1] \to N/[N,N_1]\to N/N_1 \to 1.$$
Then $N_1/[N,N_1]\approx H_2(N/N_1)\approx\ZZ[J]$, a free abelian group with
basis denoted by $e_{\theta}$ where $\theta$ runs over the set
$J=(\pi\times \pi\setminus\Delta)/\Sigma_2$, and
$\Sigma_2$ is the symmetric group in two symbols, which acts on
$\pi\times \pi\setminus\Delta$ by permutations of the coordinates.
A presentation of the group $N_1/[N,N_1]$ is obtained as follows: for each
$\theta \in (\pi\times \pi\setminus\Delta)/\Sigma_2$ choose a
pair $(\xi,\eta)\in \theta$, denote $\epsilon_{(\xi,\eta)}:=e_\theta$,
$\epsilon_{(\eta,\xi)}:=-e_\theta$, and denote by $J_1$ the set of such pairs
$(\xi,\eta)$, thus $J_1\subset\pi\times\pi\setminus\Delta$.
Then there exists a short exact sequence
\begin{equation}\label {eq:shortN1NN1}
1\to [N,N_1]\stackrel{i_{N_1}}{\longrightarrow} N_1 \stackrel{q_{N_1}}
{\longrightarrow} \ZZ[J] \to 0,
\end{equation}
where $i_{N_1}$ is the canonical inclusion, while $q_{N_1}$ is an epimorphism
sending
\begin{equation}\label {eq:N1NN1}
q_{N_1}\co  N_1 \to \ZZ[J], \qquad
\left[ \prod_{i=1}^r B_{u_i}^{n_i} , \prod_{j=1}^s B_{v_j}^{m_j} \right]
\longmapsto
\sum_{i=1}^r \sum_{j=1}^s n_i m_i \epsilon_{(\bar u_i,\bar v_j)} \in \ZZ[J],
\end{equation}
where $u_i,v_j\in F$, $1\le i\le r$, $1\le j\le s$, and $B_u=uBu^{-1}$, $u\in F$.
Furthermore, the group $N/[N,N_1]$ admits the following presentations:
\begin{multline}\label{eq:presentation}
N/[N,N_1] \approx \langle x_\xi,\ \xi\in\pi \mid
[x_\xi,[x_\eta,x_\zeta]],\ \xi,\eta,\zeta\in\pi\rangle \\
\approx \left\langle \begin{array}{l}
e_\theta,\ \theta\in(\pi\times \pi\setminus\Delta)/\Sigma_2, \\
x_\xi,\ \xi\in\pi \end{array} \left| \begin{array}{l}
[e_\theta,e_{\theta'}],\ [e_\theta,x_\xi],\ \theta,\theta'\in
(\pi\times \pi\setminus\Delta)/\Sigma_2,\ \xi\in\pi, \\
{} [x_\xi,x_\eta] e^{-1}_{\{\xi,\eta\}},\ (\xi,\eta)\in J_1
\end{array} \right. \!
\right \rangle\!,
\end{multline}
where $\{\xi,\eta\}\in (\pi\times \pi\setminus\Delta)/\Sigma_2$ denotes the
class of $(\xi,\eta)\in J_1$ in $(\pi\times \pi\setminus\Delta)/\Sigma_2$.
\end{Pro}

\begin{proof}
Recall that if $1\to H\to G\to Q\to 1$ is a short exact sequence then we
have a 5--term exact sequence
\begin{equation}\label{eq:Stallings}
H_2(G) \to H_2(Q)\to H/[G,H] \to H_1(G)\to H_1(Q)\to 0,
\end{equation}
due to Stallings~\cite[Theorem~2.1]{S}. Applying this to the short exact
sequence
$$1 \to N_1 \to N \to N/N_1 \to 1$$
we obtain that
$G=N$, $H=N_1$, $Q=N/N_1$, thus the first and third homomorphisms in the
5--term sequence are trivial.
It follows
that the second homomorphism $H_2(N/N_1)\to N_1/[N,N_1]$ is an isomorphism.
Since $Q\approx (\ZZ[\pi],+)$ is a free abelian group, it follows from
Brown~\cite[Theorem~V.6.4]{Br}
that $H_2(Q;\ZZ)\approx\Lambda^2(Q)$, the subgroup of
grade 2 of the graded ring $\Lambda(Q)$ (the exterior graded ring of the group
$Q$),
where $Q$ is at grade 0. This proves the desired presentation for the group
$N_1/[N,N_1]$.

To prove that~\eqref{eq:N1NN1} defines a homomorphism, let us first
show that there exists a unique homomorphism $q_{N_1}\co N_1\to\ZZ[J]$
satisfying~\eqref{eq:N1NN1}. Denote by $p_{N_1}\co N_1\to N_1/[N,N_1]$
the canonical projection. Consider the free basis $B_u$, $u\in W$, of
$N$ given by \fullref{pro:Nab}.  It follows from Magnus, Karrass and
Solitar~\cite[Theorem~5.12]{MKS} that the group $N_1/[N,N_1]$ is a
free abelian group, where the elements
$p_{N_1}([B_{s(u)},B_{s(v)}])\in N_1/[N,N_1]$, $(u,v)\in J_1$, form a
free abelian basis. Therefore the map sending
$p_{N_1}([B_{s(u)},B_{s(v)}])\mapsto e_{\{u,v\}}$, $(u,v)\in J_1$,
uniquely extends to a homomorphism $\varphi_{N_1}\co
N_1/[N,N_1]\to\ZZ[J]$. Since $\varphi_{N_1}$ sends the above basis of
$N_1/[N,N_1]$ to a basis of $\ZZ[J]$, it is an isomorphism. The
property~\eqref{eq:N1NN1} of the obtained projection
$q_{N_1}:=\varphi_{N_1}p_{N_1}$ follows from commutator calculus,
see~\cite[Theorem~5.3]{MKS}.

Now the presentation~\eqref{eq:presentation}
follows by observing that the natural epimorphism of $N/[N,N_1]$ to the group
in the right-hand side of~\eqref{eq:presentation} sending
$B_u[N,N_1]\mapsto x_{\bar u}$, $u\in W$, is well-defined. It has a trivial
kernel, because one can easily construct its inverse.
\end{proof}

\subsection{The groups $N_1/[F,N_1]$ and $N/[F,N_1]$} \label{subsec:NF}

Here we will obtain the main results of this section, which will be applied
in~\fullref{sec:Quad3} to study the existence of solutions of the
equations~\eqref{eqn1'}--\eqref{eqn4'} in the mixed cases, see \fullref{rem:tables}.

Let us first recall some other facts from Lyndon~\cite{L1} on the homology
of one-relator groups.

\begin{Lem}[Lyndon {{\cite[Theorem~2.1]{L1}}}] \label{lem:zeroho}
For any group $\pi$, the homology group $H_i(\pi,\ZZ[\pi])$ is trivial for
$i\ge 1$, and isomorphic to $\ZZ$ for $i=0$. Here the local
coefficients $\ZZ[\pi]$ is the $\ZZ[\pi]$--module corresponding to the action
of $\pi$ on $\ZZ[\pi]$, which is given by the right multiplication.
\qed
\end{Lem}

The result above is also true if $\pi$ acts on $\ZZ[\pi]$ by the left
multiplication.

\begin{Lem} \label{lem:properties}
Under the hypothesis of \fullref{pro:Nab},
$H_i(\pi)=0$, $i\ge 3$, while $H_2(\pi)$ is either $\ZZ$ or $0$.
The latter group is $0$ if and only if $B\not\in[F,F]$.
\end{Lem}

\begin{proof}
From~\cite[Corollaries~4.2 and~11.2]{L1} it follows that $H^i(\pi)=0$
for $i\ge3$, while $H^2(\pi)$ is a cyclic group, which is finite if and only if
$B\not\in[F,F]$. Using the universal coefficient theorem,
we ontain the desired assertion, see also Brown~\cite[Example~II.4.3]{Br}.
\end{proof}

Remark that we will not use in this paper that $H_i(\pi)=0$ for
$i\ge 4$.

Now denote $N_F=[F,N_1]$ where $N_1=[N,N]$.
Similarly to~\eqref{eq:Q}, denote by $Q$ the quotient of the abelian group
$\ZZ[\pi\setminus\{1\}]$ by the system of relations $g\sim-g^{-1}$,
$g\in\pi\setminus\{1\}$:
\begin{equation}\label{eq:Qgeneral}
Q=(\ZZ[\pi\setminus\{1\}])/\langle g+g^{-1}\,|\,g\in\pi\setminus\{1\}\rangle .
\end{equation}

\begin {Pro} \label{pro:QH2}
Under the hypothesis of \fullref{pro:Nab},
consider the central short exact sequence
$$1\to N_1/N_F \to N/N_F \to N^{\ab} \to 1.$$
Then $N_1/N_F\approx H_2(F/N_1)\approx Q\approx\ZZ[I]$,
for $I=(\pi\backslash \{1\})/\sim$, where the relation $\sim$ is given by
identifying $g$ with $g^{-1}$, for $g\in\pi\setminus\{1\}$.
Moreover, there exists a short exact sequence
\begin{equation}\label {eq:shortQisom}
1\to [F,N_1]\stackrel{i_{N_F}}{\longrightarrow} N_1 \stackrel{q_{N_F}}
{\longrightarrow} Q \to 0,
\end{equation}
where $i_{N_F}$ is the canonical inclusion, while $q_{N_F}$ is an epimorphism
sending
\begin{equation}\label {eq:Qisom}
q_{N_F}\co  N_1 \to
Q, \quad
\Biggl[ \prod_{i=1}^r B_{u_i}^{n_i}, \prod_{j=1}^s B_{v_j}^{m_j} \Biggr]
\longmapsto
p_Q\Biggl(\sum_{i=1}^r \sum_{j=1}^s n_i m_i \bar u_i^{\;-1}\bar v_j \Biggr) \in Q,
\end{equation}
where $u_i,v_i\in F$, $p_Q\co \ZZ[\pi]\to Q$ is the projection.
Furthermore, the group $N/N_F$ admits the following presentations:
\begin{equation}\label{eq:presentationNNF}
\begin{aligned}
N/N_F &\approx \langle x_\xi,\ \xi\in\pi \mid
[x_\xi,x_\eta][x_{\zeta\xi},x_{\zeta\eta}]^{-1},\
[x_\xi,[x_\eta,x_\zeta]],\ \xi,\eta,\zeta\in\pi\rangle \\
&\approx\left\langle \begin{array}{l}
e_\theta,\ \theta\in\pi\setminus\{1\}, \\
x_\xi,\ \xi\in\pi \end{array}
\left| \begin{array}{l}
\ [e_\theta,e_{\theta'}],\ [e_\theta,x_\xi],\ \theta,\theta'\in
\pi\setminus\{1\},\ \xi\in\pi, \\
\ [x_\xi,x_\eta] e^{-1}_{\xi^{-1}\eta},\ (\xi,\eta)\in\pi\times\pi\setminus\Delta
\end{array} \right.
\right \rangle.
\end{aligned}
\end{equation}
\end{Pro}

\begin{proof}
To establish the isomorphism $N_1/N_F\approx H_2(F/N_1)$, consider
the 5--term exact sequence
 $$
H_2(F) \to H_2(F/N_1) \to N_1/[F,N_1] \to F^{\ab} \to (F/N_1)^{\ab} \to 0
 $$
obtained from the short exact sequence $1 \to N_1 \to F \to F/N_1 \to 1$
by means of~\eqref{eq:Stallings}.
Since $H_2(F)=0$ and $F^{\ab}\to (F/N_1)^{\ab}$ is an isomorphism, it follows
that $H_2(F/N_1)\to N_1/[F,N_1]=N_1/N_F$ is an isomorphism.

To establish the isomorphism $H_2(F/N_1)\approx Q$,
consider the Hochschild--Serre spectral sequence~\cite{HS} (also called
the Lyndon--Hochschild--Serre spectral sequence) related to the short
exact sequence
 $
1\to N/N_1\to F/N_1\to \pi \to 1
 $.
Recall that this spectral sequence has the form
\begin{equation}\label{eq:E2}
E^2_{pq}=H_p(\pi,H_q(N/N_1)) \implies H_{p+q}(F/N_1),
\end{equation}
where the local coefficients $H_q(N/N_1)$ is the $\ZZ[\pi]$--module
corresponding to the action $\Ad_\pi^q\co \pi\to \Aut(H_q(N/N_1))$,
which is induced by the action $\Ad_\pi|_{N/N_1}\co \pi\to \Aut(N/N_1)$
given by conjugation: $(gN)\cdot(xN_1)=gxg^{-1}N_1$, $g\in F$, $x\in
N$.  By \fullref{pro:Nab}, $H_1(N/N_1)\approx(\ZZ[\pi],+)$ and the
action $\Ad_\pi^1$ is given by the left multiplication. Thus, from
\fullref{lem:zeroho}, we have that $E^2_{p1}=H_p(\pi,\ZZ[\pi])=0$ for
$p\ge 1$. By \fullref{lem:properties}, we have $E^2_{30}=H_3(\pi)=0$,
which implies $E^2_{02}=E^{\infty}_{02}$.

Let us show that $H_2(F/N_1)\approx E^2_{02}$.  If $B\not\in[F,F]$
then, by \fullref{lem:properties}, $E^2_{20}=E^{\infty}_{20}=0$, and
thus we get $H_2(F/N_1)=E^2_{02}$. Consider the remaining case,
$B\in[F,F]$.  Observe that both groups $E^2_{20}=H_2(\pi)$ and
$E^2_{01}=H_0(\pi,\ZZ[\pi])$ are isomorphic to $\ZZ$, due to
Lemmas~\ref{lem:properties} and~\ref{lem:zeroho}, respectively.  On
the other hand, the isomorphism $H_1(F/N_1)\approx
F^{ab}\approx\pi^{ab}\approx H_1(\pi)=E^2_{10}=E^\infty_{10}$ and
\eqref{eq:E2} for $p+q=1$ imply $E^\infty_{01}=0$. Thus the
differential $d^2_{20}\co E^2_{20}\to E^2_{01}$ is an isomorphism, and
it follows that $H_2(F/N_1)\approx E^2_{02}=H_0(\pi,H_2(N^{\ab}))$.

Now, due to \fullref{pro:pontr}, $H_2(N^{\ab})$ is isomorphic to
$\ZZ[J]$, where $J=(\pi\times\pi\setminus\Delta)/\Sigma_2$, and the
corresponding action $\Ad_\pi^2\co \pi \to
\Aut(H_2(N^{\ab}))\approx\Aut(\ZZ[J])$ is given by
$\zeta\cdot e_{\{\xi,\eta\}}=e_{\{\zeta\xi,\zeta\eta\}}$,
for each pair $(\xi,\eta)\in J_1$, and $\zeta \in\pi$, where
$J_1\subset \pi\times \pi\setminus\Delta$ is chosen to be invariant
under $\Ad_\pi^2$, see \fullref{pro:pontr}.  Therefore
$E^2_{02}=H_0(\pi,H_2(N^{\ab}))\approx H_0(\pi,\ZZ[J])$ is isomorphic
to the quotient of $\ZZ[J]$ by the system of relations
$e_{\{\xi,\eta\}}\sim e_{\{\zeta\xi,\zeta\eta\}}$, $(\xi,\eta)\in
J_1$, $\zeta\in\pi$.  Hence it is isomorphic to $\ZZ[I]\approx Q$.

To prove that~\eqref{eq:Qisom} defines a homomorphism $q_{N_F}$,
observe that the canonical projection $N_1/[N,N_1]\to N_1/[F,N_1]$
factors through the canonical projection of the group $N_1/[N,N_1]$
onto the quotient of $N_1/[N,N_1]$ by the system of relations
$p_{N_1}(n)\sim p_{N_1}(gng^{-1})$, $n\in N_1$, $g\in F$, where
$p_{N_1}\co N_1\to N_1/[N,N_1]$ is the canonical projection, see also
\fullref{pro:pontr}. Due to the isomorphism $N_1/[N,N_1]\to\ZZ[J]$
from \fullref{pro:pontr}, we obtain the system of relations
$e_{\{\xi,\eta\}}\sim e_{\{\zeta\xi,\zeta\eta\}}$, $(\xi,\eta)\in J_1$,
$\zeta\in\pi$, on $\ZZ[J]$.  This system of relations determines the
obvious equivalence relation $\sim$ on the basis $e_{\{\xi,\eta\}}$,
$(\xi,\eta)\in J_1$, of $\ZZ[J]$.  Thus the desired quotient of
$\ZZ[J]$ is the free abelian group $\ZZ[J/\sim]$, where the
equivalence classes of $\sim$ form a basis. This gives the desired
isomorphism $N_1/[F,N_1]\approx \ZZ[J/\sim]=\ZZ[I]$.
Now~\eqref{eq:Qisom} follows by observing that the equivalence class
of $e_{\{\bar u_i,\bar v_j\}}=q_{N_1}([B_{u_i},B_{v_j}])$ in $J$
corresponds to $p_Q(\bar u_i^{\;-1}\bar v_j)=q_{N_F}([B_{u_i},B_{v_j}])$
under the isomorphism $\ZZ[I]\approx Q$.

Now the presentation~\eqref{eq:presentationNNF}
follows by observing that the natural epimorphism of $N/[F,N_1]$ to the group
in the right-hand side of~\eqref{eq:presentationNNF} sending
$B_u[F,N_1]\mapsto x_{\bar u}$, $u\in W$, is well-defined. It has a trivial
kernel, because one can easily construct its inverse.
\end{proof}

\begin{Rem} \label {rem:Hilton}
Our first derived equation is the ``projection" of the equation~\eqref{eqN}
in $N$ to the quotient $N^{\ab}=N/[N,N]$, see \fullref{thm:first}. Our
second derived equation is the ``projection" of the equation~\eqref{eqN} to
the quotient $[N,N]/[F,[N,N]]$, via choosing suitable representatives of the
solutions of the first derived equation, see \fullref{thm:second}.
Observe that the subgroup $[N,N]$ is the second term $\Gamma^2(N)$ of the lower
central series
 $$
\Gamma^1(N)=N, \quad \Gamma^{i+1}(N)=[N,\Gamma^i(N)], \quad i\geq 1,
 $$
of the group $N$, while the subgroup $[F,[N,N]]$ is the third term
$\Gamma^3_F(N)$ of the lower central series
 $$
\Gamma^1_F(N)=N, \quad \Gamma^{i+1}_F(N)=[F,\Gamma^i(N)], \quad i\geq 1,
 $$
with respect to the action of the group $F$ on $N$ by conjugation, that is
$g\cdot x=gxg^{-1}$, $g\in F$, $x\in N$.
We recall (see Hilton~\cite{Hil}, or Hilton, Mislin and
Roitberg~\cite[Section~II.2]{HMR})
that if a group $G$ acts on a group $H$ then
the {\it lower central series with respect to the action of $G$ on $H$} is
defined as
 $$
\Gamma_G^1(H)=H, \quad \Gamma_G^{i+1}(H)=\gr\{(g\cdot x)yx^{-1}y^{-1}
\mid g\in G, \ x\in \Gamma^i(H), \ y\in H\}, \ i\geq 1.
 $$
Here $g\cdot x$ means the action of the automorphism defined by $g$ on the
element $x$, $\Gamma^i(H)$ is the usual lower central series of the group
$H$, and $\gr S$ denotes the minimal subgroup of $H$ containing a subset
$S\subset H$.
\end{Rem}

\subsection{The groups $N/[F,N]$ and $[F,N]/[F,[F,N]]$} \label{subsec:Q}

Here we study the quotients corresponding to the subgroups
$N\supset [F,N]\supset [F,[F,N]]$. One can apply them to study existence of
solutions of equations in free groups. However, the results of this subsection
are not used in our applications, and can be skipped in the first reading.

\begin{Pro} \label{pro:NF}
Under the hypothesis of \fullref{pro:Nab},
the group $N/[F,N]$ is isomorphic to $\ZZ$.
Moreover, there exists a short exact sequence
 $$
1\to [F,N] \stackrel{i}{\longrightarrow} N
\stackrel{\varepsilon q_N}{\longrightarrow} \ZZ \to 0,
 $$
where $i$ is the canonical inclusion,
$q_N\co  N\to (\ZZ[\pi],+)$ is the epimorphism given by~\eqref{eq:Nab},
while $\varepsilon\co \ZZ[\pi]\to\ZZ$ is the augmentation.
\end{Pro}

\begin{proof}
The first assertion follows in a straightforward way from the 5--term
exact sequence obtained from
the short exact sequence~\eqref{eq:shortFN}, namely
$1\to N \to F \to \pi \to 1$, by means of~\eqref{eq:Stallings}.
In detail, if $B\in[F,F]$ then $H_2(F)=0$,
$F^{\ab}\to\pi^{\ab}$ is an isomorphism, and $H_2(\pi)\approx\ZZ$, due to
\fullref{lem:properties}. Therefore
$H_2(\pi)\to N/[F,N]$ is an isomorphism, hence $N/[F,N]\approx\ZZ$.
Suppose that $B\not\in[F,F]$. Then $\ker(F^{\ab}\to\pi^{\ab})\approx\ZZ$, and
$H_2(\pi)=0$, due to \fullref{lem:properties}.
Therefore $N/[F,N]\approx \ker(F^{\ab}\to\pi^{\ab})\approx\ZZ$.

To prove the second assertion, observe that the composition
$\varepsilon q_N\co N\to \ZZ$ sends $B\mapsto 1$,
hence it is an epimorphism. Next we show that the kernel of $\varepsilon q_N$
equals $[F,N]$. The inclusion $\ker(\varepsilon q_N) \supset [F,N]$ follows
from the fact that $[F,N]$ is generated by the elements $[u,B_v]\in[F,N]$,
$u,v\in F$,
due to commutator calculus, while $[u,B_v]=B_{uv}B^{-1}_v$ is mapped to $1-1=0$
under $\varepsilon q_N$, thus $[u,B_v]\in\ker(\varepsilon q_N)$.
The converse inclusion follows by observing that any element
$u\in \ker(\varepsilon q_N)$ has the form
$u=B_{u_1}^{c_1}\ldots B_{u_r}^{c_r}$, for some
$r,c_1,\dots,c_r\in\ZZ$, $r\ge 0$, $u_1,\dots,u_r\in F$, where
$c_1+\ldots+c_r=0$. Clearly, the projection of $u$ to the quotient
$N/[F,N]$ equals the projection of the element
$B^{c_1}\ldots B^{c_r}=B^0=1$ to $N/[F,N]$, thus $u\in [F,N]$.
\end{proof}

\begin{Pro} \label {pro:FFN}
Under the hypothesis of \fullref{pro:Nab},
$$[F,N]/[F,[F,N]]\approx H_2(F/[F,N])\approx
H_1(\pi)\approx\pi^{\ab}.$$
Moreover, there exists a short exact sequence
 $$
1\to [F,[F,N]] \stackrel{i_F}{\longrightarrow} [F,N]
\stackrel{q_F}{\longrightarrow} \pi^{\ab} \to 0,
 $$
where $i_F$ is the canonical inclusion, $q_F$ is an epimorphism
sending $q_F\co [n,g]\mapsto \varepsilon q_N(n) \cdot
p_{\ab}(\bar g)\in\pi^{\ab}$, $n\in N$, $g\in F$. Here one uses an
additive notation for the group operation in $\pi^{\ab}$,
$p_{\ab}\co \pi\to \pi^{\ab}$ denotes the canonical projection,
$q_N\co N\to (\ZZ[\pi],+)$ is the epimorphism defined by~\eqref{eq:Nab}, $\varepsilon \co \ZZ[\pi]\to \ZZ$ is the
augmentation.
\end{Pro}

\begin{proof}
Let us prove the first assertion. Consider the 5--term exact sequence
 $$
H_2(F) \to H_2(F/[F,N]) \to [F,N]/[F,[F,N]] \to F^{\ab} \to (F/[F,N])^{\ab} \to 0
 $$
obtained from the short exact sequence $1\to[F,N]\to F\to F/[F,N]\to1$
by means of~\eqref{eq:Stallings}. Since $H_2(F)=0$ and
$F^{\ab}\to (F/[F,N])^{\ab}$ is an isomorphism, it follows that
$H_2(F/[F,N])\to[F,N]/[F,[F,N]]$ is an isomorphism.

Similarly to the proof of \fullref{pro:QH2},
consider the Hochschild--Serre spectral sequence related to
the short exact sequence $1\to N/[F,N] \to F/[F,N] \to \pi\to 1$.
Since $N/[F,N] \approx \ZZ$ by \fullref{pro:NF},
the spectral sequence has the form
 $$
E^2_{pq}=H_p(\pi,H_q(N/[F,N])) \approx H_p(\pi,H_q(\ZZ))
 \implies H_{p+q}(F/[F,N]),
 $$
similarly to~\eqref{eq:E2}, where the local coefficients $H_q(\ZZ)$ is the
trivial $\ZZ[\pi]$--module. Since $H_0(\ZZ)\approx H_1(\ZZ)\approx \ZZ$ and
$H_q(\ZZ)=0$ for $q\ge 2$, while the homology of $\pi$
vanishes in dimension 3 (due to \fullref{lem:properties}),
the only possible non-vanishing $E^2$ terms are those with $q=0,1$ and $p\ne 3$.
In particular, $E^2_{02}=0$, $E^2_{30}=H_3(\pi)=0$, and therefore
$E^\infty_{11}=E^2_{11}=H_1(\pi)$.

Let us show that $E^\infty_{20}=E^3_{20}=0$.
If $B\not\in[F,F]$ then, by \fullref{lem:properties},
$E^2_{20}=H_2(\pi)=0$.
Suppose that $B\in[F,F]$. Then $E^2_{20}=H_2(\pi)\approx\ZZ$
by \fullref{lem:properties}, moreover the projection
$(F/[F,N])^{\ab}\to H_1(\pi)=E^2_{10}$ is an isomorphism. Therefore
$d^2_{20}\co E^2_{20}\to E^2_{01}$ is an isomorphism, hence
$E^\infty_{20}=E^3_{20}=0$.

Since $E^2_{02}=0$ and $E^3_{20}=0$, we have the desired isomorphism
$H_2(F/[F,N])\approx E^\infty_{11}=E^2_{11}=H_1(\pi)$.

Let us prove the second assertion. We shall represent elements of
the quotient $[F,N]/[F,[F,N]]$ by elements of $[F,N]$, identified
under the congruence relation $g_1\equiv g_2$ {\it modulo}
$[F,[F,N]]$, and shall write $g_1\equiv g_2$ whenever
$g_1g_2^{-1}\in [F,[F,N]]$. Observe that every element $w\in[F,N]$
can be written in the form $w\equiv [B,u]$, for some $u\in F$, due
to the following congruences: $[B_u,v]\equiv [B,u^{-1}vu]$ and
$[B,uv]=[B,u][u,[B,v]][B,v]\equiv[B,u][B,v]$ for any $u,v\in F$.

Let us show that there exists an epimorphism
$p\co \pi^{\ab}\to [F,N]/[F,[F,N]]$ sending
$p_{\ab}(\bar u)\mapsto p_F([B,u])$, $u\in F$, where
$p_F\co [F,N]\to [F,N]/[F,[F,N]]$ is the canonical projection. To show that
such a map $p$ is well-defined, we use commutator
calculus and the following observations.
Using $[B,B_v]=[B,[v,B]]\in[N,[F,N]]\subset [F,[F,N]]$, $v\in F$, one shows
that $[N,N]\subset[F,[F,N]]$, which implies
$[B,un]\equiv[B,u][B,n]\equiv[B,u]$ for any $u\in F$ and $n\in N$.
Furthermore, using one of the Witt--Hall identities (see Magnus, Karrass
and Solitar~\cite[Theorem~5.1, (11)]{MKS})
one can show that $[N,[F,F]]\subset[F,[F,N]]$, which implies
$[B,uf']\equiv[B,u][B,f']\equiv[B,u]$ for any $u\in F$ and $f'\in[F,F]$.
The map $p$ is a homomorphism, since $[B,uv]\equiv[B,u][B,v]$ for any
$u,v\in F$, see above. Therefore the map $p$ is an epimorphism.

Since $\pi^{\ab}\approx[F,N]/[F,[F,N]]$, and $\pi^{\ab}$ is a finitely-generated
abelian group, it follows that any epimorphism $\pi^{\ab}\to [F,N]/[F,[F,N]]$ is
an isomorphism. Therefore the epimorphism $p$ is an isomorphism. It follows
that the composition $p^{-1}p_F\co [F,N]\to\pi^{\ab}$
is an epimorphism and satisfies the desired properties.
\end{proof}

\section{Derived equations in $\ZZ[\pi]$ and $\ZZ[\pi\setminus\{1\}]/\sim$}
\label{sec:Quad3}

The quadratic equations under consideration are the equations
\eqref{eqn1'}--\eqref{eqn4'}
of \fullref{subsec:appl}
with two unknowns $\xalpha\in N$, $\ybeta\in F_2$ in the free group
$F_2=\langle \aa,\bb\mid\rangle$ of rank~2, see \fullref{thm:class}.
Actually these equations are in the subgroup $N=\llangle B\rrangle $ where
$B=\aa\bb\aa^{-\varepsilon}\bb^{-1}$.
To prove some further non-existence results, we will apply the algebraic
approach developed in~\fullref{sec:Quad2}. For each of the
equations~\eqref{eqn2'}, \eqref{eqn3'} and~\eqref{eqn4'} in $N$,
we will construct two {\it derived equations}, which are in fact ``projections''
of the equation to the abelian quotients $N/N_1$ and $N_1/[F_2,N_1]$,
respectively, described in~\fullref{sec:Quad2},
see Propositions~\ref{pro:Nab} and~\ref{pro:QH2},
where $N_1=[N,N]$. The first derived equation is an equation in the group ring
$\ZZ[\pi]$ of the fundamental group $\pi=\pi_\varepsilon=F_2/N$ of the
corresponding target surface (this group ring, as an abelian group, is
isomorphic to the abelianised group $N$, see \fullref{pro:Nab}).
The second derived equation is an equation
in the quotient $Q$ of $\ZZ[\pi]$, see~\eqref{eq:Q}, and it is
obtained by ``projecting'' the equation to this quotient (actually, to
$N_1/[F_2,N_1]\approx Q$, see \fullref{pro:QH2}), via choosing
suitable representatives of the
solutions (if there exists any) of the first derived equation.

\subsection{The first derived equation} \label{subsec:1der}

Here we will construct the {\it first derived equation} for each of the
equations \eqref{eqn2'}, \eqref{eqn3'} and~\eqref{eqn4'} of
\fullref{subsec:appl}.
Due to \fullref{thm:class}, or Corollaries~\ref{cor:Wecken}(A)
and~\ref{cor:cond},
we can assume, without loss of generality, that $\xalpha\in N$, for a solution
$(\xalpha,\ybeta)$ of~\eqref{eqN}.
So, the left-hand side of the equation~\eqref{eqN} is the product of $\xalpha$
and $\ybeta \xalpha^{-\delta} \ybeta^{-1}$ where both elements belong to $N$.
The right-hand side is also the product of two elements of $N$, whose
projections to $N^{\ab}=N/[N,N]\approx(\ZZ[\pi],+)$ are $\vartheta \bar v$ and $1$,
respectively, where $\bar v=p_\pi(v)$ and $p_\pi\co F_2\to\pi$ is the
projection, see~\eqref{eq:shortNab}, \eqref{eq:Nab}.
So, we can project both sides of the equation to $N^{\ab}=N/[N,N]$, and we get:

\begin{Thm} \label{thm:first}
Suppose that $(\xalpha,\ybeta)$ is a solution of the equation~\eqref{eqN}
with $\xalpha\in N=\llangle  \aa\bb\aa^{-\varepsilon}\bb^{-1}\rrangle $,
$\ybeta\in F_2=\langle \aa,\bb\mid\rangle$. Let
$\tilde\xalpha=q_N(\xalpha)\in \ZZ[\pi]$,
$\bar\ybeta=p_\pi(\ybeta)\in \pi=F_2/N$
be the images of $\xalpha$, $\ybeta$ under the projections
$q_N\co N\to (\ZZ[\pi],+)\approx N^{\ab}=N/[N,N]$ and $p_\pi\co F_2\to\pi$,
respectively. Here
the natural identification of $N^{\ab}$, the abelianised group $N$,
with the group $(\ZZ[\pi],+)$ is given by~\eqref{eq:shortNab}, \eqref{eq:Nab}.
Then the pair $(\tilde\xalpha,\bar\ybeta)$ satisfies the following equation
called the {\it first derived equation}:
\begin{equation}\label {first:eq}
(1-\delta\bar\ybeta) \tilde\xalpha = 1 + \vartheta \bar v
\end{equation}
in the group ring $\ZZ[\pi]$,
with the ``unknowns'' $\bar\ybeta\in\pi$ and $\tilde\xalpha\in\ZZ[\pi]$.
Moreover, the properties~$\eqref{eq:faithful}$, $\eqref{eq:rank1}$ are valid.
Furthermore, any solution $(\tilde x,\bar y)$ of~$\eqref{first:eq}$
satisfies~$\eqref{eq:rank1}$.
\end{Thm}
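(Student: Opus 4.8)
The plan is to establish the three assertions in turn. First I would obtain the first derived equation~\eqref{first:eq} by applying the epimorphism $q_N\co N\to(\ZZ[\pi],+)$ of \fullref{pro:Nab} to both sides of~\eqref{eqN}, using three elementary facts: for $g,u\in F_2$ one has $gB_ug^{-1}=B_{gu}$, so by~\eqref{eq:Nab} the map $q_N$ intertwines conjugation by $g$ on $N$ with left multiplication by $\bar g$ on $\ZZ[\pi]$; $q_N(n^{-1})=-q_N(n)$ for $n\in N$; and $q_N(vB^\vartheta v^{-1})=q_N(B_v^\vartheta)=\vartheta\bar v$ while $q_N(B)=q_N(B_1)=1$, where $B=\aa\bb\aa^{-\varepsilon}\bb^{-1}$. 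Since $\xalpha\in N$, also $\ybeta\xalpha^{-\delta}\ybeta^{-1}\in N$ and $q_N(\xalpha^{-\delta})=-\delta\tilde\xalpha$; hence applying $q_N$ to the left side of~\eqref{eqN} gives $\tilde\xalpha-\delta\bar\ybeta\,\tilde\xalpha=(1-\delta\bar\ybeta)\tilde\xalpha$, and applying it to the right side gives $1+\vartheta\bar v$, which is~\eqref{first:eq}. Since $r=q=2$ and $\xalpha\in N$, \fullref{cor:Wecken}(A) (taken with $v_1=v$, $v_2=1$, $c_1=\vartheta$, $c_2=1$) yields~\eqref{eq:faithful} for the solution $(\xalpha,\ybeta)$, while~\eqref{eq:rank1} for $(\xalpha,\ybeta)$ will follow from the last assertion applied to the pair $(\tilde\xalpha,\bar\ybeta)$, which solves~\eqref{first:eq}.

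The substantive part is the last assertion: any $(\tilde x,\bar y)\in\ZZ[\pi]\times\pi$ with $(1-\delta\bar y)\tilde x=1+\vartheta\bar v$ satisfies~\eqref{eq:rank1}. Here I would use that $\pi$ is the fundamental group of the torus ($\varepsilon=+1$) or of the Klein bottle ($\varepsilon=-1$), hence torsion-free, so that $C=\langle\bar y\rangle$ is trivial or infinite cyclic. If $\bar y=1$, the equation reads $(1-\delta)\tilde x=1+\vartheta\bar v$; its left side lies in $2\ZZ[\pi]$ (it is $0$ if $\delta=1$ and $2\tilde x$ if $\delta=-1$), so comparing coefficients of the identity element forces $\bar v=1$, after which the equation becomes $(1-\delta)\tilde x=(1+\vartheta)\cdot 1$, which forces $\vartheta=-1$ when $\delta=1$ and imposes no constraint when $\delta=-1$; in either case one can pick $k\in\ZZ$ with $\bar y^k=1=\bar v$ and $\vartheta\delta^k=-1$, so~\eqref{eq:rank1} holds. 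If $\bar y\neq1$, then $C\cong\ZZ$, the group ring $\ZZ[C]\cong\ZZ[t,t^{-1}]$ (with $t=\bar y$) is an integral domain, and $\ZZ[\pi]$ is a free left $\ZZ[C]$-module on a set of representatives of the right cosets $C\backslash\pi$, on which $1-\delta\bar y$ acts diagonally. If $\bar v\notin C$, the $\ZZ[C]$-component of $1+\vartheta\bar v$ in the summand indexed by the coset $C$ is the unit $1\in\ZZ[C]$, so $1-\delta t$ would be a unit of $\ZZ[t,t^{-1}]$, which is impossible as the units there are the $\pm t^n$; hence $\bar v\in C$, say $\bar v=\bar y^k$. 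The $\ZZ[C]$-component over the coset $C$ then reads $(1-\delta t)\xi=1+\vartheta t^k$ for some $\xi\in\ZZ[t,t^{-1}]$, so $1-\delta t$ divides $1+\vartheta t^k$; writing $1-\delta t=-\delta(t-\delta)$ with $-\delta$ a unit and evaluating at $t=\delta$ (the factor theorem) gives $1+\vartheta\delta^k=0$, i.e.\ $\vartheta\delta^k=-1$, which together with $\bar v=\bar y^k$ is exactly~\eqref{eq:rank1}.

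I expect the main obstacle to be the case $\bar y\neq1$ above, and within it the clean handling of the $\ZZ[\langle\bar y\rangle]$-module decomposition of $\ZZ[\pi]$; the argument genuinely uses that $\langle\bar y\rangle$ is infinite cyclic, so that $\ZZ[\langle\bar y\rangle]\cong\ZZ[t,t^{-1}]$ is an integral domain in which $1-\delta\bar y$ is a nonzero non-unit, and this is where torsion-freeness of the torus and Klein bottle groups enters (for a nontrivial finite $\langle\bar y\rangle$, $\ZZ[\langle\bar y\rangle]$ has zero-divisors and the divisibility step collapses). By contrast, the derivation of~\eqref{first:eq} is routine commutator calculus, and the divisibility step in $\ZZ[t,t^{-1}]$ is a one-line application of the factor theorem.
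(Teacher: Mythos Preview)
Your proof is correct and follows essentially the same approach as the paper's: both decompose $\ZZ[\pi]$ along the left $\langle\bar y\rangle$-orbits (equivalently, view it as a free $\ZZ[\langle\bar y\rangle]$-module on coset representatives), project to the component containing $1$, and then evaluate at $\bar y=\delta$ to extract the sign condition $\vartheta\delta^k=-1$ (the paper calls this evaluation the $\chi$-twisted augmentation). The only cosmetic difference is that, to force $\bar v\in\langle\bar y\rangle$, you argue $1-\delta\bar y$ is a non-unit in $\ZZ[t,t^{-1}]$, whereas the paper observes that the ordinary augmentation of $(1-\delta\bar y)\tilde x_1$ is even while that of $1$ is odd.
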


\begin{proof}
The group $N^{\ab}$
is isomorphic to the abelian group $(\ZZ[\pi],+)$, see \fullref{pro:Nab}.
Under this isomorphism, the element $B=\aa\bb\aa^{-\varepsilon}\bb^{-1}\in N$
is identified with $1\in\pi\subset\ZZ[\pi]$, and $B_u$ with
$\bar u\in\pi\subset\ZZ[\pi]$, thus the right-hand side of the equation is
identified with $1+\vartheta \bar v$. Moreover, the conjugation of
$B_u=uBu^{-1}$, $u\in F_2$,
by an element $z\in F_2$ equals $B_{zu}$, which is identified with
$\bar z\bar u\in\pi\subset\ZZ[\pi]$.
It follows that the projection of the left-hand side to $N^{\ab}$ equals
$\tilde\xalpha-\delta\bar\ybeta\tilde\xalpha=(1-\delta\bar\ybeta)\tilde\xalpha$,
which gives~\eqref{first:eq}. The properties~\eqref{eq:faithful},
\eqref{eq:rank1} are due to \fullref{rem:rank1}.

Let us derive the property~\eqref{eq:rank1} from~\eqref{first:eq}. Suppose
that $(\tilde x,\bar y)\in\ZZ[\pi]\times\pi$
is a solution of~\eqref{first:eq}. Consider the left action of the infinite
cyclic group $\langle t\rangle\approx\ZZ$ on $\pi$ via $t\cdot g=\bar y g$,
$g\in\pi$. Consider the orbits $\L_g=\ZZ\cdot g$, $g\in\pi$. For any
$\tilde x\in\ZZ[\pi]$, denote by
$\tilde x_g\in\ZZ[\L_g]$ the image of $\tilde x$ under the projection
$\ZZ[\pi]\to\ZZ[\L_g]$, $g\in\pi$. It follows from~\eqref{first:eq} that
$(1-\delta\bar y)\tilde x_1=1+\vartheta\bar v$ if $\bar v\in\L_1$, and
$(1-\delta\bar y)\tilde x_1=1$ if $\bar v\not\in\L_1$.
Since the augmentation of the left-hand side is even, this implies
$\bar v\in\L_1=\langle\bar y\rangle$, thus $\bar v=\bar y^k$ for some $k\in\ZZ$.
If $\bar y=1$ and $\delta=-1$, the property~\eqref{eq:rank1} is now obvious,
since it is equivalent to $1=1^k$ and $(-1)^k\vartheta=-1$, for some $k\in\ZZ$.
In the remaining case ($\bar y\ne1$ or $\delta=1$), consider the homomorphism
$\chi\co \langle\bar y\rangle\to\ZZ^*=\{1,-1\}\subset\ZZ$
sending $\bar y\mapsto\delta$ (it is well-defined, since $\pi$ is a torsion
free group). By extending $\chi$ linearly to the group ring
$\ZZ[\langle\bar y\rangle]$, one obtains the {\it $\chi$--twisted augmentation}
$\varepsilon_\chi\co \ZZ[\langle\bar y\rangle]\to\ZZ$. From above, we have
$(1-\delta\bar y)\tilde x_1=1+\vartheta\bar y^k$, where
$\tilde x_1\in\ZZ[\langle\bar y\rangle]$, a Laurent polynomial in
$\bar y$. Since $\chi$--twisted augmentation of the left-hand side vanishes, we
have $0=\varepsilon_\chi(1+\vartheta\bar y^k)=1+\vartheta\delta^k$. This completes
the derivation of~\eqref{eq:rank1} from~\eqref{first:eq}.
\end{proof}

\subsection{Solutions of the first derived equation in the ``mixed'' cases}
\label{subsec:sol1}

Here we study separately the solutions of the first derived
equation~\eqref{first:eq} in the {\it mixed cases} described in
\fullref{rem:tables} and \fullref{def:mixed}, see also
Tables~\ref{tbl1} and~\ref{tbl2}.  Recall that, for any
solution~$(\tilde x,\bar y)\in (\ZZ[\pi])\times\pi$ of the first
derived equation~\eqref{first:eq} in a mixed case, $\bar v$ belongs to
the cyclic subgroup of $\pi=\pi_\varepsilon$ generated by
$\bar\ybeta^2$, see \fullref{thm:class}(5). We will call a solution
$(\tilde x,\bar y)$ of \eqref{first:eq} {\it faithful} if $w_\varepsilon(\bar
y)=\delta$, see \eqref{eq:faithful}.

\subsubsection*{Case of the equation \eqref{eqn2'}} Here $\delta=1$, $\varepsilon=-1$.
We will only consider non-faithful solutions of~\eqref{first:eq}
for $\vartheta=-1$, $v\in F_2$ such that
$\bar v=p_K(v)=\bar \bb^{2n}$, $n\in\ZZ$, see~\fullref{rem:tables}.
Denote $c_L=\bb\aa^{-L}$, $L\in\ZZ$.

\begin{Lem} \label {lem:sol21} 
For the equation~\eqref{eqn2'} with $\vartheta=-1$, the non-faithful solutions of
the first derived equation~$\eqref{first:eq}$ are described by
\begin{equation}\label{eqn2_1}
(1-\bar\ybeta) \tilde\xalpha = 1 - \bar v, \quad \mbox{where} \quad
w_-(\bar\ybeta)=-1,
\tag{$2_1$}
\end{equation}
in $\ZZ[\pi]$, with the unknowns $\tilde\xalpha\in\ZZ[\pi]$,
$\bar\ybeta\in\pi$, where $\pi=\pi_-$.
For $v$ satisfying $\bar v=p_K(v)=\bar \bb^{2n}$, $n\in\ZZ$, the
solutions of the equation~\eqref{eqn2_1} are given by
 $$
\bar \ybeta=\bar c_L^\ell=(\bar \aa^L \bar \bb)^\ell, \quad
\tilde \xalpha = \frac {1 - \bar c_L^{2n}}{1 - \bar c_L^\ell}
= \left\{ \begin{array}{ll}
1+\bar c_L^\ell+\bar c_L^{2\ell}+\ldots+\bar c_L^{2n-\ell}, & n/\ell > 0, \\
0, & n=0, \\
-\bar c_L^{\;-\ell}-\bar c_L^{\;-2\ell}-\ldots-\bar c_L^{2n}, & n/\ell < 0,
\end{array}\right.
 $$
where $L\in\ZZ$ is arbitrary, and $\ell$ runs over the set of all odd divisors
of $n$. For $n=0$ we assume that $\ell$ is any odd number.
\end{Lem}

\begin{proof}
The equation~\eqref{eqn2_1} follows from \fullref{thm:first}.
Suppose $\bar \ybeta=\bar\aa^L\bar\bb^\ell$.
Because the solution is non-faithful, it follows that $\ell$ is odd. Since
$\bar v=\bar \bb^{2n}$ belongs to the subgroup  generated by $\bar\ybeta$, it
follows that $\bar v=\bar\ybeta^k=(\bar \aa^{L}\bar \bb^{\ell})^k$, for some
$k\in\ZZ$. This implies that $\ell$ is a divisor of $2n$.
Thus $\bar\ybeta$ has the form given by the second part of the
Lemma. It follows by a straightforward calculation that all values of
$(\tilde\xalpha,\bar\ybeta)$ given by Lemma are solutions. That they are the
only solutions follows from the fact that the
group ring $\ZZ[\pi_-]$ has no zero divisors, since $\pi_-$ is a solvable
torsion free group, see Kropholler, Linnell and Moody~\cite[Theorem~1.4]{KLM}.
\end{proof}

\begin{Rem} \label {rem:repres2}
Later, the following representatives
$(\xalpha_{L,\ell},\ybeta_{L,\ell})\in N\times F_2$ of the
solutions $(\tilde\xalpha,\bar\ybeta)$ of \eqref{eqn2_1} from \fullref{lem:sol21}
will be used:
\begin{align*}
\ybeta_{L,\ell}&=c_L^\ell=(\bb\aa^{-L})^\ell, \\
\xalpha_{L,\ell}&= \begin{cases}
B_{c_L^{2n-\ell}}B_{c_L^{2n-2\ell}}B_{c_L^{2n-3\ell}} \ldots B_{c_L^\ell}B,
& n/\ell >0, \\
1, & n=0, \\
B^{-1}_{c_L^{2n}}B^{-1}_{c_L^{2n+\ell}}B^{-1}_{c_L^{2n+2\ell}} \ldots
B^{-1}_{c_L^{-2\ell}} B^{-1}_{c_L^{-\ell}},
& n/\ell<0,
\end{cases}
\end{align*}
where $B=\aa\bb\aa\bb^{-1}$, $\ell\ne0$ is any odd number if $n=0$, or any odd
divisor of $n$ if $n\ne 0$, thus the number of factors in the expression for
$\xalpha_{L,\ell}$ is even and equal to $2|n/\ell|$.
\end{Rem}

\subsubsection*{Case of the equation \eqref{eqn3'}}
Here $\delta=-1$, $\varepsilon=1$, and
all solutions are non-faithful. We will consider only the case
where $\vartheta=-1$ and $\bar v=p_T(v)=\bar \aa^{2m}\bar \bb^{2n}$,
see \fullref{rem:tables}.

If $|m|+|n|>0$, let us denote $d=\gcd(m,n)$ and $c=\aa^{m/d} \bb^{n/d}$.

\begin{Lem} \label {lem:sol31} 
For the equation~\eqref{eqn3'} with $\vartheta=-1$, the solutions of
the first derived equation~$\eqref{first:eq}$
are described by
\begin{equation}
\label{eqn3_1}
(1+\bar\ybeta) \tilde\xalpha = 1 - \bar v
\tag{$3_1$}
\end{equation}
in $\ZZ[\pi]$, with the unknowns $\tilde\xalpha\in\ZZ[\pi]$,
$\bar\ybeta\in\pi$, where $\pi=\pi_+$.
For $v$ satisfying $\bar v=p_T(v)=\bar \aa^{2m}\bar \bb^{2n}$, $m,n\in\ZZ$,
$|m|+|n|>0$, all solutions of this equation are given by
\begin{align*}
\bar \ybeta&=\bar c^\ell , \\
\tilde \xalpha &= \frac {1 - \bar c^{2d}}{1 + \bar c^\ell}
= \begin{cases}
1-\bar c^\ell+\bar c^{2\ell}-\ldots
         +\bar c^{2d-2\ell}-\bar c^{2d-\ell}, & \ell > 0, \\
\bar c^{\;-\ell}-\bar c^{\;-2\ell}+\ldots
         -\bar c^{2d+2\ell}+\bar c^{2d+\ell}-\bar c^{2d}, & \ell < 0,
\end{cases}
\end{align*}
where $\ell\ne0$ is any divisor of $d=\gcd(m,n)$,
$\bar c=\bar \aa^{m/d}\bar \bb^{n/d}$.
If $v$ satisfies $\bar v=p_T(v)=1$ then all solutions are given by
$\tilde\xalpha=0$ and $\bar\ybeta\in\pi$ is any element.
\end{Lem}

\begin{proof}
The equation~\eqref{eqn3_1} follows from \fullref{thm:first}.
Let $\bar v=\bar \aa^{2m}\bar \bb^{2n}$ and $\bar\ybeta=\bar \aa^r\bar \bb^s$,
$m,n,r,s\in\ZZ$. Suppose $|m|+|n|>0$.
Since $\bar v$ belongs to the subgroup generated by $\bar\ybeta^2$, it
follows that $\bar v=\bar\ybeta^{2k}=\bar \aa^{2kr}\bar \bb^{2ks}$, for some
$k\in\ZZ$. This implies $kr=m$, $ks=n$, thus $k$ is a divisor of $d$, and
$\bar\ybeta=\bar\aa^{m/k}\bar \bb^{n/k}=\bar c^\ell$ where $\ell=d/k$.
Thus $\bar\ybeta$ has the form given by the second part of the Lemma.
It follows by a straightforward calculation that all values of
$(\tilde\xalpha,\bar\ybeta)$ given by Lemma are solutions. That they are the
only solutions follows from the fact that the group ring $\ZZ[\pi_+]$ has no
zero divisors.

Suppose $m=n=0$, thus $\bar v=1$, and the right-hand side of the
equation~\eqref{eqn3_1} vanishes. Since $1+\bar\ybeta\ne0$ in $\ZZ[\pi]$ for any
$\bar\ybeta\in\pi$, it follows that $\tilde\xalpha=0$, since the group
ring $\ZZ[\pi_+]$ has no zero divisors, since it is a polynomial ring.
\end{proof}

\begin{Rem} \label{rem:repres3}
Suppose that $\bar v\ne1$, thus $\bar v=\bar \aa^{2m}\bar \bb^{2n}$ with
$|m|+|n|>0$. Denote $d=\gcd(m,n)$, $c=\aa^{m/d}\bb^{n/d}$,
$B=\aa\bb\aa^{-1}\bb^{-1}$.
Later, the following representatives $(\xalpha_\ell,\ybeta_\ell)\in N\times F_2$
of the solutions $(\tilde\xalpha,\bar\ybeta)$ of \eqref{eqn3_1} from
\fullref{lem:sol31} will be used:
\begin{align*}
\ybeta_\ell&=c^\ell, \\
\xalpha_\ell&= \begin{cases}
B_{c^{2d-2\ell}}B_{c^{2d-4\ell}}{\ldots}B_{c^{2\ell}}B
B^{-1}_{c^\ell}B^{-1}_{c^{3\ell}}{\ldots}B^{-1}_{c^{2d-3\ell}}B^{-1}_{c^{2d-\ell}},
& \ell>0, \\
B^{-1}_{c^{2d}}B^{-1}_{c^{2d+2\ell}}{\ldots}B^{-1}_{c^{-4\ell}}B^{-1}_{c^{-2\ell}}
B_{c^{-\ell}}B_{c^{-3\ell}}{\ldots}B_{c^{2d+3\ell}} B_{c^{2d+\ell}},
& \ell<0,
\end{cases}
\end{align*}
where $\ell\ne0$ is any divisor of $d$, thus the number of factors in the
expression for $\xalpha_\ell$ is even and equal to $2d/|\ell|$.

For $\bar v=1$, we will use the representatives
$\xalpha_{L,\ell}=1$ and $\ybeta_{L,\ell}=\aa^L \bb^\ell$, where $L,\ell\in\ZZ$.
Actually $L,\ell$ coincide with the exponents in the canonical form of
$\bar\ybeta\in\pi_+$, see~\eqref{eq:canon}.
\end{Rem}

\subsubsection*{Case of the equation \eqref{eqn4'}}
Here $\delta=\varepsilon=-1$.
First we consider the case of faithful solutions where $\vartheta=-1$ and
$\bar v=p_K(v)=\bar \bb^{2n}$, see \fullref{rem:tables}.

As above, we denote $c_L=\bb\aa^{-L}$.

\begin{Lem} \label{lem:sol41f}
For the equation~\eqref{eqn4'} with $\vartheta=-1$, the faithful solutions of
the first derived equation~$\eqref{first:eq}$
are described by
\begin{equation}
\label{eqn4_1f}
(1+\bar\ybeta) \tilde\xalpha = 1 - \bar v, \quad \mbox{where} \quad
w_-(\bar\ybeta)=-1,
\tag{$4_1^\f$}
\end{equation}
in $\ZZ[\pi]$, with the unknowns $\tilde\xalpha\in\ZZ[\pi]$,
$\bar\ybeta\in\pi$, where $\pi=\pi_-$.
For $v$ satisfying $\bar v=p_K(v)=\bar \bb^{2n}$, $n\in\ZZ$, the
solutions of this equation are given by
\begin{align*}
\bar \ybeta&=\bar c_L^\ell=(\bar \aa^L \bar \bb)^\ell, \\
 \tilde \xalpha &= \frac {1 - \bar c_L^{2n}}{1 + \bar c_L^\ell} =
\begin{cases}
1-\bar c_L^\ell+\bar c_L^{2\ell}-\ldots+\bar
c_L^{2n-2\ell}-\bar c_L^{2n-\ell},
   & n/\ell > 0, \\
0, & n=0, \\
\bar c_L^{\;-\ell}-\bar c_L^{\;-2\ell}+\ldots+\bar c_L^{2n+\ell}-\bar c_L^{2n},
& n/\ell<0,
\end{cases}
\end{align*}
where $L\in\ZZ$ is arbitrary, and $\ell$ runs over the set of all odd divisors
of $n$. For $n=0$ we assume that $\ell$ is any odd number.
Compare \fullref{lem:sol21}.
\end{Lem}

\begin{proof}
Similar to that of \fullref{lem:sol21}.
\end{proof}

\begin{Rem} \label {rem:repres4f}
Later, the following representatives
$(\xalpha_{L,\ell},\ybeta_{L,\ell})\in N\times F_2$ of the
solutions $(\tilde\xalpha,\bar\ybeta)$ of \eqref{eqn4_1f} from
\fullref{lem:sol41f} will be used:
\begin{align*}
\ybeta_{L,\ell}&=c_L^\ell, \\
\xalpha_{L,\ell}&= \begin{cases}
B_{c_L^{2d-2\ell}}B_{c_L^{2d-4\ell}} \ldots B_{c_L^{2\ell}}B
B^{-1}_{c_L^\ell}B^{-1}_{c_L^{3\ell}}\ldots
B^{-1}_{c_L^{2d-3\ell}}B^{-1}_{c_L^{2d-\ell}},
& n/\ell >0, \\
1, & n=0, \\
B^{-1}_{c_L^{2d}}B^{-1}_{c_L^{2d+2\ell}} \dots
B^{-1}_{c_L^{-4\ell}}B^{-1}_{c_L^{-2\ell}}
B_{c_L^{-\ell}}B_{c_L^{-3\ell}}\ldots B_{c_L^{2d+3\ell}}
B_{c_L^{2d+\ell}}, & n/\ell<0,
\end{cases}
\end{align*}
where $c_L=\bb\aa^{-L}$,
$B=\aa\bb\aa\bb^{-1}$, $\ell\ne0$ is any odd number if $n=0$, or any odd
divisor of $n$ if $n\ne 0$, thus the number of factors in the expression for
$\xalpha_{L,\ell}$ is even and equal to $2|n/\ell|$.
Compare Remarks~\ref{rem:repres2} and~\ref{rem:repres3}.
\end{Rem}

Now consider the case of non-faithful solutions where $\vartheta=-1$ and
$\bar v=p_K(v)=\bar \aa^{2m}\bar \bb^{4n}$, see \fullref{rem:tables}.
If $|m|+|n|>0$, we denote $d=\gcd(m,n)$, $c=\aa^{m/d} \bb^{2n/d}$.

\begin{Lem}\label{lem:sol41nf}
For the equation~\eqref{eqn4'} with $\vartheta=-1$, the non-faithful solutions of
the first derived equation~\eqref{first:eq} are described by
\begin{equation}
\label{eqn4_1nf}
(1+\bar\ybeta) \tilde\xalpha = 1 - \bar v, \quad \mbox{where} \quad
w_-(\bar\ybeta)=1,
\tag{$4_1^\nf$}
\end{equation}
in $\ZZ[\pi]$, with the unknowns $\tilde\xalpha\in\ZZ[\pi]$,
$\bar\ybeta\in\pi$, where $\pi=\pi_-$.
For $v$ satisfying $\bar v=p_K(v)=\bar \aa^{2m}\bar \bb^{4n}$, $m,n\in\ZZ$,
$|m|+|n|>0$,
all non-faithful solutions of this equation are given by the same formulae
as in \fullref{lem:sol31}:
\begin{align*}
\bar \ybeta&=\bar c^\ell, \\
\tilde \xalpha &= \frac {1 - \bar c^{2d}}{1 + \bar c^\ell}
= \begin{cases}
1-\bar c^\ell+\bar c^{2\ell}-\ldots
         +\bar c^{2d-2\ell}-\bar c^{2d-\ell}, & \ell > 0, \\
\bar c^{\;-\ell}-\bar c^{\;-2\ell}+\ldots
         -\bar c^{2d+2\ell}+\bar c^{2d+\ell}-\bar c^{2d}, & \ell < 0,
\end{cases}
\end{align*}
where $\ell\ne0$ is any divisor of $d=\gcd(m,n)$,
$\bar c=\bar \aa^{m/d}\bar \bb^{2n/d}$.
If $v$ satisfies $\bar v=p_K(v)=1$ then all non-faithful solutions are given
by: $\tilde\xalpha=0$ and $\bar\ybeta\in\pi$ is any element satisfying
$w_-(\bar\ybeta)=1$.
\end{Lem}

\begin{proof}
Similar to that of \fullref{lem:sol31} (see also the end of the
proof of \fullref{lem:sol21}).
\end{proof}

\begin{Rem} \label {rem:repres4nf}
Suppose that $\bar v\ne1$, thus $\bar v=\bar \aa^{2m}\bar \bb^{4n}$ with
$|m|+|n|>0$. Denote $d=\gcd(m,n)$, $c=\aa^{m/d}\bb^{2n/d}$,
$B=\aa\bb\aa\bb^{-1}$. We will later use the following representatives
$(\xalpha_\ell,\ybeta_\ell)\in N\times F_2$ of the solutions
$(\tilde\xalpha,\bar\ybeta)$ of \eqref{eqn4_1nf} from \fullref{lem:sol41nf}.
We define these representatives by the same formulae as in
\fullref{rem:repres3}.

For $\bar v=1$, we will use the following representatives:
$\xalpha_{L,\ell}=1$, $\ybeta_{L,\ell}=\aa^L \bb^{2\ell}$, where $L,\ell\in\ZZ$.
Actually $L,2\ell$ coincide with the exponents in the canonical form of
$\bar\ybeta\in\pi_-$, see~\eqref{eq:canon}.
\end{Rem}

\subsection{The second derived equation in the ``mixed'' cases}
\label{subsec:2der}

In order to find further properties of the solutions
of the equations \eqref{eqn2'}, \eqref{eqn3'} and
\eqref{eqn4'} in the ``mixed'' cases (see \fullref{subsec:appl},
Tables~\ref{tbl1} and~\ref{tbl2},
\fullref{rem:tables}, and \fullref{def:mixed}),
we will construct the {\it second derived equation} \eqref{eqn2_2}
(resp.\ \eqref{eqn3_2} or \eqref{eqn4_2f}, \eqref{eqn4_2nf})
for the equation \eqref{eqn2'} (resp.\ \eqref{eqn3'}, or \eqref{eqn4'}).
More specifically, for every solution
of one of the first derived equations~\eqref{eqn2_1}, \eqref{eqn3_1},
\eqref{eqn4_1f}, and~\eqref{eqn4_1nf}
(see Lemmas~\ref{lem:sol21}, \ref{lem:sol31}, \ref{lem:sol41f}
and~\ref{lem:sol41nf})
we will construct an equation in the free abelian group $Q$, see~\eqref{eq:Q},
which is the quotient
\begin{equation}\label{eq:Qpm}
Q=Q_\varepsilon
= (\ZZ[\pi\setminus\{1\}])/\langle g+g^{-1}\mid g\in\pi\setminus\{1\}\rangle
 , \ \mbox{with}\ \pi=\pi_\varepsilon=F_2/N,
\end{equation}
of the free abelian group $\ZZ[\pi\setminus\{1\}]$ by the system of
relations $g\sim-g^{-1}$, $g\in\pi\setminus\{1\}$, where
$N:=\llangle \aa\bb\aa^{-\varepsilon}\bb^{-1}\rrangle $. (This quotient is
isomorphic to $[N,N]/[F_2,[N,N]]$, see \fullref{pro:QH2}.)
Consequently, we will obtain (in \fullref{thm:second}) an
equation, which we will call the {\it second derived equation}, in
two unknown ``polynomials'' $X\in Q$, $Y\in \ZZ[\pi]$, and some
integer unknowns which enumerate the solutions of the first derived
equation.

As an application of the second derived equation, we will obtain
the non-existence results stated in Tables~\ref{tbl3} and~\ref{tbl4}, see~\fullref{sec:Quadtab}.
For this, we will use the following property of the derived equations,
which follows from Theorems~\ref{thm:first} and~\ref{thm:second}:
the non-existence of a solution of either the first or the
second derived equation implies the non-existence of a (faithful or
non-faithful) solution of the corresponding quadratic equation \eqref{eqN}
in $N$.

\subsubsection*{Case of the equation \eqref{eqn2'}}
Here $\delta=1$, $\varepsilon=-1$,
and we may assume that $\vartheta=-1$ and $\bar v=\bar\bb^{2n}\in\pi$,
$n\in\ZZ$, $\pi=\pi_-$, see \fullref{tbl2} and \fullref{rem:tables}.
We consider the following pair of derived equations
(corresponding to non-faithful solutions).
The first derived equation is~\eqref{eqn2_1} in $\ZZ[\pi]$, $\pi=\pi_-$, with
the unknowns $\bar\ybeta\in\pi$ and $\tilde\xalpha\in\ZZ[\pi]$,
see \fullref{lem:sol21}. By this Lemma, the solutions have the form
 $
\bar\ybeta=\bar\ybeta_{L,\ell} = \bar\aa^L \bar\bb^\ell , \
\tilde\xalpha=\tilde\xalpha_{L,\ell}
= \frac{1-\bar \bb^{2n}}{1-\bar \aa^L \bar \bb^\ell}
 $
for all $L,\ell\in\ZZ$ such that
\begin{equation}\label {ell}
\ell\mid n \quad\mbox{if}\quad n\ne0, \quad\mbox{and} \quad \ell \mbox{ is odd}
\end{equation}
(the latter condition corresponds to the fact that a solution to be found is
non-faithful).
Our {\it second derived equation} will be the following equation
in the quotient $Q=Q_-$, see~\eqref{eq:Qpm}:
\begin{equation}
\label{eqn2_2}
p_Q\left( {\displaystyle \frac{1-\bar\bb^{-2n}}{1-\bar\bb^{-\ell}}}
\cdot \varphi^L(Y) \right) = p_Q \left( \varphi^L(V)
 -{\displaystyle\frac{1-\bar\bb^{-2n}}{1-\bar\bb^2} \bar\bb
        \frac{1-\bar\aa^L}{1-\bar\aa}
+ \frac{1-\bar\bb^{-2n}}{1-\bar\bb^{\ell}} } \right) \! , \!\!\!\!\!
\tag{$2_2$}
\end{equation}
where $p_Q\co \ZZ[\pi]\to\ZZ[\pi\setminus\{1\}]\to Q$ is the
projection, $\varphi\in\Aut(F_2)$ denotes the automorphism sending
$\aa\mapsto \aa$, $\bb\mapsto \bb\aa$, and
$B=B_-=\alpha\beta\alpha\beta^{-1}\mapsto B$, as well as the induced
automorphism of $Q$. The parameter $V\in\ZZ[\pi]$ of the
equation~\eqref{eqn2_2} is defined via
\begin{equation}\label{eq:**}
v=v_0 \prod B^{n_i}_{v_i}, \quad V=\sum n_i \bar v_i,
\quad v_0=\bb^{2n},
\end{equation}
see \eqref{eq:Nab}, while 
the unknowns are $(L,\ell,X,Y)$ with
$L,\ell\in\ZZ$ as in~\eqref{ell}, and
$X\in Q$, $Y\in\ZZ[\pi]$. Remark that the unknown $X$ does not contribute to
the equation \eqref{eqn2_2}, thus $X$ can be arbitrary.

In the special case $n=0$,
the second derived equation~\eqref{eqn2_2} has the form
$0=p_Q(V)$ with the unknowns $L,\ell\in\ZZ$, $\ell$ odd, and $X\in Q$,
$Y\in\ZZ[\pi]$. Since no unknown
contributes to this equation, a solution exists if and only if $p_Q(V)=0$,
moreover if $p_Q(V)=0$ then arbitrary values of the unknowns determine a
solution.

\subsubsection*{Case of the equation \eqref{eqn3'}}
Here $\delta=-1$, $\varepsilon=1$, and we may assume that $\vartheta=-1$ and
$\bar v=\bar \aa^{2m}\bar \bb^{2n}\in\pi$, $m,n\in\ZZ$, $\pi=\pi_+$,
see \fullref{tbl2} and \fullref{rem:tables}.
For the equation~\eqref{eqn3'} (it has only non-faithful solutions)
we consider the following pair of derived equations.
The first derived equation is~\eqref{eqn3_1}
in $\ZZ[\pi]$, $\pi=\pi_+$, with
the unknowns $\bar\ybeta\in\pi$ and $\tilde\xalpha\in\ZZ[\pi]$,
see \fullref{lem:sol31}. By this Lemma, for $|m|+|n|>0$ the solutions have
the form
 $
 \bar\ybeta = \bar\ybeta_\ell = \bar c^\ell, \
 \tilde\xalpha = \tilde\xalpha_\ell= \frac{1-\bar c^{2d}}{1+\bar c^\ell}
 $
where $d=\gcd(m,n)$, $\ell\in\ZZ$ such that $\ell\mid d$, and
$c=\aa^{m/d}\bb^{n/d}\in\pi$, while for $m=n=0$ the solutions
have the form
 $
 \bar\ybeta = \bar\ybeta_{L,\ell} = \bar\aa^L\bar\bb^\ell, \
 \tilde\xalpha = \tilde\xalpha_{L,\ell}=0
 $
where $L,\ell\in\ZZ$.
Our {\it second derived equation} will be the following equation
in the quotient
$Q=Q_+$, see~\eqref{eq:Qpm}:
\begin{equation}
\label{eqn3_2}
\left\{ \begin{array}{rcll}
2X
- {\displaystyle p_Q\left( \frac{1-\bar c^{\;-2d}}{1+\bar c^{\;-\ell}} \cdot Y
\right)}
 \!\!\!
 &=& \!\!\! {\displaystyle
 p_Q\left( V +
\frac{1-\bar c^{\;-2d}}{1-\bar c^{2\ell}} \right) }
         & \mbox{if } |m|+|n|>0, \!\!\!\!\!\!\!\! \\
  2X\!\!\!&=&\!\!\!p_Q(V) & \mbox{if } m=n=0, \end{array}\right.
\tag{$3_2$}
\end{equation}
where $p_Q\co \ZZ[\pi]\to\ZZ[\pi\setminus\{1\}]\to Q$ is the
projection. The parameter $V\in\ZZ[\pi]$ of the
equation~\eqref{eqn3_2} is defined similarly to above, with
$\pi=\pi_+$, $B=B_+=\alpha\beta\alpha^{-1}\beta^{-1}$, via
\begin{equation}\label{eq:***}
v=v_0 \prod B^{n_i}_{v_i}, \quad V=\sum n_i \bar v_i,
\quad v_0=\left\{\begin{array}{ll}c^{2d},& |m|+|n|>0,\\1,&m=n=0,\end{array}\right.
\end{equation}
while the unknowns are either $(\ell,X,Y)$ with $\ell\in\ZZ$, $\ell\mid d$,
$X\in Q$, $Y \in \ZZ[\pi]$ if $|m|+|n|>0$, or $(L,\ell,X,Y)$
with $L,\ell\in\ZZ$, $X\in Q$, $Y\in\ZZ[\pi]$ if $m=n=0$.

In the special case $m=n=0$,
the second derived equation~\eqref{eqn3_2} has the form
$2X=p_Q(V)$ with the unknowns $L,\ell\in\ZZ$, $X\in Q$, $Y\in\ZZ[\pi]$. It
admits
a solution if and only if $2\mid p_Q(V)$, moreover for $2\mid p_Q(V)$ the value
of $X$ is uniquily determined, while the unknowns $(L,\ell,Y)$ take arbitrary
values, in order to determine a solution.

\subsubsection*{Case of the equation \eqref{eqn4'}, non-faithful solutions}
Here $\delta=\varepsilon=-1$, and we may assume that $\vartheta=-1$ and
$\bar v=\bar \aa^{2m}\bar \bb^{4n}$, $m,n\in\ZZ$, $\pi=\pi_-$, see
\fullref{tbl2} and
\fullref{rem:tables}. We consider the following pair of derived equations
(corresponding to non-faithful solutions). The first derived equation
is~\eqref{eqn4_1nf} in $\ZZ[\pi]$, $\pi=\pi_-$, with the unknowns
$\bar\ybeta\in\pi$ and $\tilde\xalpha\in\ZZ[\pi]$ such that $w_-(\bar\ybeta)=1$,
see \fullref{lem:sol41nf}. By this Lemma,
for $|m|+|n|>0$ the solutions have the form
 $
\bar\ybeta = \bar\ybeta_\ell = \bar c^\ell, \
\tilde\xalpha = \tilde\xalpha_\ell = \frac{1-\bar c^{2d}}{1+\bar c^\ell}
 $
where $d=\gcd(m,n)$, $\ell\in\ZZ$ such that $\ell\mid d$, and
$c=\aa^{m/d}\bb^{2n/d}\in\pi$, while for $m=n=0$ the solutions
have the form
 $
 \bar\ybeta = \bar\ybeta_{L,\ell} = \bar\aa^L\bar\bb^{2\ell}, \
 \tilde\xalpha = \tilde\xalpha_{L,\ell}=0
 $
where $L,\ell\in\ZZ$.
Our {\it second derived equation} will be the following equation
in the quotient
$Q=Q_-$, see~\eqref{eq:Qpm}:
\begin{equation}
\label{eqn4_2nf}
\left\{ \begin{array}{rcll}
2X - p_Q{\displaystyle\left(
\frac{1-\bar c^{\;-2d}}{1+\bar c^{\;-\ell}} \cdot Y \right)}
 \!\!\!&=&\!\!\!
 p_Q{\displaystyle\left( V +
\frac{1-\bar c^{\;-2d}}{1-\bar c^{2\ell}} \right) }
         & \mbox{if } |m|+|n|>0,\!\!\!\!\!\!\!\\
  2X\!\!\!&=& \!\!\!p_Q(V) & \mbox{if } m=n=0. \end{array}\right.
\tag{$4_2^\nf$}
\end{equation}
Here the projection $p_Q$
and the polynomial $V\in \ZZ[\pi]$ are defined as in~\eqref{eq:***}
with $\pi=\pi_-$, 
$B=B_-=\alpha\beta\alpha\beta^{-1}$,
$c=\aa^{m/d}\bb^{2n/d}$,
while the unknowns are either $(\ell,X,Y)$ with $\ell\in\ZZ$, $\ell\mid d$,
$X\in Q$, $Y \in \ZZ[\pi]$ if $|m|+|n|>0$, or $(L,\ell,X,Y)$ with
$L,\ell\in\ZZ$, $X\in Q$, $Y\in\ZZ[\pi]$ if $m=n=0$.

In the special case $m=n=0$,
the second derived equation~\eqref{eqn4_2nf} has the form $2X=p_Q(V)$ with the
unknowns $L,\ell\in\ZZ$, $X\in Q$, $Y\in\ZZ[\pi]$. As above, it admits
a solution if and only if $2\mid p_Q(V)$, moreover for $2\mid p_Q(V)$ the value
of $X$ is uniquily determined, while the unknowns $(L,\ell,Y)$ take arbitrary
values, in order to determine a solution.

\subsubsection*{Case of the equation \eqref{eqn4'}, faithful solutions}
Here $\delta=\varepsilon=-1$, and we may assume that $\vartheta=-1$ and
$\bar v=\bar \bb^{2n}$, $n\in\ZZ$, $\pi=\pi_-$, see \fullref{tbl1} and
\fullref{rem:tables}. We consider the following pair of derived equations
(corresponding to faithful solutions). The first derived equation
is~\eqref{eqn4_1f}
in $\ZZ[\pi]$,
$\pi=\pi_-$, with the unknowns $\bar\ybeta\in\pi$ and $\tilde\xalpha\in\ZZ[\pi]$
such that $w_-(\bar\ybeta)=-1$, see \fullref{lem:sol41f}.
By this Lemma,
the solutions have the form
 $
\bar\ybeta = \bar\ybeta_{L,\ell} = \bar\aa^L \bar\bb^\ell , \
\tilde\xalpha = \tilde\xalpha_{L,\ell}
= \frac{1-\bar\bb^{2n}}{1+\bar\aa^L\bar\bb^\ell }
 $
where $L,\ell\in\ZZ$ satisfy~\eqref{ell} (the latter condition in~\eqref{ell}
corresponds to the fact that a solution to be found is faithful).
Our {\it second derived equation} will be the following equation
in the quotient $Q=Q_-$, see~\eqref{eq:Qpm}:
\begin{multline}
\label{eqn4_2f}
2\varphi^L(X) - p_Q{\displaystyle\biggl(
\frac{1-\bar\bb^{-2n}}{1-\bar\bb^{-\ell}} \cdot \varphi^L(Y) \biggr)} \\
 = p_Q \biggl( {\displaystyle \varphi^L(V)
 - \frac{1-\bar\bb^{-2n}}{1-\bar\bb^2} \bar\bb \frac{1-\bar\aa^L}{1-\bar\aa}
  + \frac{1-\bar\bb^{-2n}}{1-\bar\bb^{2\ell}}
       }
       \biggr).
\tag{$4_2^\f$}
\end{multline}
Here the projection $p_Q$, the automorphism $\varphi$ of $Q$, and the
polynomial $V\in \ZZ[\pi]$ are defined as in~\eqref{eq:**}, while
the unknowns are $(L,\ell,X,Y)$ with $L,\ell\in\ZZ$ as in~\eqref{ell}, and
$X\in Q$, $Y \in \ZZ[\pi]$.

In the special case $n=0$,
the second derived equation~\eqref{eqn4_2f} has the
form $2X=p_Q(V)$ with the unknowns $L,\ell\in\ZZ$, $\ell$ odd, and $X\in Q$,
$Y\in\ZZ[\pi]$. As above, it admits
a solution if and only if $2\mid p_Q(V)$. Moreover, if $2\mid p_Q(V)$ then the
value of $X$ is uniquily determined, while the unknowns $(L,\ell,Y)$ take
arbitrary values, in order to determine a solution.

\begin{Thm} \label {thm:second}
Under the hypothesis of \fullref{thm:first}, suppose that
$v_0\in F_2$ is the representative of $\bar v\in\pi$, as in~$\eqref{eq:**}$
or~$\eqref{eq:***}$, and $(\xalpha,\ybeta)$ is a solution of one of the
equations~\eqref{eqn2'}, \eqref{eqn3'} or~\eqref{eqn4'} from \fullref{subsec:appl}, in a
``mixed'' case, see \fullref{rem:tables} and Tables~\ref{tbl1}
and~\ref{tbl2}. Let
$(\xalpha_{L,\ell},\ybeta_{L,\ell})$ be
the corresponding representative, given by
Remarks~\ref{rem:repres2}, \ref{rem:repres3}, \ref{rem:repres4f}
and~\ref{rem:repres4nf}, of the solution
$(\tilde\xalpha,\bar\ybeta)\in (\ZZ[\pi])\times\pi$ of the corresponding first
derived equation
\eqref{eqn2_1}, \eqref{eqn3_1}, \eqref{eqn4_1f} or \eqref{eqn4_1nf} (see
Lemmas~\ref{lem:sol21}, \ref{lem:sol31}, \ref{lem:sol41f}
and~\ref{lem:sol41nf})
where the subscript $L$ is not necessarily present. Let
$X\in Q$, $Y,V\in \ZZ[\pi]$
be the images of the elements
$\xalpha_{L,\ell}^{-1}\xalpha\in[N,N]$,
$\ybeta_{L,\ell}^{-1}\ybeta,\ v_0^{-1}v\in N$
under the projections $q_{N_F}\co [N,N]\to Q\approx [N,N]/[F_2,[N,N]]$ and
$q_N\co N\to (\ZZ[\pi],+) \approx N^{\ab}$, respectively:
 $$
X=q_{N_F}(\xalpha_{L,\ell}^{-1}\xalpha)\in Q, \quad
Y=q_N(\ybeta_{L,\ell}^{-1}\ybeta)\in \ZZ[\pi],\quad V=q_N(v_0^{-1}v)
\in \ZZ[\pi],
 $$
where the natural identifications $N^{\ab}\approx(\ZZ[\pi],+)$ and
$[N,N]/[F_2,[N,N]] \approx Q$ are given by~\eqref{eq:shortNab},
\eqref{eq:Nab}, and~\eqref{eq:shortQisom}, \eqref{eq:Qisom}.
Then the quadruple $(L,\ell,X,Y)$ (or the triple $(\ell,X,Y)$,
respectively) satisfies
the corresponding equation
\eqref{eqn2_2}, \eqref{eqn3_2}, \eqref{eqn4_2f} or \eqref{eqn4_2nf}, described above, called the
{\it second derived equation}.
\end{Thm}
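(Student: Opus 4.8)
The plan is to project the quadratic equation~\eqref{eqN} to the two-step nilpotent quotient $G:=F_2/[F_2,[N,N]]$ and to read off its $Q$-component, using the structure of $G$ provided by \fullref{pro:QH2}.

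First I would fix the bookkeeping. Since $(\xalpha,\ybeta)$ solves~\eqref{eqN} with $\xalpha\in N$, \fullref{thm:first} shows that $(\tilde\xalpha,\bar\ybeta):=(q_N(\xalpha),p_\pi(\ybeta))$ is a solution of the relevant first derived equation, so by the appropriate one of Lemmas~\ref{lem:sol21}, \ref{lem:sol31}, \ref{lem:sol41f} and~\ref{lem:sol41nf} there are integers $L,\ell$ in the prescribed range with $(\tilde\xalpha,\bar\ybeta)=(\tilde\xalpha_{L,\ell},\bar\ybeta_{L,\ell})$; existence of such $L,\ell$ uses the inclusion $\bar v\in\langle\bar\ybeta^2\rangle$ from \fullref{thm:class}(5). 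Fixing such $L,\ell$, the representatives of Remarks~\ref{rem:repres2}, \ref{rem:repres3}, \ref{rem:repres4f} and~\ref{rem:repres4nf} satisfy $q_N(\xalpha_{L,\ell})=\tilde\xalpha$ and $p_\pi(\ybeta_{L,\ell})=\bar\ybeta$, so $a':=\xalpha_{L,\ell}^{-1}\xalpha\in[N,N]$, $b':=\ybeta_{L,\ell}^{-1}\ybeta\in N$ and, since $v_0$ represents $\bar v$, $c':=v_0^{-1}v\in N$; put $X=q_{N_F}(a')\in Q$, $Y=q_N(b')\in\ZZ[\pi]$, $V=q_N(c')\in\ZZ[\pi]$, as in the statement.

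Next I would substitute $\xalpha=\xalpha_{L,\ell}a'$, $\ybeta=\ybeta_{L,\ell}b'$, $v=v_0c'$ into~\eqref{eqN} and pass to $G$. By \fullref{pro:QH2}, the image $\bar N\le G$ of $N$ fits into a central extension $1\to Q\to\bar N\to(\ZZ[\pi],+)\to1$ and the subgroup $Q\approx[N,N]/[F_2,[N,N]]$ is central in all of $G$, directly from the definition of $[F_2,[N,N]]$; in particular the image of $a'$ in $G$ is the central element $X$. Every element of $N$ that occurs after the substitution is a product of conjugates $B_u=uBu^{-1}$ of $B$ and of central factors; each such $B_u$ contributes $\bar u\in\pi\subset\ZZ[\pi]$ to the $N^{\ab}$-component, conjugation by any $w\in F_2$ multiplies the $N^{\ab}$-component by $\bar w\in\pi$, and interchanging two factors produces a commutator whose $Q$-class is computed by~\eqref{eq:Qisom}: for $w_1,w_2\in N$ with $q_N(w_1)=\sum_j n_j\bar u_j$ and $q_N(w_2)=\sum_k m_k\bar v_k$ one has $q_{N_F}([w_1,w_2])=p_Q\bigl(\sum_{j,k}n_jm_k\,\bar u_j^{-1}\bar v_k\bigr)$. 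Expanding both sides of~\eqref{eqN} in $G$ by repeated use of the identities $[xy,z]={}^{x}[y,z]\,[x,z]$ and $[x,yz]=[x,y]\,{}^{y}[x,z]$, and discarding every commutator lying in $[F_2,[N,N]]$ (in particular all iterated commutators of length $\ge3$ and all of $[N,[N,N]]$), the resulting identity splits into its $N^{\ab}$-component, which is the first derived equation and hence holds automatically, and its $Q$-component, which is the identity to be simplified.

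The last step is to evaluate this $Q$-identity explicitly. Here I would insert the closed forms of $\xalpha_{L,\ell}$, $\ybeta_{L,\ell}$, $v_0$ from Remarks~\ref{rem:repres2}, \ref{rem:repres3}, \ref{rem:repres4f} and~\ref{rem:repres4nf}, together with the geometric-series expressions for $\tilde\xalpha_{L,\ell}$ from Lemmas~\ref{lem:sol21}, \ref{lem:sol31}, \ref{lem:sol41f} and~\ref{lem:sol41nf}, compute the pairings of these against $Y=q_N(\ybeta_{L,\ell}^{-1}\ybeta)$ and $V=q_N(v_0^{-1}v)$ and against one another, and collect terms. In the cases of~\eqref{eqn2'} and of the faithful solutions of~\eqref{eqn4'}, where $\ybeta_{L,\ell}=c_L^\ell$ with $c_L=\bb\aa^{-L}$, the change to the coordinates in which $c_L$ becomes $\bb$ is effected by the automorphism $\varphi\co\aa\mapsto\aa$, $\bb\mapsto\bb\aa$, which fixes $B=\aa\bb\aa\bb^{-1}$ and hence descends to $N^{\ab}$ and to $Q$; this accounts for the $\varphi^{L}$ occurring in~\eqref{eqn2_2} and~\eqref{eqn4_2f}. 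Specialising to $n=0$ (respectively $m=n=0$) collapses the geometric series and yields the degenerate forms $0=p_Q(V)$ (respectively $2X=p_Q(V)$). I expect the main obstacle to be precisely this last computation: keeping exact track of which commutators survive modulo $[F_2,[N,N]]$ and assembling the surviving bilinear pairings into the displayed rational-function coefficients, carried out uniformly over all four equations and their degenerate subcases.
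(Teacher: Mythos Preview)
Your proposal is correct and follows essentially the same approach as the paper: the paper too substitutes $\xalpha=\xalpha_{L,\ell}\ttheta$, $\ybeta=\ybeta_{L,\ell}\vvarphi$, $v=v_0P_v$, reduces both sides modulo $[F_2,[N,N]]$ using commutator calculus and the centrality of $[N,N]/[F_2,[N,N]]$, and then reads off the $Q$-component via~\eqref{eq:Qisom}, treating each of the four cases separately and applying $\varphi^L$ at the end for~\eqref{eqn2_2} and~\eqref{eqn4_2f}. The paper isolates the harder parts of the explicit computation you anticipate into two preparatory lemmas (computing $\aa^L\bb\aa^L\bb^{-1}$ and $\bb^{-2n}c_L^{2n}$ as products of $B_u$'s and their $N^{\ab}$-images), which is where the rational-function coefficients $\frac{1-\bar\bb^{-2n}}{1-\bar\bb^2}\bar\bb\frac{1-\bar\aa^L}{1-\bar\aa}$ in~\eqref{eqn2_2} and~\eqref{eqn4_2f} come from.
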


\subsection{Derivation of the second derived equation} \label{subsec:der2}

Here we give a proof of \fullref{thm:second}, that is we derive the
equations \eqref{eqn2_2} and~\eqref{eqn3_2} from the equations
\eqref{eqn2'} and~\eqref{eqn3'}, respectively, and the
equations \eqref{eqn4_2f} and~\eqref{eqn4_2nf} from the equation \eqref{eqn4'}, in the ``mixed''
cases, see \fullref{subsec:appl} and \fullref{rem:tables}.

The following three technical Lemmas
will be useful for deriving the second derived
equations~\eqref{eqn2_2} and~\eqref{eqn4_2f} from the equations~\eqref{eqn2'} and~\eqref{eqn4'},
respectively.

\begin{Lem} \label{lem:xyxy}
In the free group $F_2=\langle \aa,\bb\mid\rangle$, put $B=\aa\bb\aa\bb^{-1}$
and denote $B_u=uBu^{-1}$, $u\in F_2$. Then, for any $L\in\ZZ$,
$$
\aa^L\bb\aa^L\bb^{-1}=\left\{ \begin{array}{ll}
B_{\aa^{L-1}}B_{\aa^{L-2}}\dots B_\aa B, & L\geq 0, \\
B_{\aa^{L}}^{-1}B_{\aa^{L+1}}^{-1}\dots B_{\aa^{-1}}^{-1}, & L<0.
 \end{array} \right.
$$
If $N=\llangle B\rrangle $ and $\pi=\pi_-=F_2/N$ then, under the
projection $q_N\co N\to$\break $(\ZZ[\pi],+) \approx N^{\ab}=N/[N,N]$,
see~\eqref{eq:shortNab}, \eqref{eq:Nab}, the element $\aa^L \bb \aa^L
\bb^{-1}$ is mapped to\break $q_N (\aa^L\bb \aa^L\bb^{-1}) =
\frac{1-\bar \aa^L}{1-\bar \aa}$.
\end{Lem}

\begin{proof}
Let us calculate $\aa^L\bb\aa^L\bb^{-1}$. For $L\ge0$ we prove the
formula by induction. For $L=0,1$ the formula is obviously true. From the
formula for $L\ge1$ we get the formula for $L+1$ as follows:
$$
\aa^{L+1}\bb\aa^{L+1}\bb^{-1} = \aa (\aa^L\bb\aa^L\bb^{-1})\aa^{-1}
 \aa\bb\aa\bb^{-1}
= \aa (B_{\aa^{L-1}}B_{\aa^{L-2}}\dots B_\aa B) \aa^{-1} \cdot B
$$
$$
= (B_{\aa^L}B_{\aa^{L-1}}\dots B_{\aa^2} B_\aa) B
= B_{\aa^L}B_{\aa^{L-1}}\dots B_\aa B.
$$
Using the above formula, we get the formula for $L<0$:
$$
\aa^L\bb\aa^L\bb^{-1}
= \aa^L (\aa^{-L}\bb\aa^{-L}\bb^{-1})^{-1} \aa^{-L}
= \aa^L (B_{\aa^{-L-1}}B_{\aa^{-L-2}}\dots B_\aa B)^{-1} \aa^{-L}
$$
$$
= \aa^L (B^{-1} B^{-1}_\aa \dots B^{-1}_{\aa^{-L-2}} B^{-1}_{\aa^{-L-1}} )\aa^{-L}
= B^{-1}_{\aa^L} B^{-1}_{\aa^{L+1}} \dots B^{-1}_{\aa^{-2}} B^{-1}_{\aa^{-1}}.
$$
In the abelianised group $N$, which is identified with $(\ZZ[\pi_-],+)$, see
\fullref{pro:Nab}, we have
$$
q_{N}(\aa^L\bb\aa^L\bb^{-1})
= q_{N}(B_{\aa^{L-1}}B_{\aa^{L-2}}\dots B_\aa B)
= \bar \aa^{L-1} + \bar \aa^{L-2} + \ldots + \bar \aa + 1
= \frac{1-\bar\aa^L}{1-\bar \aa}
$$
if $L\ge0$, and
$$
q_{N}(\aa^L\bb\aa^L\bb^{-1})
= q_{N}(B_{\aa^{L}}^{-1}B_{\aa^{L+1}}^{-1}\dots B_{\aa^{-1}}^{-1})
= - \bar \aa^L - \bar \aa^{L+1} - \ldots - \bar \aa^{\;-1}
= \frac{1-\bar \aa^L}{1-\bar \aa}
$$
if $L<0$.
\end{proof}

\begin{Rem} \label{rem:xyxy}
Under the assumptions of \fullref{lem:xyxy}, one can prove the following
generalization of the formulae from this Lemma, for arbitrary $L,\ell\in\ZZ$
where $\ell$ is odd:
$$
\aa^L\bb^\ell \aa^L\bb^{-\ell} {=} \begin{cases}
\prod_{k=1}^{L} \Bigl[ \Bigl(
 \prod_{i=1}^{(\ell-1)/2} B^{-1}_{\aa^{L+1-k}\bb^{\ell-2i}} \Bigr)
 \prod_{j=0}^{(\ell-1)/2}\! B_{\aa^{L-k}\bb^{2j}} \Bigr], & \!\!\!\!\! \ell>0,
 \\
\prod_{k=1}^{L} \Bigl[ \Bigr(
 \prod_{i=0}^{(-\ell-1)/2} B_{\aa^{L+1-k}\bb^{\ell+2i}} \Bigr)
 \prod_{j=1}^{(-\ell-1)/2}\! B^{-1}_{\aa^{L-k}\bb^{-2j}} \Bigr], & \!\!\!\! \ell<0
\end{cases}
$$
if $L\ge 0$, and
$$
\aa^L\bb^\ell \aa^L\bb^{-\ell} = \begin{cases}
\!\prod_{k=L}^{-1} \left[ \left( \prod_{j=(1-\ell)/2}^{0}
B^{-1}_{\aa^{k}\bb^{-2j}} \right)
 \prod_{i=(1-\ell)/2}^{-1}B_{\aa^{k+1}\bb^{\ell+2i}} \right], & \ell >0,
 \smallskip
 \\
\!\prod_{k=L}^{-1} \left[ \left( \prod_{j=(1+\ell)/2}^{-1} B_{\aa^{k}\bb^{2j}} \right)
 \prod_{i=(1+\ell)/2}^{0}B^{-1}_{\aa^{k+1}\bb^{\ell-2i}} \right], & \ell<0
\end{cases}
$$
if $L<0$. Observe that the formulae for $L<0$ can be easily obtained from the
formulae for $L>0$ via the identity $\aa^L\bb^\ell \aa^L\bb^{-\ell} =
\aa^L(\aa^{-L}\bb^\ell \aa^{-L}\bb^{-\ell})^{-1}\aa^{-L}$.
The above formulae (for $L>0$) can be proved either by straightforward
calculations, or geometrically, by identifying the subgroup $N=\llangle B\rrangle $ with the
fundamental group of a suitable covering of the punctured Klein bottle, and
interpreting elements of $N$ as based loops on this covering, considered up to
the based homotopy. In more detail, we consider a punctured Klein bottle
$K^*=K\setminus \Int D$ with base point $P\in\partial D$, where $K$ is the
Klein bottle, and $D\subset K$ a closed disk. We interprete $K$ as the quotient
of the Euclidean plane $\tilde K$ by the free action of the group
$\pi=F_2/N$ on $\tilde K$ by isometries of the plane, in a usual way.
We can also identify $\pi_1(K,P)=\pi$, $\pi_1(K^*,P)=F_2$, and the element
$B=\aa\bb\aa\bb^{-1}\in F_2$ with the homotopy class of the (suitably
oriented) boundary circle $\partial K^*$. Consider the covering $\tilde K^*$
of $K^*$ corresponding to the subgroup $N=\llangle B\rrangle $. It is a punctured plane with
infinitely many punctures, moreover the inclusion $K^*\hookrightarrow K$ lifts
to an inclusion $\tilde K^*\hookrightarrow\tilde K$.
Let us consider a based loop $\gamma$ on $K^*$, whose homotopy class equals
$[\gamma]=\aa^L\bb^\ell \aa^L\bb^{-\ell}\in F_2$. Since $[\gamma]\in N$, this
loop lifts to the covering $\tilde K^*$.
The obtained based loop $\tilde \gamma$ on $\tilde K^*$ can be considered as a
rectangle of ``width'' $L$ and ``height'' $\ell$ on the plane $\tilde K$.
Representing the elements $B_u$, $u\in F_2$, by suitable based loops on
$\tilde K^*$, one can decompose the element $[\tilde \gamma]\in N$ into the
product of $B_u$, $u\in F_2$, in many different ways. One can check that
the above formulae give one of the ways for such a decomposition.
\end{Rem}

\begin{Lem} \label{lem:yc}
Suppose $n,L\in\ZZ$, $n\ne0$, $c_L=\bb\aa^{-L}$, $B=\aa\bb\aa\bb^{-1}$. Then
$$
\bb^{-2n}c_L^{2n}
= \begin{cases}
\prod_{j=0}^{n-1} \bb^{1-2n+2j} (\aa^{-L}\bb\aa^{-L}\bb^{-1}) \bb^{2n-2j-1},
& n>0, \\
\prod_{j=1}^{-n} \bb^{1-2n-2j} (\aa^{-L}\bb\aa^{-L}\bb^{-1})^{-1} \bb^{2n+2j-1},
& n<0 , \end{cases}
 $$
thus the element $\bb^{-2n}c_L^{2n}\in F_2$ belongs to the subgroup $N=\llangle B\rrangle $.
If $\pi=\pi_-=F_2/N$ then, under the projection
$q_N\co N\to (\ZZ[\pi],+) \approx N^{\ab}=N/[N,N]$, see~\eqref{eq:shortNab},
\eqref{eq:Nab},
the element $\bb^{-2n}c_L^{2n}$ is mapped to $q_N (\bb^{-2n}c_L^{2n})
=-\bar\bb \frac{1-\bar \bb^{-2n}}{1-\bar \bb^2}
    \cdot \frac{1-\bar \aa^{\;-L}}{1-\bar \aa}$.
\end{Lem}

\begin{proof}
Suppose $n>0$. Then
\begin{align*}
\bb^{-2n}c_L^{2n}
&= \bb^{1-2n}{\cdot}(\aa^{-L}\bb\aa^{-L}\bb^{-1}){\cdot}
\bb^2(\aa^{-L}\bb\aa^{-L}\bb^{-1})\bb^{-2}{\cdot}
\bb^4(\aa^{-L}\bb\aa^{-L}\bb^{-1})\bb^{-4} \\
&\quad\cdots
\bb^{2n-4}(\aa^{-L}\bb\aa^{-L}\bb^{-1})\bb^{4-2n}\cdot
\bb^{2n-2}(\aa^{-L}\bb\aa^{-L}\bb^{-1})\bb^{2-2n}\cdot \bb^{2n-1} \\
&= \bb^{1-2n}\cdot \left(
\prod_{j=0}^{n-1} \bb^{2j} (\aa^{-L}\bb\aa^{-L}\bb^{-1}) \bb^{-2j}\right)
\cdot \bb^{2n-1} \\
&= \prod_{j=0}^{n-1} \bb^{1-2n+2j} (\aa^{-L}\bb\aa^{-L}\bb^{-1}) \bb^{2n-2j-1}\in N
\end{align*}
by \fullref{lem:xyxy}. In the abelianised group $N$, which is identified
with $(\ZZ[\pi_-],+)$, see \fullref{pro:Nab}, we obtain
$$q_{N}(\bb^{-2n}c_L^{2n})
= \bar\bb^{1-2n}\cdot \sum_{j=0}^{n-1} \bar\bb^{2j}
  q_N(\aa^{-L}\bb\aa^{-L}\bb^{-1})$$
which, by \fullref{lem:xyxy}, equals
$$
\bar\bb^{1-2n}\cdot \sum_{j=0}^{n-1} \bar\bb^{2j} \frac{1-\bar \aa^{\;-L}}{1-\bar \aa}
= \bar\bb^{1-2n}\cdot \frac{1-\bar \bb^{2n}}{1-\bar \bb^2} \cdot
\frac{1-\bar \aa^{\;-L}}{1-\bar \aa}
=-\bar\bb\frac{1-\bar \bb^{-2n}}{1-\bar \bb^2} \cdot \frac{1-\bar \aa^{\;-L}}{1-\bar \aa}.
 $$
Suppose $n<0$. Then we have
\begin{align*}
\bb^{-2n}c_L^{2n} &= \bb^{-2n}(\bb\aa^{-L})^{2n} =
\bb^{-2n}(\aa^L\bb^{-1})^{-2n} \\
&=
\bb^{1-2n}{\cdot}\bb^{-2}(\bb\aa^{L}\bb^{-1}\aa^{L})\bb^2{\cdot}
\bb^{-4}(\bb\aa^{L}\bb^{-1}\aa^{L})\bb^4{\cdot}
\bb^{-6}(\bb\aa^{L}\bb^{-1}\aa^{L})\bb^6 \\
&\quad\cdots
\bb^{2n+2}(\bb\aa^{L}\bb^{-1}\aa^{L})\bb^{-2n-2}\cdot
\bb^{2n}(\bb\aa^{L}\bb^{-1}\aa^{L})\bb^{-2n}\cdot \bb^{2n-1} \\
&= \bb^{1-2n}\cdot \left(
\prod_{j=1}^{-n} \bb^{-2j} (\aa^{-L}\bb\aa^{-L}\bb^{-1})^{-1} \bb^{2j}\right)
\cdot \bb^{2n-1} \\
&=\prod_{j=1}^{-n} \bb^{1-2n-2j} (\aa^{-L}\bb\aa^{-L}\bb^{-1})^{-1}
\bb^{2n+2j-1} \in N
\end{align*}
by \fullref{lem:xyxy}. In the abelianised group $N$, which is identified
with $(\ZZ[\pi_-],+)$, see \fullref{pro:Nab}, we obtain
 $$
q_{N}(\bb^{-2n}c_L^{2n})
= - \bar\bb^{1-2n}\cdot \sum_{j=1}^{-n} \bar\bb^{-2j} q_N(\aa^{\;-L}\bb\aa^{-L}\bb^{-1})
 $$
which, by \fullref{lem:xyxy}, equals
$$- \bar\bb^{1-2n}\cdot \sum_{j=1}^{-n} \bar\bb^{-2j} \frac{1-\bar \aa^{\;-L}}{1-\bar \aa}
= \bar\bb^{1-2n}\cdot \frac{1-\bar \bb^{2n}}{1-\bar \bb^2}
  \cdot \frac{1-\bar \aa^{\;-L}}{1-\bar \aa}
=-\bar\bb \frac{1-\bar \bb^{-2n}}{1-\bar \bb^2} \cdot \frac{1-\bar \aa^{\;-L}}{1-\bar \aa}.
\proved
 $$
\end{proof}

\subsubsection*{Derivation of \eqref{eqn3_2}}
Here $B=\aa\bb\aa^{-1}\bb^{-1}$, $N=\llangle  B\rrangle $. As in \fullref{lem:sol31}, we
assume that $\bar v=\bar\aa^{2m}\bar\bb^{2n}$, $m,n\in\ZZ$.

Suppose $|m|+|n|>0$, thus $v=c^{2d}P_v$ where $c=\aa^{m/d}\bb^{n/d}$,
$d=\gcd(m,n)$, $P_v\in N$, thus
$P_v=\prod B_{v_i}^{n_i}=\prod_{i=1}^r B_{v_i}^{n_i}$, see~\eqref{eq:***}.
It follows from \fullref{lem:sol31} that any
solution $(\xalpha,\ybeta)$ of~\eqref{eqn3'} has the form
$\xalpha=\xalpha_\ell\ttheta$, $\ybeta=\ybeta_\ell\vvarphi$,
for some $\ell\in\ZZ$ with $\ell\mid d$, $\ttheta\in [N,N]$, and $\vvarphi\in N$,
where $\xalpha_\ell,\ybeta_\ell$ are given by \fullref{rem:repres3}.
The equation \eqref{eqn3'} has the form
 $$
\xalpha\ybeta\xalpha\ybeta^{-1} = c^{2d} P_v \ B^{-1} \ P_v^{-1} c^{-2d} \ B.
 $$
Thus, the equation has the following form in the new unknowns
$\ell,\ttheta,\vvarphi$:
\begin{equation}\label{eq:*}
\xalpha_\ell \ttheta \ \ybeta_\ell \vvarphi \ \xalpha_\ell \ttheta \ \vvarphi^{-1} \ybeta_\ell^{-1}
= c^{2d} P_v \ B^{-1} \ P_v^{-1} c^{-2d} \ B.
\end{equation}
We will start by analyzing both sides of this equality modulo
$[F_2,[N,N]]$, and we will complete by using the
presentation~\eqref{eq:Qisom} of $[N,N]/[F_2,[N,N]]$.  We shall represent
elements of $N/[F_2,[N,N]]$ by elements of $N$, identified under the
congruence relation $g_1\equiv g_2$ {\it modulo} $[F_2,[N,N]]$, and
shall write $g_1\equiv g_2$ whenever $g_1g_2^{-1}\in [F_2,[N,N]]$.

The right-hand side of~\eqref{eq:*} modulo $[F_2,[N,N]]$ equals
\begin{align*}
c^{2d} P_v \ B^{-1} \ P_v^{-1} c^{-2d} \ B
&= c^{2d} [P_v, B^{-1}] B^{-1} c^{-2d} B \\
&\equiv c^{2d} B^{-1} c^{-2d} B [P_v, B^{-1}]
= B^{-1}_{c^{2d}} B [P_v, B^{-1}] .
\end{align*}
The left-hand side of~\eqref{eq:*} modulo $[F_2,[N,N]]$ equals
\begin{equation}\label{eq:LHS}
\ttheta^2 \xalpha_\ell \ybeta_\ell \xalpha_\ell \ybeta_\ell^{-1} \cdot
\ybeta_\ell[\xalpha_\ell^{-1},\vvarphi]\ybeta_\ell^{-1}
\equiv
\ttheta^2 \xalpha_\ell \ybeta_\ell \xalpha_\ell \ybeta_\ell^{-1}
[\xalpha_\ell^{-1},\vvarphi],
\end{equation}
since the elements $\ttheta$, $[\xalpha_\ell^{-1},\vvarphi]$ belong to $[N,N]$
and, hence, they commute with any element of $F_2$ in the quotient
$F_2/[F_2,[N,N]]$.
Let us calculate $\xalpha_\ell \ybeta_\ell \xalpha_\ell \ybeta_\ell^{-1}$ in
the quotient $F_2/[F_2,[N,N]]$.
We have, by \fullref{rem:repres3},
\begin{align}
\notag \xalpha_\ell \ybeta_\ell \xalpha_\ell \ybeta_\ell^{-1}
&= \bigl(
B_{c^{2d-2\ell}}B_{c^{2d-4\ell}} \ldots B_{c^{2\ell}}B \cdot
B^{-1}_{c^\ell}B^{-1}_{c^{3\ell}}\ldots B^{-1}_{c^{2d-3\ell}}B^{-1}_{c^{2d-\ell}}
\bigr) \cdot c^\ell \\
\notag &\quad\cdot \bigl(
B_{c^{2d-2\ell}}B_{c^{2d-4\ell}} \ldots B_{c^{2\ell}}B \cdot
B^{-1}_{c^\ell}B^{-1}_{c^{3\ell}}\ldots B^{-1}_{c^{2d-3\ell}}B^{-1}_{c^{2d-\ell}}
\bigr) \cdot c^{-\ell} \\
\notag &= B_{c^{2d-2\ell}}B_{c^{2d-4\ell}} \ldots B_{c^{2\ell}}B
\cdot
B^{-1}_{c^{2\ell}}B^{-1}_{c^{4\ell}}\ldots
  B^{-1}_{c^{2d-2\ell}}B^{-1}_{c^{2d}} \\
\label{eq:LHS+}
&\equiv
B^{-1}_{c^{2d}}B \cdot \prod_{j=1}^{\frac d\ell} [B,B^{-1}_{c^{2j\ell}}]
\qquad \mbox{if} \quad \ell>0; \\
\notag \xalpha_\ell \ybeta_\ell \xalpha_\ell \ybeta_\ell^{-1}
&= \bigl(
B^{-1}_{c^{2d}}B^{-1}_{c^{2d+2\ell}} \dots B^{-1}_{c^{-4\ell}}B^{-1}_{c^{-2\ell}}
\cdot B_{c^{-\ell}}B_{c^{-3\ell}}\ldots B_{c^{2d+3\ell}} B_{c^{2d+\ell}}
\bigr)\cdot c^\ell \\
\notag &\quad\cdot \bigl(
B^{-1}_{c^{2d}}B^{-1}_{c^{2d+2\ell}} \dots B^{-1}_{c^{-4\ell}}B^{-1}_{c^{-2\ell}}
\cdot B_{c^{-\ell}}B_{c^{-3\ell}}\ldots B_{c^{2d+3\ell}} B_{c^{2d+\ell}}
\bigr)\cdot c^{-\ell} \\
\notag &=
B^{-1}_{c^{2d}}B^{-1}_{c^{2d+2\ell}} \dots B^{-1}_{c^{-4\ell}}B^{-1}_{c^{-2\ell}}
\cdot
BB_{c^{-2\ell}}\ldots B_{c^{2d+4\ell}} B_{c^{2d+2\ell}} \\
\label{eq:LHS-}
&\equiv
B^{-1}_{c^{2d}}B \cdot \prod_{j=1}^{-\frac d{\ell}-1} [B^{-1}_{c^{-2j\ell}},B]
\qquad \mbox{if} \quad \ell<0.
\end{align}
Therefore, after cancelling the common factor $B^{-1}_{c^{2d}} B$ from the both
sides, the equation has the following form in the quotient $F_2/[F_2,[N,N]]$:
\begin{align*}
\ttheta^2
\cdot \left(\prod_{j=1}^{\frac d\ell} [B,B^{-1}_{c^{2j\ell}}]\right)
\cdot [\xalpha_\ell^{-1},\vvarphi]
&\equiv
[P_v, B^{-1}] &\mbox{for}\quad \ell>0, \\
\ttheta^2
\cdot \left(\prod_{j=1}^{-\frac d{\ell}-1} [B^{-1}_{c^{-2j\ell}},B]\right)
\cdot [\xalpha_\ell^{-1},\vvarphi]
&\equiv
[P_v, B^{-1}] & \mbox{for}\quad \ell<0.
\end{align*}
Both sides of the latter equation belong to $N_1=[N,N]$.
After identification of $N_1/[F_2,N_1]$ with $Q=(\ZZ[\pi\setminus\{1\}])/\sim$,
see \fullref{pro:QH2}, and denoting
$X=q_{N_F}(\ttheta)\in Q \approx N_1/[F_2,N_1]$,
$Y=q_N(\vvarphi)\in \ZZ[\pi]$, $V=q_N(P_v)\in \ZZ[\pi]$,
we get, using~\eqref{eq:Qisom} and
\fullref{lem:sol31}, the equation
\begin{equation}\label{eq:32}
\left\{ \begin{array}{rl}
2X + {\displaystyle p_Q\left( \sum_{j=1}^{\frac d\ell} \bar c^{\;-2j\ell}
- \frac {1-\bar c^{\;-2d}}{1+\bar c^{\;-\ell}} \cdot Y \right)}
= p_Q(V) , & \ell>0, \\
2X + {\displaystyle p_Q\left( - \sum_{j=0}^{-\frac d\ell-1} \bar c^{2j\ell}
- \frac {1-\bar c^{\;-2d}}{1+\bar c^{\;-\ell}} \cdot Y \right)}
= p_Q(V) , & \ell<0, \end{array}\right.
\end{equation}
which coincides with the desired equation~\eqref{eqn3_2} for $|m|+|n|>0$.

Consider the case $m=n=0$, that is $\bar v=1$. We have $v=P_v\in N$ where
$P_v=\prod B_{v_i}^{n_i}=\prod_{i=1}^r B_{v_i}^{n_i}$.
It follows from \fullref{lem:sol31} that any
solution $(\xalpha,\ybeta)$ of~\eqref{eqn3'} has the form
$\xalpha=\xalpha_{L,\ell}\ttheta$, $\ybeta=\ybeta_{L,\ell}\vvarphi$
with $\xalpha_{L,\ell}=1$, $\ybeta_{L,\ell}=\aa^L\bb^\ell$,
for some $L,\ell\in\ZZ$, $\ttheta\in [N,N]$, and $\vvarphi\in N$, see
\fullref{rem:repres3}.
Similarly to above, we obtain the equation~\eqref{eq:*} where
$\xalpha_\ell,\ybeta_\ell,c^{2d}$ are replaced by
$\xalpha_{L,\ell}=1$, $\ybeta_{L,\ell}$, 1, respectively.
It follows from $\xalpha_{L,\ell}=1$ that
$\xalpha_{L,\ell} \ybeta_{L,\ell} \xalpha_{L,\ell} \ybeta_{L,\ell}^{-1}=1$
and $[\xalpha_{L,\ell}^{-1},\vvarphi]=1$,
hence the left-hand side modulo $[F_2,[N,N]]$ equals $\ttheta^2$.
As above, the right-hand side modulo $[F_2,[N,N]]$ equals $[P_v, B^{-1}]$.
After identification of $N_1/[F_2,N_1]$ with $Q=\ZZ[\pi\setminus\{1\}]/\sim$,
and denoting $X=q_{N_F}(\ttheta)\in Q \approx N_1/[F_2,N_1]$,
$Y=q_N(\vvarphi)\in \ZZ[\pi]$, $V=q_N(P_v)\in \ZZ[\pi]$,
we get, using~\eqref{eq:Qisom},
the desired equation
 $$
2X = p_Q(V).
 $$
This finishes the derivation of~\eqref{eqn3_2} from~\eqref{eqn3'}.

\subsubsection*{Derivation of \eqref{eqn4_2nf}}
Here $B=\aa\bb\aa\bb^{-1}$, $N=\llangle  B\rrangle $. As in \fullref{lem:sol41nf}, we
assume that $\bar v=\bar\aa^{2m}\bar\bb^{4n}$, $m,n\in\ZZ$.

Suppose $|m|+|n|>0$, thus $v=c^{2d}P_v$ where $c=\aa^{m/d}\bb^{2n/d}$,
$d=\gcd(m,n)$, $P_v\in N$, thus
$P_v=\prod B_{v_i}^{n_i}=\prod_{i=1}^r B_{v_i}^{n_i}$.
It follows from \fullref{lem:sol41nf} that any non-faithful solution
$(\xalpha,\ybeta)$ of~\eqref{eqn4'} has the form
$\xalpha=\xalpha_\ell\ttheta$, $\ybeta=\ybeta_\ell\vvarphi$
for some $\ell\in\ZZ$ with $\ell\mid d$, $\ttheta\in [N,N]$, and $\vvarphi\in N$,
where $\xalpha_\ell,\ybeta_\ell$ are given by \fullref{rem:repres4nf}.

The rest of the derivation is similar to that of~\eqref{eqn3_2}.

\subsubsection*{Derivation of \eqref{eqn2_2}}
Here $B=\aa\bb\aa\bb^{-1}$, $N=\llangle  B\rrangle $. As in \fullref{lem:sol21}, we
assume that
$v=\bb^{2n}P_v$, where $P_v=\prod B_{v_i}^{n_i}$, $n,n_i\in\ZZ$, $v_i\in F_2$.
By this Lemma,
any non-faithful solution $(\xalpha,\ybeta)$ of~\eqref{eqn2'}
has the form $\xalpha=\xalpha_{L,\ell}\ttheta$, $\ybeta=\ybeta_{L,\ell}\vvarphi$
for $L,\ell\in\ZZ$ as in~\eqref{ell}, $\ttheta\in [N,N]$, and $\vvarphi\in N$,
where $\xalpha_{L,\ell},\ybeta_{L,\ell}$ are given by
\fullref{rem:repres2}. The equation~\eqref{eqn2'} has the form
 $$
\xalpha\ybeta\xalpha^{-1}\ybeta^{-1}
= \bb^{2n} P_v \ B^{-1} \ P_v^{-1} \bb^{-2n} \ B.
 $$
Thus, the equation has the following form in the new unknowns
$L,\ell,\ttheta,\vvarphi$:
 $$
\xalpha_{L,\ell} \ttheta \ \ybeta_{L,\ell} \vvarphi \
\ttheta^{-1} \xalpha_{L,\ell}^{-1} \ \vvarphi^{-1} \ybeta_{L,\ell}^{-1}
= \bb^{2n} P_v \ B^{-1} \ P_v^{-1} \bb^{-2n} \ B.
 $$
As above, we will analyze both sides of this equality modulo $[F_2,[N,N]]$, and
will write $g_1\equiv g_2$ whenever $g_1g_2^{-1}\in [F_2,[N,N]]$.

The right-hand side modulo $[F_2,[N,N]]$ equals
\begin{align}
\notag \bb^{2n} P_v \ B^{-1} \ P_v^{-1} \bb^{-2n} \ B
&= \bb^{2n} [P_v, B^{-1}] B^{-1} \bb^{-2n} B \\
\label{eq:RHS}
\equiv \bb^{2n} B^{-1} \bb^{-2n} B [P_v, B^{-1}]
&= B^{-1}_{\bb^{2n}} B [P_v, B^{-1}] .
\end{align}
The left-hand side modulo $[F_2,[N,N]]$ equals
$$
[\xalpha_{L,\ell}, \ybeta_{L,\ell}] \cdot
\ybeta_{L,\ell}[\xalpha_{L,\ell},\vvarphi]\ybeta_{L,\ell}^{-1}
\equiv
[\xalpha_{L,\ell}, \ybeta_{L,\ell}] \cdot [\xalpha_{L,\ell},\vvarphi],
$$
since the elements $\ttheta$, $[\xalpha_{L,\ell},\vvarphi]$ belong to $[N,N]$
and, hence, they commute with any element of $F_2$ in the quotient
$F_2/[F_2,[N,N]]$.
Let us calculate $[\xalpha_{L,\ell}, \ybeta_{L,\ell}]$ in $F_2/[[N,N],F_2]$.
Denote $c_L=\bb\aa^{-L}$, thus
$\tilde\xalpha_{L,\ell}=\smash{\frac{1-\bar c_L^{2n}}{1-\bar c_L^\ell}}$
and $\bar\ybeta_{L,\ell}=\bar c_L^\ell$. For $n/\ell\ge0$
we have, by \fullref{rem:repres2},
\begin{align*}
[\xalpha_{L,\ell}, \ybeta_{L,\ell}]
&= \bigl(
B_{c_L^{2n-\ell}}B_{c_L^{2n-2\ell}} \ldots B_{c_L^\ell}B
\bigr)\cdot c_L^\ell \cdot \bigl(
B^{-1}B^{-1}_{c_L^\ell}\ldots B^{-1}_{c_L^{2n-2\ell}}B^{-1}_{c_L^{2n-\ell}}
\bigr) \cdot c_L^{-\ell} \\
&= B_{c_L^{2n-\ell}}B_{c_L^{2n-2\ell}} \ldots B_{c_L^\ell}B
\cdot
B^{-1}_{c_L^\ell}B^{-1}_{c_L^{2\ell}}\ldots
B^{-1}_{c_L^{2n-\ell}}B^{-1}_{c_L^{2n}} \\
&\equiv
B^{-1}_{c_L^{2n}}B \cdot \prod_{j=1}^{2\frac n\ell} [B,B^{-1}_{c_L^{j\ell}}] ,
\end{align*}
while for $n/\ell<0$ we have, by \fullref{rem:repres2},
\begin{align*}
[\xalpha_{L,\ell}, \ybeta_{L,\ell}]
&= \bigl(
B^{-1}_{c_L^{2n}}B^{-1}_{c_L^{2n+\ell}} \ldots B^{-1}_{c_L^{-2\ell}}B^{-1}_{c_L^{-\ell}}
\bigr)\cdot c_L^\ell \cdot \bigl(
B_{c_L^{-\ell}}B_{c_L^{-2\ell}}\ldots B_{c_L^{2n+\ell}} B_{c_L^{2n}}
\bigr) \cdot c_L^{-\ell} \\
&= B^{-1}_{c_L^{2n}}B^{-1}_{c_L^{2n+\ell}} \ldots
B^{-1}_{c_L^{-2\ell}}B^{-1}_{c_L^{-\ell}}
\cdot
BB_{c_L^{-\ell}}\ldots B_{c_L^{2n+2\ell}} B_{c_L^{2n+\ell}} \\
&\equiv
B^{-1}_{c_L^{2n}}B \cdot\prod_{j=1}^{-2\frac n\ell-1} [B^{-1}_{c_L^{-j\ell}},B].
\end{align*}
Therefore, after multiplying the both sides by $B^{-1}B_{c_L^{2n}}$,
the equation has the following form in the quotient $F_2/[F_2,[N,N]]$:
\begin{align*}
\Biggl(\prod_{j=1}^{2\frac n\ell} [B,B^{-1}_{c_L^{j\ell}}] \Biggr)
\cdot [\xalpha_{L,\ell},\vvarphi]
&\equiv
B^{-1}_{\bb^{2n}} B_{c_L^{2n}} \cdot [P_v, B^{-1}]
& \mbox{for}\quad n/\ell\ge0, \\
\Biggl(\prod_{j=1}^{-2\frac n\ell-1} [B^{-1}_{c_L^{-j\ell}},B]\Biggr)
\cdot [\xalpha_{L,\ell},\vvarphi]
&\equiv
B^{-1}_{\bb^{2n}} B_{c_L^{2n}} \cdot [P_v, B^{-1}]
& \mbox{for}\quad n/\ell<0.
\end{align*}
Observe that
\begin{equation}\label{eq:BB}
B^{-1}_{\bb^{2n}} B_{c_L^{2n}} = \bb^{2n}[B^{-1},\bb^{-2n}c_L^{2n}] \bb^{-2n}
\equiv [B^{-1},\bb^{-2n}c_L^{2n}]\in [N,N],
\end{equation}
due to \fullref{lem:yc}.
In particular, both sides of the obtained equation belong to $N_1=[N,N]$.
After identification of $N_1/[F_2,N_1]$ with $Q=\ZZ[\pi\setminus\{1\}]/\sim$,
see~\eqref{eq:shortQisom}, \eqref{eq:Qisom},
and denoting $X=q_{N_F}(\ttheta)\in Q \approx N_1/[F_2,N_1]$,
$Y=q_N(\vvarphi)\in \ZZ[\pi]$, $V=q_N(P_v)\in \ZZ[\pi]$,
we get, using \fullref{lem:yc} and~\eqref{eq:Qisom}, the equation
\begin{equation}\label{eq:22}
{\displaystyle p_Q \left( - \frac {1-\bar c_L^{\;-2n}}{1-\bar c_L^{\ell}}
+ \frac {1-\bar c_L^{\;-2n}}{1-\bar c_L^{\;-\ell}} \cdot Y \right)}
= {\displaystyle p_Q\left( V
+ \bar\bb\frac{1-\bar\bb^{-2n}}{1-\bar\bb^2}
\cdot \frac{1-\bar\aa^{\;-L}}{1-\bar\aa}\right)}.
\end{equation}
Since the automorphism $\varphi^L$ sends
$\bar c_L=\bar\aa^L\bar\bb\mapsto \bar\bb$,
$\bar\bb \mapsto \bar\bb \bar\aa^L$,
and leaves fixed $\bar\aa$ and $\bar\bb^2$, we obtain, after
applying the automorphism $\varphi^L$ to both sides of the latter equation, the
desired equation~\eqref{eqn2_2}.

\subsubsection*{Derivation of \eqref{eqn4_2f}}
Here $B=\aa\bb\aa\bb^{-1}$, $N=\llangle  B\rrangle $. As in \fullref{lem:sol41f}, we
assume that
$v=\bb^{2n}P_v$, where $P_v=\prod B_{v_i}^{n_i}$, $n\in\ZZ$. By this Lemma,
any faithful solution $(\xalpha,\ybeta)$ of~\eqref{eqn4'}
has the form $\xalpha=\xalpha_{L,\ell}\ttheta$, $\ybeta=\ybeta_{L,\ell}\vvarphi$
for $L,\ell\in\ZZ$ as in~\eqref{ell}, $\ttheta\in [N,N]$, and $\vvarphi\in N$,
where $\xalpha_{L,\ell},\ybeta_{L,\ell}$ are given by
\fullref{rem:repres4f}. The equation~\eqref{eqn4'} has the left-hand side
similar to that of~\eqref{eqn3'} and the right-hand side as in~\eqref{eqn2'}:
 $$
\xalpha\ybeta\xalpha\ybeta^{-1}
= \bb^{2n} P_v \ B^{-1} \ P_v^{-1} \bb^{-2n} \ B.
 $$
Thus, the equation has the following form in the new unknowns
$L,\ell,\ttheta,\vvarphi$:
 $$
\xalpha_{L,\ell} \ttheta \ \ybeta_{L,\ell} \vvarphi \
\xalpha_{L,\ell} \ttheta \ \vvarphi^{-1} \ybeta_{L,\ell}^{-1}
= \bb^{2n} P_v \ B^{-1} \ P_v^{-1} \bb^{-2n} \ B.
 $$
As above, we will analyze both sides of this equality modulo $[F_2,[N,N]]$, and
will write $g_1\equiv g_2$ whenever $g_1g_2^{-1}\in [F_2,[N,N]]$.

As in~\eqref{eq:RHS}, the right-hand side modulo $[F_2,[N,N]]$
equals $B^{-1}_{\bb^{2n}} B [P_v, B^{-1}]$. Similarly to~\eqref{eq:LHS}, one shows that the left-hand side modulo $[F_2,[N,N]]$
is equal to $\ttheta^2 \xalpha_{L,\ell} \ybeta_{L,\ell} \xalpha_{L,\ell}
\ybeta_{L,\ell}^{-1} [\xalpha_{L,\ell}^{-1},\vvarphi]$. Moreover,
using \fullref{rem:repres4f}, we have, similarly to~\eqref{eq:LHS+}
and~\eqref{eq:LHS-},
 $$
\xalpha_{L,\ell} \ybeta_{L,\ell} \xalpha_{L,\ell} \ybeta_{L,\ell}^{-1}
\equiv \begin{cases}
B^{-1}_{c_L^{2n}}B \cdot \prod_{j=1}^{\frac n\ell} [B,B^{-1}_{c_L^{2j\ell}}],
& n/\ell\ge0, \\ \displaystyle
B^{-1}_{c_L^{2n}}B \cdot\prod_{j=1}^{-\frac n\ell-1} [B^{-1}_{c_L^{-2j\ell}},B],
& n/\ell<0. \end{cases}
 $$
Therefore, after multiplying the both sides by $B^{-1}B_{c_L^{2n}}$,
the equation has the following form in the quotient $F_2/[F_2,[N,N]]$:
\begin{align*}
\ttheta^2 \cdot
\Biggl(\prod_{j=1}^{\frac n\ell} [B,B^{-1}_{c_L^{2j\ell}}]\Biggr)
\cdot [\xalpha_{L,\ell}^{-1},\vvarphi]
&\equiv
B^{-1}_{\bb^{2n}} B_{c_L^{2n}} \cdot [P_v, B^{-1}]
\qquad\mbox{for}\quad n/\ell\ge0, \\
\ttheta^2 \cdot
\Biggl(\prod_{j=1}^{-\frac n\ell-1} [B^{-1}_{c_L^{-2j\ell}},B]\Biggr)
\cdot [\xalpha_{L,\ell}^{-1},\vvarphi]
&\equiv
B^{-1}_{\bb^{2n}} B_{c_L^{2n}} \cdot [P_v, B^{-1}]
\qquad\mbox{for}\quad n/\ell<0.
\end{align*}
By~\eqref{eq:BB}, both sides of the obtained equation belong to $N_1=[N,N]$.
After identification of $N_1/[F_2,N_1]$ with $Q=(\ZZ[\pi\setminus\{1\}])/\sim$,
see \fullref{pro:QH2}, and denoting
$X=q_{N_F}(\ttheta)\in Q \approx N_1/[F_2,N_1]$,
$Y=q_N(\vvarphi)\in \ZZ[\pi]$, $V=q_N(P_v)\in \ZZ[\pi]$,
we get, similarly to~\eqref{eq:32} for the left-hand side, and
to~\eqref{eq:22} for the right-hand side, the equation
 $$
2X - {\displaystyle p_Q \left( \frac {1-\bar c_L^{\;-2n}}{1-\bar c_L^{2\ell}}
+ \frac {1-\bar c_L^{\;-2n}}{1+\bar c_L^{\;-\ell}} \cdot Y \right)}
= {\displaystyle p_Q\left( V
+ \bar\bb\frac{1-\bar\bb^{-2n}}{1-\bar\bb^2}
   \cdot \frac{1-\bar\aa^{\;-L}}{1-\bar\aa}\right)}.
 $$
Since
the automorphism $\varphi^L$ sends $\bar c_L=\bar\aa^L\bar\bb \mapsto \bar\bb$,
$\bar\bb \mapsto \bar\bb \bar\aa^L$,
and leaves fixed $\bar\aa$ and $\bar\bb^2$, we obtain, after applying the automorphism
$\varphi^L$ to both sides of the latter equation, the desired
equation~\eqref{eqn4_2f}.

This finishes the proof of \fullref{thm:second}.

\section{Solutions of the second derived equations} \label{sec:quad4}

In this section we give a necessary and sufficient condition for each of
the second derived equations \eqref{eqn2_2}, \eqref{eqn3_2},
\eqref{eqn4_2f} and \eqref{eqn4_2nf},
see \fullref{subsec:2der}, to have a solution. As a consequence, we will
describe, in each of the mixed cases,
many infinite families of $v$'s for which the equation~\eqref{eqN} has no
solution, see \fullref{rem:tables} and Tables~\ref{tbl3} and~\ref{tbl4}.
Unfortunately it is not true that if the second derived equation has a
solution then the original equation also has a solution,
see \fullref{ex:Wicks}. As we noticed in \fullref{rem:tables},
for a given $\bar v\in\pi$ which corresponds to a mixed case, it is not an
easy task to classify all the elements in $p^{-1}(\bar v)$ with respect to the
property that the corresponding equation~\eqref{eqN} has a solution or has no
solution.
In fact we do not know $\bar v$ for which the answer is completely known.

In the following three assertions, we list some identities in the quotient
$Q=\ZZ[\pi\setminus\{1\}]/\sim$, see~\eqref{eq:Qpm} and~\eqref{eq:Qgeneral},
which will be used for solving the second derived equations
\eqref{eqn2_2}, \eqref{eqn3_2}, \eqref{eqn4_2f} and \eqref{eqn4_2nf}.

As above, $\pi=\pi_\pm$ denotes the group
$\pi_\varepsilon=\langle \aa,\bb \mid \aa\bb\aa^{-\varepsilon}\bb^{-1}\rangle$,
$\varepsilon\in\{1,-1\}$, and $\bar u\in\pi$ denotes the class of an element
$u\in F_2=\langle \aa,\bb \mid \rangle$ in $\pi$. Consider
the natural projection $p_{Q}\co \ZZ[\pi]\to Q$,
see~\eqref{eq:Q}. It has the kernel
\begin{equation}\label{eq:K}
K=\ker p_{Q}
=\ZZ[\{1\}]\oplus\langle \{g+g^{-1}\mid g\in\pi\setminus\{1\}\}\rangle,
\end{equation}
where $\langle S \rangle$ denotes the minimal abelian subgroup of
$(\ZZ[\pi],+)$ containing a subset $S\subset\ZZ[\pi]$.
We will represent elements of
$Q$ by elements of $\ZZ[\pi]$, identified under the congruence
relation $X_1\equiv X_2$ {\it modulo} $K$, and shall write
$X_1\equiv X_2$ whenever $X_1-X_2\in K$.

\begin{Lem} \label{lem:ab}
For any $x\in\pi$, $k\in\ZZ$, the following congruences in $\ZZ[\pi]$ hold
modulo $K$:
 $$
\mbox{\rm (a)}\ \ \frac {1-x^{2k}}{1-x} x^{1-k} \equiv x^k, \qquad
\mbox{\rm (b)}\ \ \frac {1-x^{2k}}{1-x^2} x^{1-k} \equiv 0, \qquad
\mbox{\rm (c)}\ \ \frac {1-x^{2k}}{1-x^2} x^{-k} \equiv x^{-k}.
 $$
\end{Lem}

\begin{proof}
(a)\qua The difference of the left-hand side and the right-hand side equals
\begin{align}
\notag \frac {x^{1-k}-x^k}{1-x}
&= \frac {x^{1-k}-x}{1-x} + \frac {x-1}{1-x} + \frac {1-x^k}{1-x}
= \frac {x^{-k}-1}{x^{-1}-1} - 1 + \frac {1-x^k}{1-x}
\equiv 0; \\
\tag{b}  \frac{1-x^{2k}}{1-x^2} x^{1-k}
&= \begin{cases}
x^{1-k}+x^{3-k}+\ldots+x^{k-3}+x^{k-1}\equiv 0, & k>0, \\
0, & k=0,\\
-x^{-1-k}-x^{-3-k}-\ldots-x^{k+3}-x^{k+1}\equiv 0, & k<0;
 \end{cases} \\
\tag{c}  \frac {1-x^{2k}}{1-x^2} x^{-k}
&= \begin{cases}
x^{-k}+x^{2-k}+\ldots+x^{k-4}+x^{k-2}\equiv x^{-k}, & k>0, \\
0, & k=0,\\
-x^{-2-k}-x^{-4-k}-\ldots-x^{k+2}-x^{k}\equiv x^{-k}, & k<0.
 \end{cases}
\end{align}
This completes the proof.
\end{proof}

\begin{Cor} \label{cor:ab}
For any $x\in\pi=\pi_-$ and $n,L,\ell,k,m\in\ZZ$ with $\ell\mid n$ and $\ell$
odd, the following congruences in $\ZZ[\pi_-]$ hold modulo $K$:
\begin{align}
\tag{a} \bar\bb^n
&\equiv \frac {1-\bar\bb^{2n}}{1-\bar\aa^L\bar\bb^\ell} \bar\aa^L\bar\bb^{\ell-n}
\quad \mbox{if $n$ is even}, \\
\tag{b} \frac {1-x^{2k}}{1-x^2} x^{2m}
&\equiv \frac {1-x^{2k}}{1+x} \cdot \frac {x^{2m}-x^{1-k}}{1-x}
\equiv \frac {1-x^{2k}}{1-x} \cdot \frac {x^{2m}+(-1)^kx^{1-k}}{1+x} , \\
\tag{c}\frac {1-\bar\bb^{2n}}{1-\bar\bb^{2\ell}} \bar\bb^{2k\ell}\bar\aa^m
&\equiv
\frac {1-\bar\bb^{2n}}{1-\bar\aa^L\bar\bb^\ell} \cdot
\frac {\bar\bb^{2k\ell}+\bar\aa^L\bar\bb^{\ell-n}}{1+\bar\aa^L\bar\bb^\ell}
\bar\aa^m \quad \mbox{if $n$ is even}.
\end{align}
\end{Cor}

\begin{Lem} \label{lem:S2}
For any $n,L,\ell\in\ZZ$ with $\ell\mid n$ and $\ell$ odd, there exists
$Z_1\in\ZZ[\pi_-]$ satisfying the following congruence in
$\ZZ[\pi_-]$ modulo $K$, for any $m\in\ZZ$:
$$\frac {1-\bar\bb^{-2n}}{1-\bar\bb^2} \bar\bb \bar\aa^m
\equiv
(1-\bar\bb^{2n}) \cdot Z_1 \cdot \bar\aa^m
+ \begin{cases}
0, & n \mbox{ even}, \\
\bar\bb^n \bar\aa^m
, & n \mbox{ odd}. \end{cases}
 $$
\end{Lem}

\begin{proof}
If $n$ is even, we put
$Z_1:=-\frac {1-\bar\bb^n}{1-\bar\bb^2} \bar\bb^{1-2n}$; then
 $$
(1-\bar\bb^{2n}) \cdot Z_1 \cdot \bar\aa^m
= \frac {1-\bar\bb^{-2n}}{1-\bar\bb^2} (1-\bar\bb^n)\bar\bb \bar\aa^m
\equiv
\frac {1-\bar\bb^{-2n}}{1-\bar\bb^2} \bar\bb \bar\aa^m,
 $$
where the latter congruence is due to \fullref{lem:ab}(b).
If $n$ is odd, we put
$Z_1:= -\frac {1-\bar\bb^{n-1}}{1-\bar\bb^2} \bar\bb^{1-2n}$; then
$$
(1-\bar\bb^{2n}) \cdot Z_1 \cdot \bar\aa^m
= \frac {1-\bar\bb^{-2n}}{1-\bar\bb^2} (1-\bar\bb^{n-1})\bar\bb \bar\aa^m
\equiv
\frac {1-\bar\bb^{-2n}}{1-\bar\bb^2} \bar\bb \bar\aa^m - \bar\bb^n \bar\aa^m,
 $$
where the latter congruence is due to \fullref{lem:ab}(c).
\end{proof}

Denote $Q'=Q\otimes\ZZ_2$, and consider the natural projection
\begin{equation}\label{eq:Q'}
p_{Q'}\co \ZZ_2[\pi]\to Q' \approx
(\ZZ_2[\pi\setminus\{1\}])/\langle g+g^{-1}\mid g\in\pi\setminus\{1\}\rangle,
\end{equation}
compare~\eqref{eq:Q}.
In this section, we will only consider the unsolved case $\bar v\ne 1$.

\subsubsection*{Case of the equations \eqref{eqn3_2} and \eqref{eqn4_2nf}}
Observe that these
equations have similar form, where \eqref{eqn3_2} is in $Q_+$, while
\eqref{eqn4_2nf} is in
$Q_-$, see~\fullref{subsec:2der}.

More specifically, for the equation~\eqref{eqn3_2}, we have $B=[\aa,\bb]$,
$\pi=\pi_+$, $Q=Q_+$,
$\bar v=\bar\aa^{2m}\bar\bb^{2n}=\bar c^{2d}\in\pi$, where
$\bar c=\bar \aa^{m/d}\bar \bb^{n/d}$, $m,n\in\ZZ$, $|m|+|n|>0$, $d=\gcd(m,n)$.
For the equation \eqref{eqn4_2nf}, we have $B=\aa\bb\aa\bb^{-1}$, $\pi=\pi_-$,
$Q=Q_-$, $\bar v=\bar\aa^{2m}\bar\bb^{4n}=\bar c^{2d}\in\pi$, where
$\bar c=\bar \aa^{m/d}\bar \bb^{2n/d}$, $m,n\in\ZZ$, $|m|+|n|>0$, $d=\gcd(m,n)$,
thus $\bar c$ is orientation-preserving, and is not a proper power of an
orientation-preserving element of $\pi=\pi_-$.

Observe that the existence of a solution $(\ell,X,Y)$
of the equation~\eqref{eqn3_2} in $Q=Q_+$ is equivalent to the existence of a solution
$(\ell,Z')$
of the following equation in $Q'=Q\otimes\ZZ_2$, with the same $\ell\mid d$
and $V':=V\mod 2\in \ZZ_2[\pi_+]$, $Z':=Z\mod 2\in\ZZ_2[\pi_+]$ where
$Z=\bar c^{\ell-2d}\cdot Y+\frac {\bar c^{\;-2d}-\bar c^{\ell-d}}{1-\bar c^\ell}$:
\begin{equation}
\label{eqn3_2bar}
p_{Q'} {\displaystyle \left(
\frac{1-\bar c^{2d}}{1+\bar c^{\ell}} \cdot Z' \right)} =
p_{Q'} (V'),
\tag{$\bar 3_2$}
\end{equation}
due to \fullref{cor:ab}(b).

Similarly, the existence of a solution $(\ell,X,Y)$
of the equation~\eqref{eqn4_2nf} in $Q=Q_-$ is equivalent to
the existence of a solution $(\ell,Z')$
of the following equation in $Q'=Q\otimes\ZZ_2$, with the same $\ell\mid d$
and $V':=V\mod 2\in\ZZ_2[\pi_-]$, $Z':=Z\mod 2\in\ZZ_2[\pi_-]$ where
$Z=\bar c^{\ell-2d}\cdot Y+\frac {\bar c^{\;-2d}-\bar c^{\ell-d}}{1-\bar c^\ell}$:
\begin{equation}
\label{eqn4_2nfbar}
p_{Q'} {\displaystyle \left(
\frac{1-\bar c^{2d}}{1+\bar c^{\ell}} \cdot Z' \right)} =
p_{Q'} (V').
\tag{$\bar 4_2^{\;\nf}$}
\end{equation}
In the following \fullref{thm:partit34nf} and
\fullref{pro:orbits34nf}, we will formulate necessary and sufficient
conditions for each of the equations \eqref{eqn3_2bar} and
\eqref{eqn4_2nfbar} to have a
solution, when $\bar v\ne1$.

Denote $\bar u=\bar c^d\in\pi=\pi_\varepsilon$, thus $\bar v=\bar u^2$.
Consider the left actions on $\pi$ of the free groups
$G=\langle t,i \mid \rangle$, $\hat G=\langle \hat t,\hat i \mid \rangle$
of rank 2,
where the actions of the generators $t,i$ and $\hat t,\hat i$ are defined by
\begin{align}\label{eq:G}
t \cdot g &= \bar c g, & i \cdot g &= g^{-1}, & g&\in \pi, \\
\label{eq:hatG}
\hat t \cdot g &= \bar u g=\bar c^d g, & \hat i \cdot g &= g^{-1},
& g&\in \pi.
\end{align}
Clearly, $\hat G$ can be considered as a subgroup of $G$, with the
inclusion map $\hat G\hookrightarrow G$, $\hat t\mapsto t^d$, $\hat i\mapsto i$.
Denote $\L_g:=G\cdot g$ and $\hat \L_g:=\hat G\cdot g$, the orbits of an
element $g\in\pi$ under the actions of $G$ and $\hat G$, respectively.
Clearly $\hat \L_h\subset \L_g$ for any $g\in\pi$, $h\in \L_g$. Define the
{\it $\hat G$--augmentation}
\begin{equation}\label{eq:hatvarepsilong}
\hat\varepsilon_g\co  \ZZ_2[\hat\L_g] \to \ZZ_2, \quad
\sum_{k=1}^r m_k \bar u_k\mapsto \sum_{k=1}^r m_k, \quad
m_k\in\ZZ_2,\ \bar u_k\in \hat\L_g, \ g\in\pi,
\end{equation}
the restriction of the usual augmentation
$\ZZ_2[\pi]\to\ZZ_2$ to the subgroup
$\ZZ_2[\hat\L_g]\subset\ZZ_2[\pi]$.

\begin{Thm} \label {thm:partit34nf}
Suppose $\bar c\in\pi$, $d\in\NN$, $V\in\ZZ[\pi]$ are defined by the element
$v\in F_2$, $\bar v=\bar c^{2d}\ne 1$,
as in~$\eqref{eq:***}$.
Consider the actions~$\eqref{eq:G}$, $\eqref{eq:hatG}$ of the groups
$G,\hat G$ on $\pi$. Each of the equations \eqref{eqn3_2bar},
\eqref{eqn4_2nfbar} has
the following properties:

{\rm (A)}\qua
For every fixed $\ell\mid d$, the corresponding equation with the unknown
$Z'\in\ZZ_2[\pi]$ splits into the system of independent
equations in the subspaces $(\ZZ_2[\L_g\setminus\{1\}])/\sim$ with the
unknowns $Z'_g\in\ZZ_2[\L_g]$, where $g\in\pi$.

{\rm (B)}\qua The following conditions are pairwise equivalent:
\begin{enumerate}
\item[\rm(i)] the equation admits a solution;
\item[\rm(ii)] the equation admits a solution with $\ell=d$;
\item[\rm(iii)] for every $h\in\pi\setminus\hat \L_1$, the projection
$\hat V'_h$ of the element $V':=V\mod 2\in\ZZ_2[\pi]$ to the
subspace $\ZZ_2[\hat \L_h]$ has vanishing $\hat G$--augmentation:
$\hat\varepsilon_h(\hat V'_h)=0$.
\end{enumerate}
\end{Thm}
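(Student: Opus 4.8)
The statement concerns the two equations $(\bar 3_2)$ and $(\bar 4_2^{\,\nf})$, which have the identical shape
$$
p_{Q'}\!\left(\frac{1-\bar c^{2d}}{1+\bar c^{\ell}}\cdot Z'\right)=p_{Q'}(V')
$$
in $Q'=Q\otimes\ZZ_2$, once one sets $\bar v=\bar c^{2d}$. So I would prove everything for this abstract equation, parametrised by $\bar c\in\pi=\pi_\varepsilon$ (orientation-preserving and not a proper power of an orientation-preserving element, for both $\varepsilon=\pm 1$), $d\ge 1$, and $V'\in\ZZ_2[\pi]$, then specialise.

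First I would set up the orbit decomposition needed for (A). Consider the left action \eqref{eq:G} of $G=\langle t,i\mid\rangle$ on $\pi$: $t$ acts by left multiplication by $\bar c$, $i$ by inversion. (One has to check this is a genuine action — $i^2=\id$ holds, and there are no further relations imposed, so $G$ is free of rank $2$ as claimed; this is trivial.) The key observation is that $Q'$, as an $\ZZ_2$-vector space, decomposes as a direct sum over the $G$-orbits $\L_g$: indeed $Q'=(\ZZ_2[\pi\setminus\{1\}])/\langle g+g^{-1}\rangle$, and $\ZZ_2[\pi\setminus\{1\}]=\bigoplus_g \ZZ_2[\L_g\setminus\{1\}]$, with the relations $g\sim g^{-1}$ respecting this splitting since $i\in G$. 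Now the multiplication operator $Z'\mapsto \frac{1-\bar c^{2d}}{1+\bar c^{\ell}}\cdot Z'$ (which makes sense in $Q'$ because, working mod $K$ via Lemma~\ref{lem:ab} and Corollary~\ref{cor:ab}(b), the quotient $\frac{1-\bar c^{2d}}{1+\bar c^\ell}$ reduces to an honest polynomial in $\bar c$ — this is exactly the identity quoted just before \eqref{eqn3_2bar}) is left multiplication by an element of $\ZZ_2[\langle\bar c\rangle]$, hence commutes with the $t$-action and preserves each $\ZZ_2[\L_g]$. Therefore the equation splits as the asserted system of independent equations, one for each orbit $\L_g$ with unknown $Z'_g$; this is part (A), and it also shows that for fixed $\ell$ the only relevant data of $V'$ is its collection of orbit-projections $V'_g$.

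Second, part (B): the equivalence of (i), (ii), (iii). The implication (ii)$\Rightarrow$(i) is trivial, so I would prove (i)$\Rightarrow$(iii) and (iii)$\Rightarrow$(ii). For the first, suppose $Z'$ solves the equation with parameter $\ell\mid d$. Using \eqref{eq:hatG}, i.e.\ the subgroup $\hat G=\langle\hat t,\hat i\rangle\hookrightarrow G$ via $\hat t\mapsto t^d$, $\hat i\mapsto i$, I pass to $\hat G$-orbits $\hat\L_h$, which refine the $G$-orbits. Fixing $h\in\pi\setminus\hat\L_1$ and projecting the equation to $\ZZ_2[\hat\L_h]$, the left-hand side is $\frac{1-\bar c^{2d}}{1+\bar c^\ell}\cdot Z'$ projected; the crucial point is that $\frac{1-\bar c^{2d}}{1+\bar c^\ell}$, reduced mod $K$, is a polynomial in $\bar c^\ell$ with an \emph{even} number of terms when $\ell\mid d$ — concretely $1+\bar c^\ell+\dots+\bar c^{2d-\ell}$ has $2d/\ell$ terms — and more to the point, after applying the $\hat G$-augmentation $\hat\varepsilon_h$ of \eqref{eq:hatvarepsilong} (which is the mod-$2$ augmentation restricted to $\ZZ_2[\hat\L_h]$) the left-hand side is killed: $\hat\varepsilon_h$ of a product $P\cdot Z'_{h}$ with $P$ having even augmentation is $\hat\varepsilon_h(P)\hat\varepsilon_h(Z')=0$. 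Hence $\hat\varepsilon_h(\hat V'_h)=0$ for all such $h$, giving (iii). (One must treat $h=1$, resp.\ the orbit $\hat\L_1$, separately: the class of $1$ is identified to $0$ in $Q'$, which is precisely why $\hat\L_1$ is excluded.)

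For (iii)$\Rightarrow$(ii), I would take $\ell=d$ and construct a solution orbit by orbit, using part (A). On each $G$-orbit $\L_g$ ($g\ne 1$), I need $Z'_g\in\ZZ_2[\L_g]$ with $(1-\bar c^{2d})(1+\bar c^d)^{-1}Z'_g\equiv V'_g$; because $\langle \bar c\rangle$ acts freely on $\pi$ (as $\pi$ is torsion free and $\bar c$ is orientation-preserving and not a proper power of such), each $G$-orbit is a disjoint union of $\langle\bar c^d\rangle$-orbits $\hat\L_h$, and on $\ZZ_2[\hat\L_h]$ the operator of multiplication by $1+\bar c^d$ has image exactly the augmentation-zero subspace, with kernel spanned by $\sum_{k\in\ZZ}\bar c^{dk}h$ (or all of $\ZZ_2[\hat\L_h]$ is hit modulo that one-dimensional kernel when the orbit is infinite). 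So solvability of the $\hat\L_h$-component is precisely $\hat\varepsilon_h(\hat V'_h)=0$, which is hypothesis (iii); assembling these local solutions (and using the $i$-symmetry to stay inside $Q'$, i.e.\ respecting $g\sim g^{-1}$) produces a global $Z'$ with $\ell=d$. The main obstacle I anticipate is bookkeeping the interaction between the $\ZZ_2$-reduction, the passage mod $K$ (so that $\frac{1-\bar c^{2d}}{1+\bar c^\ell}$ is a bona fide element rather than a formal fraction), and the orbit/sub-orbit structure — in particular making the operator $1+\bar c^d$ on each $\hat\L_h$ precisely surjective onto $\ker\hat\varepsilon_h$, which requires knowing the $\langle\bar c\rangle$-orbits are infinite (true since $\pi$ is torsion free) and handling the identification $g\sim g^{-1}$ when an orbit is $i$-invariant. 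Everything else is the abstract-nonsense splitting plus the augmentation computation.
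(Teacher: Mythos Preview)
Your treatment of (A) and of (iii)$\Rightarrow$(ii) is essentially the paper's. There is, however, a genuine gap in your direct argument for (i)$\Rightarrow$(iii): the claimed identity $\hat\varepsilon_h(P\cdot Z'_h)=\hat\varepsilon_h(P)\,\hat\varepsilon_h(Z')$ is both ill-typed and false. The map $\hat\varepsilon_h$ is defined only on $\ZZ_2[\hat\L_h]$, but for $\ell<d$ neither $P=\frac{1-\bar c^{2d}}{1+\bar c^\ell}$ nor $P\cdot Z'_h$ lies there --- left multiplication by $\bar c^\ell$ preserves $G$-orbits, not $\hat G$-orbits. Even if one extends $\hat\varepsilon_h$ to all of $\ZZ_2[\pi]$ by first projecting to $\ZZ_2[\hat\L_h]$, the resulting map is not multiplicative: take $P=1+\bar c$ and $Z'=g$ with $g\notin\hat\L_h$ but $\bar c g\in\hat\L_h$.

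What actually makes the conclusion hold is the factorisation
\[
P=\frac{1-\bar c^{2d}}{1+\bar c^\ell}=(1+\bar c^d)\cdot\frac{1+\bar c^d}{1+\bar c^\ell}
\quad\text{in }\ZZ_2[\langle\bar c\rangle],
\]
the second factor being the honest polynomial $1+\bar c^\ell+\cdots+\bar c^{d-\ell}$. The factor $(1+\bar c^d)$ \emph{does} preserve each $\hat\L_h$ (since $\hat t$ acts by $\bar c^d$) and visibly kills $\hat\varepsilon_h$. This is precisely what the paper exploits, though it organises the logic differently: it first proves (i)$\Rightarrow$(ii) by observing that if $(\ell,Z')$ solves the equation then so does $\bigl(d,\tfrac{1+\bar c^d}{1+\bar c^\ell}Z'\bigr)$, the right-hand side being independent of $\ell$; then (ii)$\Rightarrow$(iii) is immediate because with $\ell=d$ the equation reads $(1+\bar c^d)Z'\equiv V'\pmod{K'}$, and projecting to $\ZZ_2[\hat\L_h]$ for $h\notin\hat\L_1$ gives $\hat\varepsilon_h(\hat V'_h)=0$. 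Once you insert the factorisation, your argument and the paper's are the same computation in different orders.
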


\begin{proof}
(A)\qua Clearly, the equivalence $g\sim g^{-1}$,
$g\in\pi\setminus\{1\}$, on $\pi\setminus\{1\}$ induces an equivalence
relation on $\L_g\setminus\{1\}$, for each orbit $\L_g$.  Moreover,
two elements of $\ZZ_2[\pi\setminus\{1\}]$ are equivalent if and only
if their projections to each subspace $\ZZ_2[\L_g\setminus\{1\}]$ are
equivalent.  Since $\frac{1-\bar c^{2d}}{1+\bar c^{\ell}} \cdot Z'$
belongs to $\ZZ_2[\L_g]$ whenever $Z'\in \ZZ_2[\L_g]$, the induced
equations in the quotients of $\ZZ_2[\L_g\setminus\{1\}]$ by $\sim$
are pairwise independent (for every fixed $\ell$).

(B)\qua
Consider the natural projection $p_{Q'}\co \ZZ_2[\pi]\to Q'=Q\otimes\ZZ_2$,
see~\eqref{eq:Q'}. It has the kernel
 $$
K'=\ker p_{Q'}
=\ZZ_2[\{1\}]\oplus\langle \{g+g^{-1}\mid g\in\pi\setminus\{1\}\}\rangle,
 $$
where $\langle S \rangle$ denotes the minimal abelian subgroup of
$(\ZZ_2[\pi],+)$ containing a subset $S\subset\ZZ_2[\pi]$,
compare~\eqref{eq:K}. Similarly to \fullref{lem:ab}, \fullref{cor:ab} and \fullref{lem:S2},
we will represent elements of $Q'$ by elements of $\ZZ_2[\pi]$, identified
under the congruence relation $X_1\equiv X_2$ {\it modulo} $K'$, and shall
write $X_1\equiv X_2$ whenever $X_1-X_2\in K'$.

(i)$\implies$(ii)\qua Suppose that $(\ell,Z')$ is a solution. Then
the left-hand side equals
 $$
\frac {1-\bar c^{2d}}{1+\bar c^\ell}\cdot Z'
=\frac {1-\bar c^{2d}}{1+\bar c^d}\cdot \frac {1+\bar c^d}{1+\bar c^\ell} \cdot Z'.
 $$
Since the right-hand sides of~\eqref{eqn3_2bar} and \eqref{eqn4_2nfbar} do not depend
on $\ell$, the pair $(d,\frac {1+\bar c^d}{1+\bar c^\ell}\cdot Z')$ is a
solution.

(ii)$\implies$(iii)\qua Suppose $(d,Z')$ is a solution, thus
 $$
(1-\bar c^d)\cdot Z'\equiv V'.
 $$
It follows that $V'=U'+W'$, where $U'$ is a linear combination of the
elements of the form $(1-\bar c^d)g_1$, $g_1\in\pi$, while $W'\in K$ is a
linear combination of the elements of the form $g_2+g_2^{-1}$ and $g_3$,
$g_2\in\pi\setminus\{1\}$, $g_3=1\in\pi$.

Take any $h\in\pi\setminus\hat \L_1$. It follows that $\hat V'_h$ is a linear
combination of $(1-\bar c^d)g_1$, $g_2+g_2^{-1}$, and $g_3$, where
$g_1\in\hat \L_h$, $g_2\in\hat \L_h\setminus\{1\}$, $g_3=1\in\pi\cap \hat \L_h$.
Since $g_3=1\not\in\hat \L_h$, the coefficient at $g_3$ in this linear
combination vanishes. Therefore the augmentation of this linear combination
vanishes, thus $\hat\varepsilon_h(\hat V'_h)=0$.

(iii)$\implies$(i)\qua Suppose $\hat\varepsilon_h(\hat V'_h)=0$ for any
$h\in\pi\setminus\hat \L_1$.
Since $\hat V'_h\in\ZZ_2[\hat \L_h]$, and $\hat \L_h$ is an orbit with respect
to the action of the group $\hat G$ on $\pi$, it follows from
$\hat\varepsilon_h(\hat V'_h)=0$ that $\hat V'_h$ is a linear
combination of the elements of the form $(1-\bar c^d)g_1$ and $g_2+g_2^{-1}$,
where $g_1\in\hat \L_h$, $g_2\in\hat \L_h\setminus\{1\}$.
Similarly, since one of the elements $\hat V'_1,\hat V'_1+1\in\ZZ_2[\hat \L_1]$
has vanishing $\hat G$--augmentation, it follows that $\hat V'_1$ is a linear
combination
of the elements of the form $(1-\bar c^d)g_1$, $g_2+g_2^{-1}$, and $g_3$,
where $g_1\in\hat \L_1$, $g_2\in\hat \L_1\setminus\{1\}$, $g_3=1\in\hat \L_1$.

This immediately gives $\hat V'_h \equiv (1-\bar c^d)\cdot \hat Z_h'$, for some
$\hat Z_h'\in\ZZ_2[\hat \L_h]$, for every $h\in\pi$.
Since $V'$ equals the sum of $\hat V_h'\in\ZZ_2[\hat \L_h]$ over all
$\hat G$--orbits $\hat \L_h\subset\pi$, we obtain the desired decomposition
$V' \equiv (1-\bar c^d)\cdot Z'$, for some $Z'\in\ZZ_2[\pi]$. Hence
$(d,Z')$ is a solution.
\end{proof}

\begin{Pro} \label {pro:orbits34nf}
Suppose $\bar u\in\pi$, $w_\varepsilon(\bar u)=1$, $\bar v=\bar u^2$, where
$\pi=\pi_\varepsilon=\langle\aa,\bb\mid\aa\bb\aa^{-\varepsilon}\bb^{-1}\rangle$.
Consider the corresponding action~$\eqref{eq:hatG}$ of the group $\hat G$ on
$\pi$. Then the orbits $\hat \L_h$, $h\in\pi$, under this action have the
following form:

{\rm (A)}\qua Suppose $\varepsilon=1$ and $\bar u=\bar\aa^m\bar\bb^n$,
$m,n\in\ZZ$. Then, for $h=\bar\aa^p\bar\bb^q$, $p,q\in\ZZ$, one has
 $$
\hat \L_h
= \{\bar u^k h^{\pm1}\mid k\in \ZZ\}
= \{\bar\aa^{p+km}\bar\bb^{q+kn}\mid k\in \ZZ\} \cup
\{\bar\aa^{\;-p+km}\bar\bb^{-q+kn}\mid k\in \ZZ\} .
 $$
{\rm (B)}\qua Suppose $\varepsilon=-1$, thus $\bar u=\bar\aa^m\bar\bb^{2n}$,
$m,n\in\ZZ$. If $w_-(h)=1$ then $h=\bar\aa^p\bar\bb^{2q}$, for some
$p,q\in\ZZ$, and
$$\hat \L_h
= \{\bar u^k h^{\pm1}\mid k\in \ZZ\}
= \{\bar\aa^{p+km}\bar\bb^{2q+2kn}\mid k\in \ZZ\} \cup
\{\bar\aa^{\;-p+km}\bar\bb^{-2q+2kn}\mid k\in \ZZ\}.$$
If $w_-(h)=-1$ then $h=\bar\aa^p\bar\bb^{2q+1}$, for some $p,q\in\ZZ$, and
\begin{align*}
\hat \L_h
 &= \{\bar\aa^{km}\bar\bb^{(2k+4r)n} h^{\pm1} \mid k,r\in\ZZ\} \\
&= \{\bar\aa^{p+km}\bar\bb^{2q+1+(2k+4r)n}\mid k,r\in \ZZ\}
 \cup \{\bar\aa^{p+km}\bar\bb^{-(2q+1)+(2k+4r)n}\mid k,r\in\ZZ\} ,
\end{align*}
moreover, in the latter case, the set of all such orbits is in one-to-one
correspondence with the set
$\ZZ_{|m|}\oplus\ZZ_{|n|}$,
where one denotes $\ZZ_0=\ZZ$, $\ZZ_1=\{0\}$;
in particular, the number of such orbits is either $|mn|$ if $mn\ne0$, or
infinite if $mn=0$.
\end{Pro}

\begin{proof}
(A)\qua Suppose $h=\bar\aa^p\bar\bb^q$, and denote
$\bar \L_h=\{\bar u^k h^{\pm1}\mid k\in \ZZ\}$. Obviously, $h\in\bar \L_h$, and
$\bar \L_h$ is invariant under the action of $\hat G$ (since $\pi=\pi_+$ is
abelian), hence $\hat \L_h \subset \bar \L_h$. The converse inclusion follows
from the fact that any element of $\bar \L_h$ is obtained from $h$ or $h^{-1}$
by the left multiplication by $\bar u^k$, for some $k\in\ZZ$.
This proves $\hat \L_h=\bar \L_h$.

The equality of the two presentations for the set $\hat \L_h$ follows from
the fact that the group $\pi=\pi_+$ is abelian.

(B)\qua Suppose $w_-(h)=1$, thus $h=\bar\aa^p\bar\bb^{2q}$. Denote
$\bar \L_h=\{\bar u^k h^{\pm1}\mid k\in \ZZ\}$. Obviously, $h\in\bar \L_h$.
Since $w_-(\bar u)=w_-(h)=1$, the elements $\bar u$ and $h$ commute, therefore
$\bar \L_h$ is invariant under the action of $\hat G$, hence
$\hat \L_h \subset \bar \L_h$. The converse inclusion follows from the fact that
any element of $\bar \L_h$ is obtained from $h$ or $h^{-1}$ by the left
multiplication by $\bar u^k$, for some $k\in\ZZ$.
This proves $\hat \L_h=\bar \L_h$.

The equality of the two presentations for the set $\hat \L_h$ follows from the
fact that the subgroup of $\pi=\pi_-$ generated by $\bar\aa,\bar\bb^2$ is
abelian.

Suppose now $w_-(h)=-1$, thus $h=\bar\aa^p\bar\bb^{2q+1}$. Denote
$$
\bar \L_h =
 \{\bar\aa^{p+km}\bar\bb^{2q+1+(2k+4r)n}\mid k,r\in \ZZ\} \cup
 \{\bar\aa^{p+km}\bar\bb^{-(2q+1)+(2k+4r)n}\mid k,r\in \ZZ\} .
 $$
Obviously $h\in\bar \L_h$. Let us show that $\bar \L_h$ is invariant
under the action of $\hat G$. Since $\bar\bb^2$ commutes with any
element of $\pi=\pi_-$, and $\bar\aa^p\bar\bb=\bar\bb\bar\aa^{\;-p}$,
we have, for any $s\in\ZZ$,
\begin{align*}
\bar u^s \cdot \bar\aa^{p+km}\bar\bb^{2q+1+2(k+2r)n}
 &= (\bar\aa^m\bar\bb^{2n})^s \cdot \bar\aa^{p+km}\bar\bb^{2q+1+2(k+2r)n} \\
 &= \bar\aa^{sm}\bar\bb^{2sn} \cdot \bar\aa^{p+km}\bar\bb^{2q+1+2(k+2r)n} \\
 &= \bar\aa^{p+(s+k)m}\bar\bb^{2q+1+2(s+k+2r)n} \in \bar \L_h, \\
(\bar\aa^{p+km} \bar\bb^{2q+1+(2k+4r)n})^{-1}
 &= \bar\aa^{p+km} \bar\bb^{-(2q+1)-(2k+4r)n} \in \bar \L_h,
\end{align*}
and similarly
 $\bar u^s \cdot \bar\aa^{p+km}\bar\bb^{-(2q+1)+2(k+2r)n}\in \bar \L_h$,
 $(\bar\aa^{p+km} \bar\bb^{-(2q+1)+(2k+4r)n})^{-1} \in \bar \L_h$.
Therefore $\hat \L_h \subset \bar \L_h$. The converse inclusion follows by
observing that
 $$
 \bar\aa^{p+km}\bar\bb^{2q+1+(2k+4r)n}
 = \hat t^{k} \cdot \bar\aa^p\bar\bb^{2q+1+4rn}
 = \hat t^{k} \hat i\hat t^{-r}\hat i\hat t^{r}
\cdot \bar\aa^p\bar\bb^{2q+1} \in \hat\L_h,
 $$
therefore any element of $\bar \L_h$ belongs to the orbit $\hat\L_h$ of
$h=\bar\aa^p\bar\bb^{2q+1}$
under the action of $\hat G$. This proves $\hat \L_h\supset\bar \L_h$ and,
hence, $\hat \L_h=\bar \L_h$.

The equality of the two presentations for the set $\hat \L_h$ follows from the
identities
\begin{align*}
 \bar\aa^{km}\bar\bb^{(2k+4r)n} \cdot h 
 &= \bar\aa^{km}\bar\bb^{(2k+4r)n} \cdot \bar\aa^p\bar\bb^{2q+1} \\
 &= \bar\aa^{p+km}\bar\bb^{2q+1+(2k+4r)n}, \\
\text{and}\qquad \bar\aa^{km}\bar\bb^{(2k+4r)n} \cdot h^{-1}
 &= \bar\aa^{km}\bar\bb^{(2k+4r)n} \cdot \bar\bb^{-(2q+1)}\bar\aa^{\;-p} \\
 &= \bar\aa^{p+km}\bar\bb^{-(2q+1)+(2k+4r)n}.
\end{align*}
This completes the proof.
\end{proof}

\subsubsection*{Case of the equations \eqref{eqn2_2} and \eqref{eqn4_2f}}
For each of the equations~\eqref{eqn2_2} and~\eqref{eqn4_2f}, see \fullref{subsec:2der}, we have
$B=\aa\bb\aa\bb^{-1}$, $\pi=\pi_-$, $Q=Q_-$, $\bar v=\bar \bb^{2n}$, $n\in\ZZ$.
Both equations have the unknowns $(L,\ell,X,Y)$ with $L,\ell\in\ZZ$ as
in~\eqref{ell}, $X\in Q$, $Y\in\ZZ[\pi]$.

Observe that the existence of a solution~$(L,\ell,X,Y)$
of the equation~\eqref{eqn2_2} in $Q=Q_-$ is equivalent to the existence of a solution
$(L,\ell,Z)$ of the following equation in $Q$, with the same
$V\in\ZZ[\pi_-]$, $L,\ell\in\ZZ$ satisfying~\eqref{ell}, and with
$Z=\bar c_L^{\ell-2n}Y+\bar c_L^{\;-2n}+C$, 
$\bar c_L=\bar\bb\bar\aa^{\;-L}=\varphi^{-L}(\bar\bb)$,
$C\in\ZZ[\pi_-]$:
\begin{equation}
\label{eqn2_2bar}
p_Q\left( \frac{1-\bar\bb^{2n}}{1-\bar\bb^\ell} \cdot \varphi^L(Z) \right)
=
p_Q(\varphi^L(V) )
+ \begin{cases} 0, & n \mbox{ even}, \\
- p_Q \left( \bar\bb^n \frac{1-\bar\aa^L}{1-\bar\aa} \right),
                               & n \mbox{ odd}, \end{cases}
\tag{$\bar 2_2$}
\end{equation}
where $C$ is determined by \fullref{lem:S2}.
As in~\eqref{eqn2_2}, this equation is equivalent to $0=p_Q(V)$ if $n=0$.

Similarly, the existence of a solution $(L,\ell,X,Y)$ of the
equation~\eqref{eqn4_2f} in $Q=Q_-$ is equivalent to the existence of a solution
$(L,\ell,Z')$ of the following equation in $Q'=Q\otimes\ZZ_2$, with
the same $L,\ell\in\ZZ$ satisfying~\eqref{ell}, with
$V':=V\mod 2\in\ZZ_2[\pi_-]$, $Z':=Z\mod 2\in\ZZ_2[\pi_-]$, and with
$Z=\bar c_L^{\ell-2n}Y
+\frac {\bar c_L^{\;-2n}+(-1)^n\bar c_L^{\ell-n}}{1+\bar c_L^\ell}+C$,
$C\in\ZZ[\pi_-]$ from above:
\begin{equation}
\label{eqn4_2fbar}
p_{Q'} \left(\frac{1-\bar\bb^{2n}}{1-\bar\bb^{\ell}}
\cdot \varphi^L(Z') \right)
=
p_{Q'}(\varphi^L(V') )
+ \left\{ \begin{array}{ll} 0, & n \mbox{ even}, \\
- p_{Q'} \left( \bar\bb^n \frac{1-\bar\aa^L}{1-\bar\aa} \right),
                               & n \mbox{ odd}, \end{array} \right.
\tag{$\bar 4_2^\f$}
\end{equation}
due to \fullref{cor:ab}(b).
This equation is equivalent to $0=p_{Q'}(V')$ if $n=0$.

Below (see \fullref{thm:partit24f} and \fullref{pro:orbits24f}),
we will formulate necessary and sufficient conditions for each of the equations
\eqref{eqn2_2bar}, \eqref{eqn4_2fbar} to have a solution, when $\bar v\ne1$.

From now on, for the remainder of this section, let us fix an integer $n\ne 0$,
and denote by $s$ the exponent of $2$ in the prime factorization of $|n|$; put
$\mu=\frac {n}{|n|}2^s$, $\ell_{\max}=\frac {|n|}{2^s}=\frac {n}{\mu}$, the
greatest odd divisor of $n$.
Consider the left actions on $\pi=\pi_-$ of the groups
\begin{align*}
G_L&:=\bigl\langle t_L,i \mid i^2,\ (t_Lit_L)^2,\ (it_L)^4  \bigr\rangle , \\
\hat G_L&:=\bigl\langle \hat t_L,\hat i \mid \hat i^2,\ (\hat t_L\hat i\hat t_L)^2,\
(\hat i\hat t_L)^4  \bigr\rangle , \\
\ttilde G_L&:=\bigl\langle \ttilde t,\ttilde i,\ttilde j_L
\mid \ttilde i^2,\ \ttilde j_L^2,\ (\ttilde i\ttilde j_L)^2,\
(\ttilde i\ttilde t)^2,\ (\ttilde j_L\ttilde t)^2 \bigr\rangle
\approx \ZZ \rtimes_{\psi_2} (\ZZ_2\oplus\ZZ_2) , \\
&\qquad\psi_2\co \ZZ_2\oplus\ZZ_2\to\Aut(\ZZ), \quad
\psi_2(\ttilde i)(\ttilde t):=\ttilde t^{-1}=:\psi_2(\ttilde j_L)(\ttilde t), \\
\hhat G&:=\bigl\langle \hhat t,\hhat i \mid \hhat i^2,\ (\hhat i\hhat t)^2
\bigr\rangle
\approx \ZZ \rtimes_{\psi_1} \ZZ_2 , \qquad
\psi_1\co \ZZ_2\to\Aut(\ZZ), \quad \psi_1(\hhat i)(\hhat t):=\hhat t^{\;-1},
\end{align*}
where $L\in\ZZ$, and the actions of the generators
$\ttilde t,\ttilde i,\ttilde j_L$, $\hat t_L,\hat i$, and $t_L,i$ are defined by
\begin{align}
\label{eq:G'}
t_L \cdot g &= \bar\aa^L\bar\bb g, & i \cdot g &= g^{-1}, && & g&\in \pi, \\
\label{eq:hatG'}
\hat t_L \cdot g &= \bar\aa^L\bar\bb^{\ell_{\max}} g,
& \hat i \cdot g &= g^{-1}, && & g&\in \pi, \\
\label{eq:ttildeG}
\ttilde t \cdot g &= \bar\bb^{2n} g, & \ttilde i \cdot g &= g^{-1}, &
\ttilde j_L \cdot g 
 &= \bar\aa^L\bar\bb^{\ell_{\max}}(\bar\aa^L\bar\bb^{\ell_{\max}} g)^{-1},
& g&\in \pi.
\end{align}
Clearly, we have the inclusions
$\hhat G\subset \ttilde G_L \hookrightarrow \hat G_L\hookrightarrow G_L$
with $\ttilde t \mapsto \hat t_L^{\;2\mu}$, $\hat t_L \mapsto t_L^{\ell_{\max}}$,
$\ttilde i\mapsto \hat i\mapsto i$, $\ttilde j_L\mapsto \hat t_L\hat i\hat t_L$,
which respect the actions.

This provides the following alternative approach for defining the
groups $\hhat G$, $\ttilde G_L$, $\hat G_L$. We will henceforth
identify these groups with the corresponding subgroups of the group
$G_L$ by denoting
$$\ttilde t=\hat t_L^{2\mu}=t_L^{2n}=:t, \quad
  \hat t_L=t_L^{\ell_{\max}}, \quad
\ttilde i=\hat i=i, \quad \ttilde j_L=\hat t_L i\hat t_L=:j_L.$$
Thus the subgroups $\hhat G\subset \ttilde G_L \subset \hat G_L\subset G_L$
admit the following presentations by means of generators and defining relations:
\begin{align*}
\hat G_L :=&\langle \hat t_L,i \mid i^2,\ (\hat t_Li\hat t_L)^2,\
(i\hat t_L)^4  \rangle , \\
\ttilde G_L :=&\langle t,i,j_L \mid i^2,\ j_L^2,\ (ij_L)^2,\ (it)^2,\ (j_Lt)^2
\rangle \approx \ZZ \rtimes_{\psi_2} (\ZZ_2\oplus\ZZ_2) , \\
\hhat G =&\langle t,i \mid i^2,\ (it)^2 \rangle
\approx \ZZ \rtimes_{\psi_1} \ZZ_2 .
\end{align*}
Observe that the defined in this way group $\hhat G$ depends on $L$. However the
above presentations of the groups by means of generators and defining relations
provide an obvious group isomorphism $G_L\approx G_{L'}$
for $L,L'\in\ZZ$. Although this isomorphism does not respect the actions of
$G_L,G_{L'}$ on $\pi$ if $L\ne L'$ (since these actions determine different
orbits), the induced isomorphism of the corresponding subgroups
$\hhat G\subset G_L$ and $\hhat G\subset G_{L'}$ respects the actions. This
gives the natural identification of different subgroups $\hhat G\subset G_L$,
respecting their actions on $\pi$.

One easily checks that
\begin{align}
\label{eq:actions}
i \cdot (\bar\bb^{2q}\bar\aa^p) &= \bar\bb^{-2q}\bar\aa^{\;-p}, &
j_L \cdot (\bar\bb^{2q}\bar\aa^p) &= \bar\bb^{-2q}\bar\aa^p, \\
\notag
i \cdot (\bar\bb^{2q+1}\bar\aa^p) &= \bar\bb^{-(2q+1)}\bar\aa^p, &
j_L \cdot (\bar\bb^{2q+1}\bar\aa^p) &= \bar\bb^{-(2q+1)}\bar\aa^{\;-p-2L}, \\
\notag
t \cdot (\bar\bb^q\bar\aa^p) &= \bar\bb^{q+2n}\bar\aa^p, &
\hat t_L \cdot (\bar\bb^{2q}\bar\aa^p) &= \bar\bb^{2q+\ell_{\max}}\bar\aa^{p-L},
\\
\notag
t_L \cdot (\bar\bb^{2q}\bar\aa^p) &= \bar\bb^{2q+1}\bar\aa^{p-L}.
\end{align}
Denote $\L_{g,L}:=G_L\cdot g$, $\hat\L_{g,L}:=\hat G_L\cdot g$,
$\ttilde \L_{g,L}:=\ttilde G_L\cdot g$, and $\hhat \L_g:=\hhat G\cdot g$,
the orbits of an element $g\in\pi$ under the actions of
$G_L$, $\hat G_L$, $\ttilde G_L$, and $\hhat G$, respectively. Clearly
$\hhat \L_h\subset \ttilde \L_{g,L}\subset \hat\L_{f,L}\subset \L_{e,L}$
for any $e\in\pi$, $f\in \L_{f,L}$, $g\in\hat\L_{f,L}$, $h\in \ttilde\L_{g,L}$.

An element $g\in\pi$ (together with its orbit $\hhat\L_g$) is called
$\hhat G$--{\it regular} if $g$ has a trivial stabilizer with respect to the
action of $\hhat G$ on $\pi$ (thus $\Stab_{\hhat G}(g)=\{1\}$, so the natural
map $\hhat G\to \hhat G\cdot g$ is bijective). Otherwise $g$ (together with its
orbit $\hhat\L_g$) is called $\hhat G$--{\it singular}.

\begin{Lem} \label{lem:reg}
An element $g\in\pi$ is $\hhat G$--singular if and only if either $w_-(g)=1$ and
$g=\bar\bb^{nk}$ (thus $nk$ is even), or $w_-(g)=-1$ and
$g=\bar\bb^{nk}\bar\aa^m$ (thus $nk$ is odd), for some $k,m\in\ZZ$.
Moreover, the stabilizer $\Stab_{\hhat G}(g)$ of a $\hhat G$--singular element
$g=\bar\bb^{nk}\bar\aa^m$ under the action of $\hhat G$ is the cyclic subgroup
of $\hhat G$ generated by the element $t^ki\in\hhat G$.
Here the element $t^ki$ is conjugate in $\hhat G$ either to the element $i$
if $k$ is even, or to the element $ti$ if $k$ is odd.
\qed
\end{Lem}

In the case of the equation \eqref{eqn4_2fbar}, we
define the augmentations
\begin{equation}\label{eq:varepsilong}
\hhat\varepsilon_g\co \ZZ_2[\hhat\L_g]\to\ZZ_2, \quad
\ttilde\varepsilon_{g,L}\co \ZZ_2[\ttilde\L_{g,L}]\to\ZZ_2, \quad
\hat\varepsilon_{g,L}\co \ZZ_2[\hat\L_{g,L}]\to\ZZ_2,
\end{equation}
called the {\it $\hhat G$--augmentation},
{\it $\ttilde G_L$--augmentation},
and {\it $\hat G_L$--augmentation}, respectively,
as the restrictions of the usual augmentation
$\ZZ_2[\pi]\to\ZZ_2$ to $\ZZ_2[\hhat\L_g]$,
$\ZZ_2[\ttilde\L_{g,L}]$, and $\ZZ_2[\hat\L_{g,L}]$, respectively,
for every $g\in\pi$.

In order to define similar augmentations
in the case of the equation~\eqref{eqn2_2bar},
the following constructions will be useful. Consider the character
 $$
\chi_L \co  G_L \to \ZZ^*=\{1,-1\}, \quad t_L\mapsto -1,\ i\mapsto -1,
 $$
thus $\hat t_L=t_L^{\ell_{\max}}\mapsto -1$, $t=t_L^{2n}\mapsto 1$. Denote
$\chi:=\chi_L|_{\hhat G}$.
For every $\hhat G$--regular element $g\in\pi$, define the {\it $\chi$--twisted
$\hhat G$--augmentation}
\begin{equation}\label{eq:hhatvarepsilong}
\hhat\varepsilon_g\co  \ZZ[\hhat\L_g] \to \ZZ, \quad
r\cdot g \mapsto \chi(r), \quad r\in\hhat G,
\end{equation}
by the linear extension of the latter formula.
The $\chi$--twisted $\hhat G$--augmentation $\hhat\varepsilon_g$ is well-defined
for any $\hhat G$--regular element $g\in\pi$, since the equality
$r_1\cdot g=r_2\cdot g$ implies $r_1^{-1}r_2\in\Stab_{\hhat G}(g)=\{1\}$, hence
$r_1=r_2$. We also have $\hhat\varepsilon_{r\cdot g}=\chi(r)\hhat\varepsilon_g$,
for any $r\in\hhat G$, and for any $\hhat G$--regular element $g\in\pi$.

An element $g\in\pi$ (together with its $\ttilde G_L$--orbit) is called
$\ttilde G_L$--{\it defective}, or simply
{\it defective}, if there exists $r\in\Stab_{\ttilde G_L}(g)$ with
$\chi_L(r)=-1$. In other words,
$\chi_L(\Stab_{\ttilde G_L}(g))=\{1,-1\}$ for defective
$g$, and $\chi_L(\Stab_{\ttilde G_L}(g))=\{1\}$ for non-defective $g$.
For every element $g\in\pi$,
define the {\it $\chi_L$--twisted $\ttilde G_L$--augmentation}
\begin{equation}\label {eq:defec}\hspace{-.5cm}
\ttilde\varepsilon_{g,L}\co  \ZZ[\ttilde\L_{g,L}] \to
 \! \left\{ \begin{array}{ll} \!\! \ZZ, & \!\!\! g\ \mbox{non-defective},\\
                        \!\! \ZZ_2, & \!\!\! g\ \mbox{defective}, \end{array} \right.
\quad
 r\cdot g \mapsto \! \left\{ \begin{array}{r}
 \!\! \chi_L(r)\in\ZZ, \\
 \!\! 1\in\ZZ_2, \end{array} \right.
\ r\in\ttilde G_L, \!\!\!\!\!\!\!\!\!
\end{equation}
by the linear extension of the latter formula.
If $n$ is odd, we similarly define the {\it $\chi_L$--twisted
$\hat G_L$--augmentation}
\begin{equation}\label {eq:defec:hat}\hspace{-.5cm}
 \hat\varepsilon_{g,L}\co  \ZZ[\hat\L_{g,L}] \to
 \! \left\{ \begin{array}{ll} \!\! \ZZ, & \!\!\! g\ \mbox{non-defective},\\
                            \!\! \ZZ_2, & \!\!\! g\ \mbox{defective}, \end{array} \right.
 \quad
 r\cdot g \mapsto \! \left\{ \begin{array}{r}
 \!\! \chi_L(r)\in\ZZ, \\
 \!\! 1\in\ZZ_2, \end{array} \right.
 \ r\in \hat G_L, \!\!\!\!\!\!\!\!\!
\end{equation}
by the linear extension of the latter formula. One easily checks that
\begin{align}
\notag
\hat\varepsilon_{g,L}(\hat V_{g,L})
&=\ttilde\varepsilon_{g,L}(\ttilde V_{g,L})
-\ttilde\varepsilon_{h,L}(\ttilde V_{h,L})
\quad \mbox{where}\quad h:=\bar\aa^L\bar\bb^ng, \ n\mbox{ odd},\\
\label{eq:varepsilon:mod2}
 \!\! \hhat\varepsilon_{g,L}(\hhat V_{g,L})
 &=\begin{cases}
 \ttilde\varepsilon_g(\ttilde V_g), & g\in\{(\bar\aa^L\bar\bb)^k\mid k\in\ZZ\},\\
\ttilde\varepsilon_g(\ttilde V_g)\mod 2,
  & g\in\{\bar\bb^{2kn}\bar\aa^m\mid k,m\in\ZZ,\ m\ne0\}, \\
\ttilde\varepsilon_g(\ttilde V_g)+\ttilde\varepsilon_{ij_L\cdot g}(\ttilde 
V_{ij_L\cdot g}),
 & \mbox{otherwise}
 .  \end{cases}
\end{align}
Observe that, if an element $g\in\pi$ is defective, then all elements
$h\in\ttilde\L_{g,L}$ (as well as $h\in\hat\L_{g,L}$ if $n$ is odd) are also
defective (since
$\chi_L(r)=\chi_L(srs^{-1})$ for any $r,s\in\ttilde G_L$), furthermore
$\ttilde\varepsilon_{h,L}=\ttilde\varepsilon_{g,L}$ is the usual augmentation
on $\ZZ[\ttilde\L_{g,L}]$ reduced modulo~2.
For non-defective $g\in\pi$, the $\chi_L$--twisted $\ttilde G_L$--augmentation
$\ttilde\varepsilon_{g,L}$ is well-defined, since the equality
$r_1\cdot g=r_2\cdot g$ implies $r_1^{-1}r_2\in\Stab_{\ttilde G_L}(g)$, hence
$\chi_L(r_1^{-1}r_2)=1$ and $\chi_L(r_1)=\chi_L(r_2)$. We have
$\ttilde\varepsilon_{r\cdot g,L}=\chi_L(r)\ttilde\varepsilon_{g,L}$,
for any $r\in\ttilde G_L$, and for any non-defective $g\in\pi$.

\begin{Lem} \label{lem:defec}
An element $g\in\pi=\pi_-$ is defective if and only if
$g=\bar\bb^{nk}\bar\aa^m$ for some $k,m\in\ZZ$.
In particular, all $\hhat G$--singular elements are defective.
\end{Lem}

\begin{proof}
The assertion easily follows from the following formulae for the stabilizer
$\Stab_{\ttilde G_L}(g)$ of an element $g\in\pi=\pi_-$.
Suppose that $g$ is not of the form $\bar\bb^{nk}\bar\aa^m$, $k,m\in\ZZ$.
If $g=(\bar\aa^L\bar\bb)^k$, $k\in\ZZ$, then $\Stab_{\ttilde G_L}(g)$ is the
cyclic subgroup of $\ttilde G_L$ generated by $ij_L$;
otherwise $\Stab_{\ttilde G_L}(g)=\{1\}$. Therefore
$\chi_L(\Stab_{\ttilde G_L}(g))=\{1\}$, hence $g$ is non-defective.
Suppose that $g=(\bar\aa^L\bar\bb)^{nk}\bar\aa^m$, $k,m\in\ZZ$.
If $m\ne0$ then $\Stab_{\ttilde G_L}(g)$ is the cyclic subgroup of $\ttilde G_L$
generated by $t^ki$ (if $nk$ is odd) or by $t^kj_L$
(if $nk$ is even); otherwise $\Stab_{\ttilde G_L}(g)$ is generated by two
elements $ij_L$, $t^ki$. Therefore
$\chi_L(\Stab_{\ttilde G_L}(g))=\{1,-1\}$, hence $g$ is defective.
\end{proof}

Denote $\Z:=\ZZ$, $V':=V\in\ZZ[\pi]$ for the equation~\eqref{eqn2_2bar}, and
$\Z:=\ZZ_2$, $V':=V\mod 2\in\ZZ_2[\pi]$ for the equation~$(\bar 4_2^\f)$,
where $\pi=\pi_-$.
For any $g\in\pi=\pi_-$, denote by $\hhat V_g'$, $\ttilde V_{g,L}'$,
$\hat V_{g,L}'$, and $V_{g,L}'$ the projections of the element $V'\in\Z[\pi]$
to $\Z[\hhat\L_g]$, $\Z[\ttilde\L_{g,L}]$, $\Z[\hat\L_{g,L}]$, and
$\Z[\L_{g,L}]$, respectively.

\begin{Thm} \label {thm:partit24f}
Suppose $n\in\ZZ\setminus\{0\}$ and $V\in\ZZ[\pi]$ are defined by an element
$v\in F_2$, $\bar v=\bar\bb^{2n}\ne 1$,
as in~$\eqref{eq:**}$. For every $L\in\ZZ$, consider the
left actions~$\eqref{eq:G'}$, $\eqref{eq:hatG'}$, $\eqref{eq:ttildeG}$ of
the groups
$\hhat G\subset\ttilde G_L\subset\hat G_L\subset G_L$ on $\pi$, and the
corresponding (twisted) augmentations $\hhat\varepsilon_g$,
$\ttilde\varepsilon_{g,L}$, $\hat\varepsilon_{g,L}$,
see~$\eqref{eq:varepsilong}$, $\eqref{eq:hhatvarepsilong}$,
$\eqref{eq:defec}$, $\eqref{eq:defec:hat}$.
Each of the equations~\eqref{eqn2_2bar} and~\eqref{eqn4_2fbar} has the following properties:

{\rm (A)}\qua
For every fixed $L,\ell\in\ZZ$ as in~$\eqref{ell}$, the corresponding
equation with the unknown $Z'\in\Z[\pi]$ splits into the system of independent
equations in the subspaces $(\Z[\L_{g,L}\setminus\{1\}])/\sim$ with the
unknowns $Z'_g\in\Z[\L_{g,L}]$, where $g\in\pi$.

{\rm (B)}\qua The following conditions {\rm(i)}, {\rm(ii)} and {\rm(iii)}
are pairwise equivalent:
\begin{enumerate}
\item[\rm (i)] the equation admits a solution;
\item[\rm (ii)] the equation admits a solution with $\ell=\ell_{\max}$;
\item[\rm (iii)] the following conditions {\rm(iii$_1$)} and {\rm(iii$_2$)} hold for
$\ell:=\ell_{\max}$ (compare Lemmas~\ref{lem:reg} and~\ref{lem:defec}):

{\rm (iii$_1$)}\qua If $n$ is even then, for every pair of
elements $g,h\in\pi\setminus\{\bar\bb^{kn}\mid k\in\ZZ\}$ (thus both
$g,h$ are $\hhat G$--regular) with $h=\bar\bb^{2\ell r}g$, $r\in\ZZ$,
one has
$$\hhat\varepsilon_g(\hhat V'_g)=\hhat\varepsilon_h(\hhat V'_h) \in \Z;$$
{\rm (iii$_2$)}\qua There exists $L\in\ZZ$ satisfying the
following conditions. For every pair of elements $g,h\in\pi$ with
$g\not\in\{\bar\bb^{2\ell k}\bar\aa^m\mid k,m\in\ZZ\}$, $w_-(g)=1$,
and $h=\bar\aa^L\bar\bb^\ell g$ (thus both $g,h$ are non-defective),
one has
$$\ttilde\varepsilon_{g,L}(\ttilde V'_{g,L})
  =\ttilde\varepsilon_{h,L}(\ttilde V'_{h,L})\in\Z.$$
Moreover, if $n$ is odd then, for every $m\in\NN$, the pair of
elements $g=\bar\aa^m$, $h=\bar\aa^L\bar\bb^n g$ (thus both $g,h$
are defective) satisfies the following equality in $\ZZ_2$:
$$\ttilde\varepsilon_{g,L}(\ttilde V'_{g,L})
+\ttilde\varepsilon_{h,L}(\ttilde V'_{h,L})
=\begin{cases} 1,& 0<m\le P(L),\\ 0,& m>P(L) \end{cases}\quad
P(L):=\begin{cases} L-1,& L\ge1,\\ -L,& L\le0. \end{cases}
 $$
\end{enumerate}
\end{Thm}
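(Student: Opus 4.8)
The statement to be proved is Theorem~\ref{thm:partit24f}, a pair of criteria (one for each of the equations \eqref{eqn2_2bar} and \eqref{eqn4_2fbar}) describing precisely when a solution exists. Since the two equations have the same shape, differing only in the coefficient ring ($\Z=\ZZ$ for \eqref{eqn2_2bar}, $\Z=\ZZ_2$ for \eqref{eqn4_2fbar}) and the presence of the odd-$n$ correction term $\bar\bb^n\frac{1-\bar\aa^L}{1-\bar\aa}$, I would treat them in parallel, isolating the two or three places where the argument genuinely bifurcates. The overall strategy mirrors the proof of \fullref{thm:partit34nf}: first reduce to a single fixed $\ell$, then decompose the equation orbit-by-orbit under the relevant group action, and finally read off the augmentation conditions. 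The key structural fact driving everything is that each equation, viewed in $\Z[\pi]$ modulo the kernel $K$ (or $K'$), is of the form $\frac{1-\bar\bb^{2n}}{1-\bar\bb^\ell}\cdot\varphi^L(Z)\equiv(\text{known})$, and both the multiplier $\frac{1-\bar\bb^{2n}}{1-\bar\bb^\ell}$ and the equivalence $g\sim g^{-1}$ are compatible with the orbit decomposition of $\pi$ under the action of $G_L$.

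\textbf{Step 1: part (A), the orbit splitting.} First I would note that $\varphi^L$ is an automorphism of $Q$ (resp.\ $Q'$), so after applying $\varphi^{-L}$ one may drop it and work with $\frac{1-\bar\bb^{2n}}{1-\bar\bb^\ell}\cdot Z$ against the transported right-hand side. The action \eqref{eq:G'} of $G_L$ on $\pi$ partitions $\pi$ into orbits $\L_{g,L}$; the relation $g\sim g^{-1}$ is internal to each orbit (since $i\in G_L$ acts by inversion), and multiplication by $\bar\bb^{\pm\ell}$ (a power of $\hat t_L$, hence in $G_L$) preserves each orbit, so $\frac{1-\bar\bb^{2n}}{1-\bar\bb^\ell}\cdot Z'_g\in\Z[\L_{g,L}]$ whenever $Z'_g\in\Z[\L_{g,L}]$. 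Exactly as in the proof of \fullref{thm:partit34nf}(A), this yields the claimed decomposition into independent equations on the subspaces $(\Z[\L_{g,L}\setminus\{1\}])/\!\sim$. This step is essentially formal.

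\textbf{Step 2: part (B), the implications (i)$\Rightarrow$(ii) and (ii)$\Leftrightarrow$(iii).} For (i)$\Rightarrow$(ii) I would factor $\frac{1-\bar\bb^{2n}}{1-\bar\bb^\ell}=\frac{1-\bar\bb^{2n}}{1-\bar\bb^{\ell_{\max}}}\cdot\frac{1-\bar\bb^{\ell_{\max}}}{1-\bar\bb^\ell}$ (valid since $\ell\mid n$ and $\ell\mid\ell_{\max}$, both being odd divisors, so $\ell\mid\ell_{\max}$ after replacing $\ell_{\max}$ by the greatest odd divisor of $n$); absorbing the second factor into the unknown $Z'$ converts a solution at level $\ell$ into one at level $\ell_{\max}$, with the right-hand side unchanged because it does not involve $\ell$. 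The heart of the theorem is (ii)$\Leftrightarrow$(iii). Fixing $\ell=\ell_{\max}$, the multiplier becomes $1-\bar\bb^{\ell_{\max}}$ up to a unit, and, via the nested subgroups $\hhat G\subset\ttilde G_L\subset\hat G_L\subset G_L$, one refines the orbit decomposition: within each $G_L$-orbit the equation over $\Z[\L_{g,L}]$ is solvable iff $V'_{g,L}$ lies in the image of multiplication by the appropriate $(1-\cdot)$ operator, which by a standard ``image of an augmentation-type map on a free $\ZZ[\text{group}]$-module'' computation is equivalent to the vanishing (or matching) of suitable twisted augmentations on the finer orbits $\hat\L_{g,L}$, $\ttilde\L_{g,L}$, $\hhat\L_g$. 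Here Lemmas~\ref{lem:ab}, \ref{lem:reg}, \ref{lem:defec} and the augmentation formulae \eqref{eq:varepsilon:mod2} do the bookkeeping, and \fullref{pro:orbits24f} (stated but not shown in the excerpt) supplies the explicit shapes of the orbits, in particular distinguishing $\hhat G$-regular from $\hhat G$-singular (equivalently defective) elements. The conditions (iii$_1$), (iii$_2$) are then just the translation of ``solvable on every orbit'' into the language of $\hhat\varepsilon_g$ and $\ttilde\varepsilon_{g,L}$, with the odd-$n$ case carrying the extra correction term $\bar\bb^n\frac{1-\bar\aa^L}{1-\bar\aa}$ contributing the precise right-hand side $\begin{cases}1,&0<m\le P(L)\\0,&m>P(L)\end{cases}$ after one computes $q_N$ of that term via \fullref{lem:xyxy}.

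\textbf{Main obstacle.} The genuinely delicate point is the ``defective'' case in (iii$_2$) when $n$ is odd: on the orbits through elements $\bar\bb^{nk}\bar\aa^m$ the twisted augmentation $\ttilde\varepsilon_{g,L}$ is forced to land in $\ZZ_2$ rather than $\ZZ$ (because the stabilizer contains an element on which $\chi_L=-1$), so the solvability condition is only a mod-$2$ equality and, crucially, it is \emph{not} homogeneous --- the presence of the correction term $\bar\bb^n\frac{1-\bar\aa^L}{1-\bar\aa}$ means the right-hand side of the matching condition depends on $m$ and $L$ in the piecewise way recorded via $P(L)$. Pinning down that this piecewise pattern is exactly right (and, for (ii)$\Rightarrow$(iii), that a \emph{single} $L$ can be chosen to make all these mod-$2$ conditions hold simultaneously) requires a careful analysis of how $1-\bar\bb^{\ell_{\max}}$ acts on $\ZZ[\ttilde\L_{g,L}]$ for defective $g$, using the identification $\ttilde G_L\approx\ZZ\rtimes_{\psi_2}(\ZZ_2\oplus\ZZ_2)$; the geometric interpretation in \fullref{rem:xyxy} (rectangles on a covering of the punctured Klein bottle) is the cleanest way to keep the combinatorics under control. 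I expect this to be where essentially all the real work lies; parts (A) and the reduction (i)$\Rightarrow$(ii) are routine, and the non-defective part of (iii) parallels \fullref{thm:partit34nf} almost verbatim.
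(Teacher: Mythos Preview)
Your outline follows the paper's approach in broad strokes --- orbit decomposition for (A), the factorisation trick for (i)$\Rightarrow$(ii), and an augmentation analysis for (ii)$\Leftrightarrow$(iii) --- and you have correctly located the hard point in the defective odd-$n$ case. However, several technical claims are inaccurate and the crucial implication (iii)$\Rightarrow$(i) is essentially absent.

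First, the minor slips. Multiplication by $\bar\bb^{\pm\ell}$ is \emph{not} the action of an element of $G_L$ when $L\ne0$: the generator $t_L$ acts by $\bar\aa^L\bar\bb$, and $(\bar\aa^L\bar\bb)^\ell=\bar\aa^L\bar\bb^\ell$ for odd $\ell$ in $\pi_-$, not $\bar\bb^\ell$. The paper handles this by rewriting the equation, after undoing $\varphi^L$, with multiplier $\frac{1-\bar\bb^{2n}}{1-\bar\aa^L\bar\bb^\ell}$ (see the congruence the paper labels~\eqref{eq:blue*}); this is what actually lives in $\ZZ[G_L\cdot g]$. Relatedly, your claim that for $\ell=\ell_{\max}$ the multiplier becomes $1-\bar\bb^{\ell_{\max}}$ ``up to a unit'' is false: even for odd $n$ it is $1+\bar\aa^L\bar\bb^{\ell_{\max}}$, and for even $n$ it is a longer geometric sum. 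These are not fatal, but they indicate that the orbit argument needs to be redone with the correct multiplier.

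The more serious gap is (iii)$\Rightarrow$(i). You describe (ii)$\Leftrightarrow$(iii) as a ``standard image-of-augmentation computation'', but the paper's argument is asymmetric: for (ii)$\Rightarrow$(iii) it verifies directly that each building block $U=\frac{1-\bar\bb^{2n}}{1-\bar\aa^L\bar\bb^\ell}\cdot f$ satisfies the augmentation constraints, while for (iii)$\Rightarrow$(i) it must \emph{construct} a $Z$ from $V$. This is done via an explicit canonical decomposition $V\equiv U+W_1+W_2+W_3+R$ (the paper's~\eqref{eq:Vodd} and~\eqref{eq:Veven}), where the $W_j$ are sums over carefully chosen representatives $g_{m,k}=\bar\bb^{2k}\bar\aa^m$ and the coefficients are the twisted augmentations themselves. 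Condition~(iii) is precisely what makes each summand of~\eqref{eq:zvezda} factor as $(1+\bar\aa^L\bar\bb^\ell)\cdot Z_g$ modulo $K$; the remainder $R$ and the $W_3$ terms are handled separately via Corollary~\ref{cor:ab}. None of this is ``standard'' --- it is a bespoke construction that occupies the bulk of the proof, and your sketch gives no indication of how to produce $Z$. You should expect the even-$n$ case (Steps 1--3 of the paper's second half) to require as much care as the odd-$n$ defective analysis you flagged.
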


\begin{Rems} \label{rem:indep}
(A)\qua Condition~(iii) is equivalent to the following condition:

\begin{enumerate}
\item[\rm (iv)] the following conditions~{\rm(iv$_{\mbox{\footnotesize
e}}$)} and~{\rm(iv$_{\mbox{\footnotesize o}}$)} hold
(compare Lemmas~\ref{lem:reg} and~\ref{lem:defec}):

{\rm (iv$_{\mbox{\footnotesize e}}$)}\qua Suppose that $n$ is
even, and put $\ell:=\ell_{\max}$. Then, for every pair of elements
$g,h\in\pi\setminus\{\bar\bb^{kn}\mid k\in\ZZ\}$ (thus both $g,h$
are $\hhat G$--regular) with $h=\bar\bb^{2\ell r}g$, $r\in\ZZ$, one
has
 $$
\hhat\varepsilon_g(\hhat V'_g)=\hhat\varepsilon_h(\hhat V'_h) \in \Z.
 $$
Moreover, there exists $L\in\ZZ$ such that, for every pair of
elements $g,h\in\pi$ with $g\not\in\{\bar\bb^{2\ell k}\bar\aa^m\mid
k,m\in\ZZ\}$, $w_-(g)=1$, and $h=\bar\aa^L\bar\bb^\ell g$ (thus both
$g,h$ are non-defective), one has
 $$
\ttilde\varepsilon_{g,L}(\ttilde V'_{g,L})=\ttilde\varepsilon_{h,L}(\ttilde V'_{h,L})
\in\Z.
 $$
{\rm (iv$_{\mbox{\footnotesize o}}$)}\qua Suppose that $n$ is
odd. Then there exists $L\in\ZZ$ satisfying the following
conditions. For every element
$g\in\pi\setminus\{\bar\bb^{2nk}\bar\aa^m\mid k,m\in\ZZ\}$ with
$w_-(g)=1$ (thus both $g,\bar\aa^L\bar\bb^n g$ are non-defective),
one has
 $$
\hat\varepsilon_{g,L}(\hat V'_{g,L})=0 \in \Z.
 $$
Moreover, if $g=\bar\aa^m$ with $m\in\NN$ (thus both $g,\bar\aa^L\bar\bb^ng$
are defective), then
 $$
\hat\varepsilon_{g,L}(\hat V'_{g,L}) = \left\{ \begin{array}{ll}
1,& 0<m\le P(L),\\ 0,& m>P(L) \end{array} \right.
\mbox{ in }\ZZ_2.
 $$
\end{enumerate}
(B)\qua Condition~(iii$_1$) (respectively, the first part
of~(iv$_{\mbox{\footnotesize e}}$)) is equivalent to the similar condition
where $g,h$ run through the sets
$g\in\{\bar\bb^{2k}\bar\aa^m\mid -\ell<2k<\ell,\ m>0\}\cup
     \{\bar\bb^{2k} \mid 0<2k<\ell \}\cup
     \{\bar\bb^{2k+1}\bar\aa^m\mid 0<2k+1\le\ell,\ m\in\ZZ\}$
(thus $g$ is automatically $\hhat G$--regular), and
$h=\bar\bb^{2\ell r}g$ is $\hhat G$--regular with $1\le r<|n|/\ell$.

(C)\qua The first part of the condition~(iii$_2$) (respectively, the
second part of~(iv$_{\mbox{\footnotesize e}}$) or the first part
of~(iv$_{\mbox{\footnotesize o}}$)) is equivalent to the similar
condition where $g$ runs through the set
$\{\bar\bb^{2k}\bar\aa^m\mid 0<2k<\ell_{\max},\ m\ge0\}$.
\end{Rems}

\begin{proof}
(A)\qua Similar to the proof of \fullref{thm:partit34nf}(A).

(B)\qua (i)$\implies$(ii)\qua Suppose that $(L,\ell,Z')$ is a solution.
Then the left-hand side equals
 $$
\frac{1-\bar\bb^{2n}}{1-\bar\bb^\ell} \cdot \varphi^L(Z')
=\frac {1-\bar\bb^{2n}}{1-\bar\bb^{\ell_{\max}}}\cdot
\frac {1-\bar\bb^{\ell_{\max}}}{1-\bar\bb^\ell} \cdot \varphi^L(Z').
 $$
Since
the right-hand sides of~\eqref{eqn2_2bar} and~\eqref{eqn4_2fbar} do not depend on $\ell$,
the triple
$$\Bigl(L,\ell_{\max},\varphi^{-L}\Bigl(\tfrac{1-\bar\bb^{\ell_{\max}}}
  {1-\bar\bb^\ell}\Bigr) \cdot Z'\Bigr)$$
is a solution.

(ii)$\implies$(iii)\qua Consider the case of the equation~\eqref{eqn2_2bar}.
Suppose $(L,\ell_{\max},Z)$ is a solution, and denote $\ell:=\ell_{\max}$.
Observe that, under the assumption $\ell=\ell_{\max}$, the
equation~\eqref{eqn2_2bar} is equivalent to
the following congruence in $\ZZ[\pi_-]$ modulo $K$:
\begin{equation}\label{eq:blue*}
V \equiv \frac{1-\bar\bb^{2n}}{1-\bar\aa^L \bar\bb^{\ell} }\cdot (Z+C_1)
   + \left\{ \begin{array}{cl} 0, & n\mbox{ even}, \\
 - \frac{1-\bar\aa^L}{1-\bar\aa}, & n\mbox{ odd}, \end{array} \right.
\end{equation}
where $C_1:=0$ if $n$ is even,
$C_1:=                      \frac{1-\bar\aa^L}{1-\bar\aa}$ if $n$ is odd and $>0$,
$C_1:=-\bar\aa^L\bar\bb^{-n}\frac{1-\bar\aa^L}{1-\bar\aa}$ if $n$ is odd and $<0$.
The first summand of the right-hand side of this congruence is a
linear combination of the elements
$\frac{1-\bar\bb^{2n}}{1-\bar\aa^L \bar\bb^{\ell} } f\in\ZZ[\pi]$, $f\in\pi$,
with integer coefficients, and thus a linear combination of the elements
$$U=(1+\bar\bb^{2\ell}+\bar\bb^{4\ell}+\ldots+\bar\bb^{2|n|-2\ell})
   (1+\bar\aa^{L}\bar\bb^{\ell})f,\quad f\in\pi.$$
In particular, it is a linear combination of the elements
$\frac{1-\bar\bb^{2n}}{1-\bar\bb^{2\ell}} f$, $f\in\pi$, and thus
a linear combination of the elements
$$W=(1+\bar\bb^{2\ell}+\bar\bb^{4\ell}+\ldots+\bar\bb^{2|n|-2\ell})
f,\quad f\in\pi.$$
In order to prove (iii$_1$), consider the polynomial $W\in\ZZ[\pi]$
from above and observe that, for any $\hhat G$--regular element
$h_r:=\bar\bb^{2\ell r}f$, $r\in\ZZ$, the $\chi$--twisted $\hhat
G$--augmentation of $\hhat W_{h_r}\in\ZZ[\hhat \L_{h_r}]$ (based at
$h_r$) equals
\begin{equation}\label{eq:iii1}
  \hhat\varepsilon_{h_r}(\hhat W_{h_r}) =
 -\hhat\varepsilon_{h_r^{-1}}(\hhat W_{h_r^{-1}}) = \left\{ \begin{array}{ll}
                1,&q\mbox{ even and $p\ne0$, or }\ell \nmid q,\\
                0,&q\mbox{ odd or $p=0$, and }\ell\mid q, \end{array} \right.
\end{equation}
where $f=\bar\bb^q\bar\aa^p$, the ``canonical'' form of $f\in\pi$,
similar to~\eqref{eq:canon}. Obviously, for any element
$h\in\pi\setminus\{h_r,h_r^{-1}\mid r\in\ZZ\}$, the $\chi$--twisted
$\hhat G$--augmentation of $\hhat W_h$ (based at $h$) vanishes.
Observe also that the right-hand side of~\eqref{eq:iii1} does not
depend on $r$. This shows that the element $W\in\ZZ[\pi]$ satisfies
the condition~(iii$_1$). For $n$ even, this implies that $V$ also
satisfies~(iii$_1$), since $V$ is a linear combination of such
elements $W$, together with the elements $f+f^{-1}$,
$f\in\pi\setminus\{1\}$, and $1\in\pi$.

In order to prove (iii$_2$), let us consider the integer $L$ and the polynomial
$U\in\ZZ[\pi]$ from above. Recall that $U$ has the form
$U=(1+\bar\bb^{2\ell}+\bar\bb^{4\ell}+\ldots+\bar\bb^{2|n|-2\ell})
   (1+\bar\aa^L \bar\bb^{\ell})\cdot f$, for some $f\in\pi$,
$f=\bar\bb^q\bar\aa^p$, $p,q\in\ZZ$. Take any
$g\in\pi\setminus\{\bar\bb^{2\ell r}\bar\aa^m\mid r,m\in\ZZ\}$ with
$w_-(g)=1$; thus the elements $g$ and $h:=\hat t_L\cdot g=\bar\aa^L
\bar\bb^\ell g$ are automatically non-defective, see \fullref{lem:defec}. If $g$ belongs to the set
$S_{f,L}:=\{(\bar\aa^L\bar\bb^\ell)^rf\mid r\in\ZZ\}$ then
$\ell\nmid q$, thus the $\chi_L$--twisted $\ttilde
G_L$--augmentation of $\ttilde U_{g,L}\in\ZZ[\tilde\L_{g,L}]$ (based
at $g$) equals
\begin{multline*}
\ttilde\varepsilon_{g,L}(\ttilde U_{g,L})
= \ttilde\varepsilon_{j_L\cdot g^{-1},L}(\ttilde U_{j_L\cdot g^{-1},L}) \\
=-\ttilde\varepsilon_{g^{-1},L}(\ttilde U_{g^{-1},L})
=-\ttilde\varepsilon_{j_L\cdot g,L}(\ttilde U_{j_L\cdot g,L})
= 1 \in\ZZ.
\end{multline*}
If $g,g^{-1},j_L\cdot g,j_L\cdot g^{-1}\not\in S_{f,L}$
then the $\chi_L$--twisted $\ttilde G_L$--augmentation of $\ttilde U_{g,L}$
(based at $g$) vanishes. Observe that $g\in S_{f,L}$ if and only if
$h=\hat t_L\cdot g\in S_{f,L}$,
for any $g\in\pi$ (without assumption $w_-(g)=1$). Hence
$g^{-1}\in S_{f,L}$ if and only if $j_L\cdot h\in S_{f,L}$;
$j_L\cdot g\in S_{f,L}$ if and only if $h^{-1}\in S_{f,L}$;
$j_L\cdot g^{-1}\in S_{f,L}$ if and only if $j_L\cdot h^{-1}\in S_{f,L}$.
Together with the above properties of the $\chi_L$--twisted
$\ttilde G_L$--augmentation, this proves the desired equality
$\ttilde\varepsilon_{g,L}(\ttilde U_{g,L})
=\ttilde\varepsilon_{h,L}(\ttilde U_{h,L})\in\ZZ$, thereby proving the first
part of~(iii$_2$) for the element $U\in\ZZ[\pi]$.
Therefore $V$ also satisfies the first part of~(iii$_2$), since $V$ is a linear
combination of such elements $U$, together with the elements $f+f^{-1}$,
$\bar\aa^r$, and $1\in\pi$, where $f\in\pi\setminus\{1\}$, $r\in\ZZ$.

Suppose that $n$ is odd, and take any element $g=\bar\aa^m$ with $m\in\NN$.
Denote, similarly to above, $h:=\bar\aa^L\bar\bb^n g$ (thus both $g,h$ are
defective). It is obvious that
$\ttilde\varepsilon_{g,L}(\ttilde U_{g,L})=0\in\ZZ_2$
if and only if $g,g^{-1}\not\in S_{f,L}$, moreover
$\ttilde\varepsilon_{h,L}(\ttilde U_{g,L})=0\in\ZZ_2$
if and only if $h,j_L\cdot h\not\in S_{f,L}$.
Since $g,g^{-1}\not\in S_{f,L}$ is equivalent to $h,j_L\cdot h\not\in S_{f,L}$,
we obtain
$\ttilde\varepsilon_{g,L}(\ttilde U_{g,L})
+\ttilde\varepsilon_{h,L}(\ttilde U_{h,L})=0$.
Therefore
$\ttilde\varepsilon_{g,L}(\ttilde V_{g,L})
+\ttilde\varepsilon_{h,L}(\ttilde V_{h,L})
=\ttilde\varepsilon_{g,L}(\ttilde D_{g,L})
+\ttilde\varepsilon_{h,L}(\ttilde D_{h,L})$ where
$D:=-\frac{1-\bar\aa^L}{1-\bar\aa}\in\ZZ[\pi]$, since $V-D$ is a linear
combination of such elements $U$, together with the elements $f+f^{-1}$,
$f\in\pi\setminus\{1\}$, and $1\in\pi$. One easily computes
$$
\ttilde\varepsilon_{h,L}(\ttilde D_{h,L})=0, \quad
\ttilde\varepsilon_{g,L}(\ttilde D_{g,L})
=\left\{ \begin{array}{ll} 1,& 0<m\le P(L),\\ 0,& m>P(L) \end{array} \right.
\qquad \mbox{in }\ZZ_2.
$$
This completes the proof of~(iii$_2$).

Consider the case of the equation~\eqref{eqn4_2fbar}.
Suppose $(L,\ell_{\max},Z')$ is a solution, and denote $\ell:=\ell_{\max}$.
It follows from~\eqref{eqn4_2fbar} that the
congruence~\eqref{eq:blue*} in $\ZZ_2[\pi_-]$ holds modulo $K'$,
where the coefficients are reduced modulo 2. It follows from the case
of~\eqref{eqn2_2bar} that $V'$ satisfies the $\mod 2$ analogue of the condition~(iii).

(iii)$\implies$(i)\qua Let us consider the case of the
equation~\eqref{eqn2_2bar}.
Suppose $n$ is odd, put $\ell:=\ell_{\max}=|n|$.

\textbf{Step 1}\qua For every $L\in\ZZ$ and for every polynomial $V\in\ZZ[\pi]$, there
exists a (unique) presentation satisfying the following congruence modulo $K$:
\begin{equation}\label{eq:Vodd}
\V := V+\frac{1-\bar\aa^L}{1-\bar\aa} \equiv U + W_1 + W_2 + W_3 + R,
\end{equation}
where $U$ is a linear combination of $(1-\bar\bb^{2n})h$, $h\in\pi$, while
$W_1,W_2,W_3,R\in\ZZ[\pi]$ have the form
\begin{align*}
W_1 &=\sum_{m>0,\ 0<2k<\ell}
  (a^+_{g_{m,k}}g_{m,k} + a^-_{g_{m,k}}ij_L\cdot g_{m,k}
  +b^+_{g_{m,k}}h_{m,k} + b^-_{g_{m,k}}ij_L\cdot h_{m,k}), \\
W_2 &= \sum_{0<2k<\ell}
(a_{g_{0,k}}g_{0,k} + b_{g_{0,k}}h_{0,k}), \\
W_3 &= \sum_{m>0}
(a_{g_{m,0}}g_{m,0} + b^+_{g_{m,0}}h_{m,0} + b^-_{g_{m,0}}ij_L\cdot h_{m,0}), \\
R &= b_1 h_{0,0},
\end{align*}
where
$g_{m,k}:=\bar\bb^{2k}\bar\aa^m$, $h_{m,k}:=\bar\aa^L\bar\bb^{\ell}g_{m,k}$,
and $a_g^\pm,b_g^\pm,a_g,b_g\in\ZZ$ with the additional condition that
$b_1,b^+_{\bar\aa^m}\in\{0,1\}$,
$b^-_{\bar\aa^m}\in\{ a_{\bar\aa^m}-b^+_{\bar\aa^m},\
                      a_{\bar\aa^m}-b^+_{\bar\aa^m}+1 \}$, $m>0$
(these coefficients correspond to $\hhat G$--singular elements $h_{m,0}$).
Here uniqueness follows from the equalities
\begin{align*}
a^+_g&=\hhat\varepsilon_g(\hhat \V_g), &
a^-_g&=\hhat\varepsilon_{ij_L\cdot g}(\hhat \V_{ij_L\cdot g}), \\
b^+_g&=\hhat\varepsilon_h(\hhat \V_h), &
b^-_g&=\hhat\varepsilon_{ij_L\cdot h}(\hhat \V_{ij_L\cdot h}), &
h&:=\bar\aa^L\bar\bb^{\ell}g,
\end{align*}
while
$a_g=\hhat\varepsilon_g(\hhat \V_g)$ for $W_2, W_3$;
$b_g=\hhat\varepsilon_h(\hhat \V_h)$ for $W_2$;
furthermore
$b_g^+\mmod2=\hhat\varepsilon_h(\hhat\V_h)$ and
$b_g^-\mmod2=\hhat\varepsilon_{ij_L\cdot h}(\hhat\V_{ij_L\cdot h})\in\ZZ_2$
for $\hhat G$--singular $h=h_{m,0}$ in $W_3,R$, where $\hhat\varepsilon_h$ is
defined similarly to the case of $\hhat G$--regular $h$, by reducing $\mmod 2$.

\textbf{Step 2}\qua Observe that every summand of the sums $W_1,W_2,W_3$ has the form
\begin{align}
\notag a^+g &+ a^-ij_L\cdot g + b^+h + b^-ij_L\cdot h \\
\notag &\equiv a^+g - a^-(j_L\cdot g+h+h^{-1}) + b^+h - b^-(j_L\cdot h +g+g^{-1}) \\
\label{eq:zvezda}
&= (a^+-b^-) g + (b^+-a^-) h - a^-(h^{-1}+j_L\cdot g) - b^-(g^{-1}+j_L\cdot h),
\end{align}
where $w_-(g)=1$ and $h:=\bar\aa^L\bar\bb^{\ell}g$. Here
$a^\pm:=a^\pm_g$, $b^\pm:=b^\pm_g$ for $W_1$;
$a^+:=a_g$, $a^-:=0$, $b^+:=b_g$, $b^-:=0$ for $W_2$;
$a^+:=a_g$, $a^-:=0$, $b^\pm:=b_g^\pm$ for $W_3$.

Let us show that $a^+-b^- = b^+-a^-$, provided that $L\in\ZZ$ is taken as in
the condition~(iii). Indeed, from the above formulae for
$a^{\pm}_g$, $b^{\pm}_g$, we have that, for $W_1$ and $W_2$,
\begin{align*}
a^+-b^- - (b^+-a^-) &= a^+ + a^- - (b^+ + b^-) \\
&= a^+_g+a^-_g-(b^+_g+b^-_g) \\
&= \hhat\varepsilon_g(\hhat\V_g) + \hhat\varepsilon_{ij_L\cdot g}(\hhat\V_{ij_L\cdot g})
- (\hhat\varepsilon_h(\hhat\V_h) +
\hhat\varepsilon_{ij_L\cdot h}(\hhat\V_{ij_L\cdot h})) \\
&= \ttilde\varepsilon_{g,L}(\ttilde\V_{g,L}) -
\ttilde\varepsilon_{h,L}(\ttilde\V_{h,L}) \\
&= \hat\varepsilon_{g,L}(\hat\V_{g,L}) ,
\end{align*}
see~\eqref{eq:varepsilon:mod2}.
Now, if $L\in\ZZ$ is taken as in the condition~(iii), then the latter
expression vanishes, due to~(iv$_{\mbox{\footnotesize o}}$) or~(iii$_2$).
Similarly, for $g=\bar\aa^m$, $m>0$, as in $W_3$, we obtain
$(b^+_g+b^-_g-a_g)\mod2=\hat\varepsilon_{g,L}(\hat\V_{g,L})=0\in\ZZ_2$,
due to~\eqref{eq:varepsilon:mod2} and the second part of~(iii$_2$).
Since $b^+_g+b^-_g-a_g\in\{0,1\}$, see above, we have $a_g=b^+_g+b^-_g$.

Since $a^+-b^- = b^+-a^-$, the expression~\eqref{eq:zvezda} equals
$$(a^+{-}b^-) (1{+}\bar\aa^L\bar\bb^{\ell})g
{-} a^-(1{+}\bar\aa^L\bar\bb^{\ell})h^{-1}
{-} b^-(1{+}\bar\aa^L\bar\bb^{\ell})j_L\cdot h
= (1{+}\bar\aa^L\bar\bb^{\ell}) Z_g,$$
where $Z_g:=(a^+-b^-)g - a^-h^{-1} - b^-j_L\cdot h$.

\textbf{Step 3}\qua For the remainder term $R$, observe that
$h_{0,0}=\bar\aa^L\bar\bb^{\ell}\equiv 1+\bar\aa^L\bar\bb^{\ell}$.
This shows that every summand in the right-hand side of~\eqref{eq:Vodd}
is divisible (modulo $K$) by $1+\bar\aa^L\bar\bb^{\ell}$. Hence
it is also divisible by
$\smash{\frac{1-\bar\bb^{2n}}{1-\bar\aa^L\bar\bb^\ell}}$,
since $\ell=|n|$. This means that $V$ has the form~\eqref{eq:blue*}
and therefore~\eqref{eqn2_2bar} admits a solution.

Suppose that $n$ is even and that $V\in\ZZ[\pi]$ satisfies~(iii$_1$), or the
first part of~(iv$_{\mbox{\footnotesize e}}$). Put $\ell:=\ell_{\max}$.

\textbf{Step 1}\qua For every $L\in\ZZ$ and for every polynomial $V\in\ZZ[\pi]$
satisfying~(iii$_1$), there exists a (unique) presentation satisfying the
following congruence modulo $K$:
\begin{equation}\label{eq:Veven}
V \equiv U + W_1 + W_2 + W_3 + R,
\end{equation}
where $U$ is a linear combination of $(1-\bar\bb^{2n})h$, $h\in\pi$, while
$W_1,W_2,W_3,R\in\ZZ[\pi]$ have the form
\begin{align*}
W_1 &= \hspace{-3mm}\sum_{\begin{array}{c}\scriptstyle m>0,\\[-1.5ex]
  \scriptstyle0<2k<\ell\end{array}}\hspace{-2mm}
\frac {1-\bar\bb^{2n}}{1-\bar\bb^{2\ell}}
(a^+_{g_{m,k}}g_{m,k} + a^-_{g_{m,k}}ij_L\cdot g_{m,k}
+b^+_{g_{m,k}}h_{m,k} + b^-_{g_{m,k}}ij_L\cdot h_{m,k}), \\
W_2 &= \sum_{0<2k<\ell}
\frac {1-\bar\bb^{2n}}{1-\bar\bb^{2\ell}}
(a_{g_{0,k}}g_{0,k} + b_{g_{0,k}}h_{0,k}), \\\
W_3 &= \sum_{m>0}
\frac {1-\bar\bb^{2n}}{1-\bar\bb^{2\ell}} a_{g_{m,0}}g_{m,0}, \\
R &= a_{g_{0,n/2}}g_{0,n/2} = a_{\bar\bb^n} \bar\bb^n,
\end{align*}
where
$g_{m,k}:=\bar\bb^{2k}\bar\aa^m$,
$h_{m,k}:=\bar\aa^L\bar\bb^{\ell}g_{m,k}$,
$a_g^\pm,b_g^\pm,a_g,b_g\in\ZZ$ with the additional condition that
$a_{\bar\bb^n}\in\{0,1\}$ (this coefficient corresponds to the
$\hhat G$--singular elements $\bar\bb^{(2q+1)n}$, $q\in\ZZ$). Here uniqueness
follows from the equalities
\begin{align*}
a^+_g&=\hhat\varepsilon_g(\hhat V_g), &
a^-_g&=\hhat\varepsilon_{ij_L\cdot g}(\hhat V_{ij_L\cdot g}), \\
b^+_g&=\hhat\varepsilon_h(\hhat V_h), &
b^-_g&=\hhat\varepsilon_{ij_L\cdot h}(\hhat V_{ij_L\cdot h}), &
h&:=\bar\aa^L\bar\bb^{\ell}g,
\end{align*}
moreover
$a_g=\hhat\varepsilon_g(\hhat V_g)$ (as an equality modulo 2 if $g=\bar\bb^n$),
$b_g=\hhat\varepsilon_h(\hhat V_h)$
(observe that, for any $g,h$ as in $W_1,W_2,W_3$, and for any $q\in\ZZ$,
the elements $\bar\bb^{2\ell q}g,\bar\bb^{2\ell q}h$ are $\hhat G$--regular).
Here $g_{0,n/2}=\bar\bb^n$ appears in the remainder term $R$
(corresponding to the case $k=m=0$),
since the condition~(iii$_1$), or the first part
of~(iv$_{\mbox{\footnotesize e}}$), poses no restriction to
the coefficients of $V$ at $\bar\bb^{(2q+1)n}$, $q\in\ZZ$.
Actually, $V$ admits similar presentations, with the additional terms
\begin{align*}
\frac {1-\bar\bb^{2n}}{1-\bar\bb^{2\ell}}
(b^+_{g_{m,0}}h_{m,0}+b^-_{g_{m,0}}ij_L\cdot h_{m,0}) &\equiv 0 &  &
\text{in } W_3,\ m>0,\\
\text{and}\qquad
\frac {1-\bar\bb^{2n}}{1-\bar\bb^{2\ell}}(a_1+b_1 h_{0,0}) &\equiv
  a_1\bar\bb^n & &\text{in } R,
\end{align*}
where the coefficients $b^\pm_{g_{m,0}}$, $a_1$, $b_1$ are arbitrary
integers. In the presentation~\eqref{eq:Veven}, these terms are omitted, in
order to have uniqueness.

\textbf{Step 2}\qua Fix arbitrary $g\in\pi$, $q\in\ZZ$, denote
$\tilde g:=\bar\bb^{2\ell q}g$, $\tilde{\tilde g} :=\bar\bb^{-2\ell q}g$,
$h:=\bar\aa^L\bar \bb^\ell g$.
One easily observes
\begin{align*}
\widetilde{ij_L\cdot g}&{=}ij_L\cdot \tilde g, &
\tilde h&{=}\widetilde{\bar\aa^L\bar\bb^{\ell}g}{=}\bar\aa^L\bar\bb^{\ell}\tilde g,
&
\widetilde{ij_L\cdot h}&{=}ij_L\cdot \tilde h{=}ij_L\cdot \bar\aa^L\bar\bb^{\ell}\tilde g, \\
\widetilde{g^{-1}}&{=}(\tilde{\tilde g})^{-1}, &
\widetilde{h^{-1}}&{=}(\tilde{\tilde h})^{-1}, &
\widetilde{j_L\cdot g}&{=}j_L\cdot \tilde{\tilde g}, &\hspace{-6mm}
\widetilde{j_L\cdot h}&{=}j_L\cdot \tilde{\tilde h}.
\end{align*}
Hence, by applying to each summand of the sums $W_1,W_2$ the
arguments of Step~2 of the case of $n$ odd, it follows that the
condition~(iii$_2$), or the second part of~(iv$_{\mbox{\footnotesize e}}$),
implies
 $$
\frac {1-\bar\bb^{2n}}{1-\bar\bb^{2\ell}}
(a^+_gg + a^-_gij_L\cdot g + b^+_gh + b^-_gij_L\cdot h)
\equiv
\frac {1-\bar\bb^{2n}}{1-\bar\bb^{2\ell}} (1+\bar\aa^L\bar\bb^\ell) E_g
=
\frac {1-\bar\bb^{2n}}{1-\bar\aa^L\bar\bb^\ell} E_g,
 $$
for some $E_g\in\ZZ[\pi]$, where $g\in\pi$ as in $W_1,W_2$. Therefore
$W_1+W_2=\frac {1-\bar\bb^{2n}}{1-\bar\aa^L\bar\bb^\ell}E$, for $E:=\sum_g E_g$.

\textbf{Step 3}\qua For the term $W_3$, we observe that
$$\frac {1-\bar\bb^{2n}}{1-\bar\bb^{2\ell}}\bar\aa^m
\equiv \frac {1-\bar\bb^{2n}}{1-\bar\aa^L\bar\bb^\ell}F,$$
for some $F\in\ZZ[\pi]$, due to \fullref{cor:ab}(c). For the remainder
term $R$, we observe that
$$\bar\bb^n \equiv \frac
{1-\bar\bb^{2n}}{1-\bar\aa^L\bar\bb^\ell}\bar\aa^L\bar\bb^{\ell-n},$$
due to \fullref{cor:ab}(a).
This shows that every summand of the right-hand side of~\eqref{eq:Veven}
is divisible (modulo $K$) by
$\smash{\frac{1-\bar\bb^{2n}}{1-\bar\aa^L\bar\bb^\ell}}$.
This means that $V$ has the form~\eqref{eq:blue*},
therefore~\eqref{eqn2_2bar}
admits a solution.

For the equation~\eqref{eqn4_2fbar}, the implication (iii)$\implies$(i)
immediately follows from the case of the equation~\eqref{eqn2_2bar}.
\end{proof}

Define the notions of a defective $\tilde G$--orbit and a defective
$\hat G_L$--orbit, similarly to the definition of a defective
$\ttilde G_L$--orbit, see above~\eqref{eq:defec}. Below we consider
a set $S$ as a subset of the abelian group $\ZZ[S]$.

\begin{Pro} \label {pro:orbits24f}
Suppose that $\bar v=\bar u^{2\mu}=\bar\bb^{2n}$ in the group
$$\pi=\pi_-=\langle\aa,\bb\mid\aa\bb\aa\bb^{-1}\rangle,$$
where
$n\in\ZZ\setminus\{0\}$, $\bar u=\bar\bb^{\ell}$,
$\mu=\smash{\frac{n}{|n|}}2^s$, $s\ge0$, $\ell=\ell_{\max}>0$ odd,
$n=\mu\ell$. Consider the corresponding actions~$\eqref{eq:hatG'}$,
$\eqref{eq:ttildeG}$ of the groups $\hhat G\subset\ttilde
G_L\subset\hat G_L$ on $\pi$. Then the orbits $\hhat \L_h$,
$\ttilde\L_{h,L}$, $\hat \L_{h,L}$, $h\in\pi$, under these actions
have the following form:

{\rm (A)}\qua For $h=\bar\bb^{2q}\bar\aa^p$, $p,q\in\ZZ$, one has
\begin{align*}
\hhat \L_h
&= \{\bar v^k h^{\pm1}\mid k\in \ZZ\}
= \{\bar\bb^{2q+2kn}\bar\aa^p\mid k\in \ZZ\} \cup
  \{\bar\bb^{-2q+2kn}\bar\aa^{\;-p}\mid k\in \ZZ\} , \\
 \ttilde\L_{h,L}
&= \{\bar v^k h^{\pm1}\mid k\in \ZZ\} \cup
\{\bar v^k\bar\aa^L\bar\bb^\ell h^{\pm1} \bar\aa^L\bar\bb^{-\ell} \mid
k\in\ZZ\} \\
&= \{\bar\bb^{2q+2kn}\bar\aa^p\mid k\in \ZZ\} \cup
  \{\bar\bb^{-2q+2kn}\bar\aa^{\;-p}\mid k\in \ZZ\} \\
&\qquad \cup
  \{\bar\bb^{2q+2kn}\bar\aa^{\;-p}\mid k\in \ZZ\} \cup
  \{\bar\bb^{-2q+2kn}\bar\aa^p\mid k\in \ZZ\} , \\
 \hat \L_{h,L}
&= \{\bar u^{2k} h^{\pm1}\mid k\in \ZZ\} \sqcup
  \{\bar u^{2k} (\bar\aa^L\bar\bb^\ell h)^{\pm1}\mid k\in \ZZ\} \\
&= (\{\bar\bb^{2q+2k\ell}\bar\aa^p\mid k\in \ZZ\}
 \cup \{\bar\bb^{-2q+2k\ell}\bar\aa^{\;-p}\mid k\in \ZZ\}) \\
&\qquad\sqcup
  (\{\bar\bb^{-2q-\ell+2k\ell}\bar\aa^{p-L}\mid k\in \ZZ\} \cup
  \{\bar\bb^{2q+\ell+2k\ell}\bar\aa^{p-L}\mid k\in \ZZ\}) .
\end{align*}
{\rm (B)}\qua For $h=\bar\bb^{2q+\ell}\bar\aa^p$, $p,q\in\ZZ$, one has
\begin{align*}
\hhat \L_h
&= \{\bar v^k h^{\pm1}\mid k\in \ZZ\}
= \{\bar\bb^{2q+\ell+2kn}\bar\aa^p\mid k\in \ZZ\} \cup
  \{\bar\bb^{-2q-\ell+2kn}\bar\aa^p\mid k\in \ZZ\}, \\
  \ttilde\L_{h,L}
&= \{\bar v^k h^{\pm1}\mid k\in \ZZ\} \cup \{\bar
v^k\bar\aa^L\bar\bb^\ell h^{\pm1} \bar\aa^L\bar\bb^{-\ell} \mid
k\in\ZZ\} \\
&= \{\bar\bb^{2q+\ell+2kn}\bar\aa^p\mid k\in \ZZ\}
  \cup \{\bar\bb^{-2q-\ell+2kn}\bar\aa^p\mid k\in \ZZ\} \\
&\qquad \cup
\{\bar\bb^{2q+\ell+2kn}\bar\aa^{\;-p-2L}\mid k\in \ZZ\} \cup
      \{\bar\bb^{-2q-\ell+2kn}\bar\aa^{\;-p-2L}\mid k\in \ZZ\} , \\
 \hat \L_{h,L}
&= \{\bar u^{2k} h^{\pm1}\mid k\in \ZZ\} \sqcup
  \{\bar u^{2k} (\bar\aa^L\bar\bb^{-\ell} h)^{\pm1}\mid k\in \ZZ\} \\
&= (\{\bar\bb^{2q+\ell+2k\ell}\bar\aa^p\mid k\in \ZZ\}
  \cup \{\bar\bb^{-2q-\ell+2k\ell}\bar\aa^p\mid k\in \ZZ\}) \\
&\qquad \sqcup
  (\{\bar\bb^{2q+2k\ell}\bar\aa^{p+L}\mid k\in \ZZ\} \cup
  \{\bar\bb^{-2q+2k\ell}\bar\aa^{\;-p-L}\mid k\in \ZZ\}) .
\end{align*}
{\rm(C)}\qua For any $h\in F_2$, let us enumerate consecutively the subsets
appearing in the above decompositions of the orbits $\hhat \L_h$,
$\ttilde\L_{h,L}$ and $\hat \L_{h,L}$, thus the decompositions have the
forms
\begin{align*}
\hhat \L_h&=S_{h,1}\cup S_{h,2}, \\
\ttilde\L_{h,L}&=S_{h,3}\cup S_{h,4}\cup S_{h,5}\cup S_{h,6}, \\
\hat \L_{h,L}&=(S_{h,7}\cup S_{h,8})\sqcup (S_{h,9}\cup S_{h,10}).
\end{align*}
Then, for any non-defective orbit, the restriction to this orbit of
the corresponding twisted augmentation (based at $h$) sends
$S_{h,2k+1}\to 1$, $S_{h,2k}\to -1$, see~\eqref{eq:hhatvarepsilong},
\eqref{eq:defec} and~\eqref{eq:defec:hat}.
\end{Pro}

\begin{proof}
The above presentations of the orbits $\hhat \L_h$ follow from
\fullref{pro:orbits34nf} with $\varepsilon=-1$, $\bar u=\bar\bb^{2n}$.
In other cases, the proof is similar to the proof of
\fullref{pro:orbits34nf}, using~\eqref{eq:G'}--\eqref{eq:actions}.
\end{proof}

\begin{Ex} \label{ex:Wicks}
Let us investigate existence of non-faithful solutions of the equation~\eqref{eqn2'}
with $v=B_\aa B_{\aa^{-1}}$, $B=\aa\bb\aa\bb^{-1}$, $\vartheta=-1$,
see \fullref{subsec:appl}.
Observe that this is a ``mixed'' case (see \fullref{tbl2}) with $\bar v=1\in\pi$,
$V=\aa+\aa^{-1}\in\ZZ[\pi]$, thus $p_Q(V)=0\in Q$. This means that the first
and the second derived equations~\eqref{eqn2_1} and~\eqref{eqn2_2} admit solutions,
see \fullref{subsec:2der} and \fullref{lem:sol21}. However, we will
use the method of Wicks~\cite{W} to show that
the equation~\eqref{eqn2'} does not admit non-faithful solutions.
In more detail, consider the cyclically reduced word $W$ obtained from the
right-hand side $vB^{-1}v^{-1}B=[v,B^{-1}]$ of the equation~\eqref{eqn2'}. For each
word $W_i$ obtained from $W$ by cyclic permutation, see below, we will find all
presentations of this
word in the form $abca^{-1}b^{-1}c^{-1}$ due to Wicks, see~\cite{W}.
We will observe that the corresponding ``canonical'' solutions
$(x,y)=(ab,cb)$ are faithful. This allows one to conclude that the equation
\eqref{eqn2'} with $v=B_\aa B_{\aa^{-1}}$, $B=\aa\bb\aa\bb^{-1}$, $\vartheta=-1$
has only faithful solutions.

Here $W_i=V_iU_i$ where $U_i,V_i$ are the subwords of $W$ which are defined by
the properties $W=U_iV_i$, $|U_i|=i$, and $|W|=|U_i|+|V_i|$, where $|\cdot|$
means the length of the word.
The cyclically reduced word $W$ has the form
 $$
W=xxyxy^{-1}x^{-1}yxy^{-1}xyx^{-1}y^{-1}x^{-1}x^{-1}yx^{-1}y^{-1}
xyx^{-1}y^{-1}x^{-1}yxy^{-1},
 $$
and it has length~26.
By a straightforward calculation, the words $W_i,W_{i+13}$ with
$i=0,1,6,7,8,9,10$, and only such, have the Wicks form
$abca^{-1}b^{-1}c^{-1}$, or $ded^{-1}e^{-1}$, with non-empty subwords
$a,b,c,d,e$:
\begin{align*}
W=&W_0 && \text{with } (a,b,c)=
  (\aa,\ \aa\bb\aa\bb^{-1}\aa^{-1}\bb\aa\bb^{-1}\aa,\  \bb\aa^{-1}\bb^{-1});\\
&W_1 && \text{with } (a,b,c)=
  (\aa\bb\aa\bb^{-1}\aa^{-1}\bb\aa\bb^{-1}\aa,\ \bb\aa^{-1}\bb^{-1},\ \aa^{-1});\\
&W_6 && \text{with } (d,e)=
  (\bb\aa\bb^{-1},\
  \aa\bb\aa^{-1}\bb^{-1}\aa^{-1}\aa^{-1}\bb\aa^{-1}\bb^{-1}\aa);\\
&W_7 && \text{with } (d,e)=
(\aa,\ \bb^{-1}\aa\bb\aa^{-1}\bb^{-1}\aa^{-1}
  \aa^{-1}\bb\aa^{-1}\bb^{-1}\aa\bb);\\
&W_8 && \text{with } (d,e)=
  (\bb^{-1}\aa\bb\aa^{-1}\bb^{-1}\aa^{-1}\aa^{-1}\bb\aa^{-1}\bb^{-1}\aa\bb,\ 
  \aa^{-1});\\
&W_9 && \text{with } (d,e)=
  (\aa\bb\aa^{-1}\bb^{-1}\aa^{-1}\aa^{-1}\bb\aa^{-1}\bb^{-1}\aa,\
  \bb\aa^{-1}\bb^{-1});\\
&W_{10} && \text{with } (a,b,c)=
  (\bb\aa^{-1}\bb^{-1},\ \aa^{-1},\
  \aa^{-1}\bb\aa^{-1}\bb^{-1}\aa\bb\aa^{-1}\bb^{-1}\aa^{-1}).
\end{align*}
Here the Wicks form for the word $W_{i+13}$ is obtained from the Wicks form of
$W_i$ in the obvious way.
\end{Ex}

\begin{Ex} \label{ex:Wicks2}
Similarly to \fullref{ex:Wicks}, one can investigate existence of
solutions of the equations~\eqref{eqn3'} and~\eqref{eqn4'} with
$v=B_\aa B_{\aa^{-1}}$, $\vartheta=-1$, where $B=\aa\bb\aa^{-1}\bb^{-1}$
for~\eqref{eqn3'}, while $B=\aa\bb\aa\bb^{-1}$ for~\eqref{eqn4'}, see
\fullref{subsec:appl}.
In this case, one considers the ``non-orientable'' forms
$abcbac^{-1}$ and $a^2bc^2b^{-1}$, due to Wicks~\cite{W2}.
\end{Ex}

\section{Tables for the ``mixed'' cases of Tables~\ref{tbl1} and~\ref{tbl2}}
\label{sec:Quadtab}

In this section, we summarize the main results of Sections~\ref{sec:Quad3}
and~\ref{sec:quad4} in two tables below. \fullref{tbl3} deals with
faithful solutions in
the so called ``mixed'' case~(4c) of \fullref{tbl1}, while \fullref{tbl4} deals with
non-faithful solutions in the ``mixed'' cases~(2d),~(3c),~(4e) of
\fullref{tbl2},
see \fullref{rem:tables}. Observe that $\vartheta=-1$,
$\sgn_\varepsilon(v)=1$ in all ``mixed'' cases.

Specifically, we denote
$B=\aa\bb\aa^{-\varepsilon}\bb^{-1}\in F_2=\langle \aa,\bb\mid\rangle$ and
study the equation
$$\xalpha\ybeta\xalpha^{-\delta}\ybeta^{-1}=vB^{-1}v^{-1}B$$
in the group $N=\llangle B\rrangle $ with two unknowns $\xalpha\in N$, $\ybeta\in F_2$.
A solution of this equation is called {\it faithful} if
$w_\varepsilon(\ybeta)=\delta$. The parameters $\varepsilon,\delta\in\{1,-1\}$
and the conjugation parameter $v\in F_2$ of the equation are not arbitrary, but
run through the following families, corresponding to the ``mixed'' cases,
see \fullref{rem:tables} and \fullref{def:mixed}:

``Mixed'' case for faithful solutions (case~(4c) of \fullref{tbl1}):
\begin{enumerate}
\item[(4)] $\delta=\varepsilon=-1$, $\bar v=\bar \bb^{2n}$, $n\in\ZZ$.
\end{enumerate}

``Mixed'' cases for non-faithful solutions (cases~(2d), (3c), (4e) of
\fullref{tbl2}):
\begin{enumerate}
\item[(2)] $\delta=1$, $\varepsilon=-1$, $\bar v=\bar \bb^{2n}$, $n\in\ZZ$;
\item[(3)] $\delta=-1$, $\varepsilon=1$, $\bar v=\bar \aa^{2m}\bar
\bb^{2n}$, $m,n\in\ZZ$;
\item[(4)] $\delta=\varepsilon=-1$, $\bar v=\bar \aa^{2m}\bar \bb^{4n}$,
$m,n\in\ZZ$.
\end{enumerate}

As above, $\bar v\in\pi$ denotes the class of $v\in F_2$ in $\pi=F_2/N$.
In each of these four ``mixed'' cases, let us write the element $v$ in the
following canonical form:
\begin{align*}
v&=\bb^{2n}\prod B_{v_i}^{n_i}; \\
v&=\bb^{2n}\prod B_{v_i}^{n_i}; \\
v&=c^{2d}\prod B_{v_i}^{n_i}, & c&=\aa^{m/d}\bb^{n/d} \text{ if } |m|+|n|>0, &
  v&=\prod B_{v_i}^{n_i} \text{ if } m=n=0; \\
v&=c^{2d}\prod B_{v_i}^{n_i}, & c&=\aa^{m/d}\bb^{2n/d} \text{ if } |m|+|n|>0, &
  v&=\prod B_{v_i}^{n_i} \text{ if } m=n=0;
\end{align*}
respectively, where $\prod B_{v_i}^{n_i}=\prod_{i=1}^r B_{v_i}^{n_i}$,
$v_i\in F_2$, $n_i\in\ZZ$, $B_{v_i}=v_i B v_i^{-1}$, $1\le i\le r$,
$d=\gcd(m,n)$ if $|m|+|n|>0$.

\begin{table}[ht!]
\begin{minipage}{\linewidth}
\renewcommand{\thempfootnote}{\rm(\roman{mpfootnote})}
\begin{footnotesize}
\begin{center}
\begin{tabular}{||r|c|c|c|c|c|c||}
\hhline{|t:=======:t|}
Case& $\delta$ & $\varepsilon$ & \multicolumn{3}{c|}{conditions on $v$} &
faithful solution $(\xalpha,\ybeta)$ \\
\hhline{|:=======:|}
(4) a &$-$&$-$& \multicolumn{2}{c|}{$v=\bb^{2n}\prod B^{n_i}_{v_i}$}
       & $n=0$, $p_{Q'}(V')\ne0$ & $\emptyset$\hyperlink{mpfn23}{$^{\rm(iii)}$}
\\
\hhline{||~|~|~|~|~|-|-||}
b &   &   & \multicolumn{2}{c|}{}
 & $n\in\ZZ\setminus\{0\}$, $V'$ does not  & $\emptyset$\hyperlink{mpfn23}{$^{\rm(iii)}$}
\\
  &   &   & \multicolumn{2}{c|}{}
  & \!satisfy~(iv) of~\fullref{rem:indep}(A)\!   &
\\
\hhline{||~|~|~|-|-|-|-||}
c &   &   & \multicolumn{2}{c|}{$v=u^2$} & $w_-(u)=-1$ &
         $([u^2 B^{-1}, u^{-1}],u^{-1})$,
\\
  &   &   & \multicolumn{2}{c|}{}        &             &
         $([u,B^{-1}], B^{-1}uB)$\hyperlink{mpfn21}{$^{\rm(i)}$}
\\
\hhline{||~|~|~|-|-|-|-||}
d &   &   & \multicolumn{2}{c|}{$v=(\aa\bb)^{2n}$} & $n\in\ZZ$
                                         & $([(\aa\bb)^{2n},\bb],\bb)$\hyperlink{mpfn21}{$^{\rm(i)}$}
\\
\hhline{||~|~|~|-|-|-|-||}
e &   &   & \multicolumn{2}{c|}{$v=(\aa\bb\aa\bb^{-1})^m$} & $w_-(u)=-1$ &
$(1,\ u)$\hyperlink{mpfn21}{$^{\rm(i)}$}
\\
\hhline{|b:=======:b|}
\end{tabular}
\end{center}
\end{footnotesize}
\smallskip
\caption{Mixed cases for faithful solutions of
$\xalpha\ybeta\xalpha\ybeta^{-1}=vB^{-1} v^{-1}B$ where
$B=\aa\bb\aa\bb^{-1}$, $p_K(v)=\bar \bb^{2n}$}
\label{tbl3}
\end{minipage}
\end{table}

\begin{table}[ht!]
\begin{minipage}{\linewidth}
\renewcommand{\thempfootnote}{\rm(\roman{mpfootnote})}
\begin{footnotesize}
\begin{tabular}{||r|c|c|c|c|c|c||}
\hhline{|t:=======:t|}
Case & $\delta$ & $\varepsilon$ & \multicolumn{3}{c|}{conditions on $v$}
& non-faithful solution $(\xalpha,\ybeta)$
\\
\hhline{|:=======:|}
(2) a & +    &  $-$   & \multicolumn{2}{c|}{$v=\bb^{2n}\prod B^{n_i}_{v_i}$}
                                 & $n=0$, $p_Q(V)\ne0$ & $\emptyset$\hyperlink{mpfn23}{$^{\rm(iii)}$}
\\
\hhline{||~|~|~|~|~|-|-||}
b &   &   & \multicolumn{2}{c|}{}
 & $n\in\ZZ{\setminus}\{0\}$, $V$ does not  & $\emptyset$\hyperlink{mpfn23}{$^{\rm(iii)}$}
\\
                &   &   & \multicolumn{2}{c|}{}
  & satisfy~(iv) of~\ref{rem:indep}(A)   &
\\
\hhline{||~|~|~|-|-|-|-||}
c &   &   & \multicolumn{2}{c|}{$v=u^{2k}$} & $w_-(u)=-1$,
                                                            $k\in\ZZ$ &
                   $(u^{2k}(uB)^{-2k}, B^{-1}u^{-1})$\hyperlink{mpfn21}{$^{\rm(i)}$}
\\
\hhline{||~|~|~|-|-|-|-||}
d &     &     & \multicolumn{2}{c|}{$v=B\bb^{2n}$} & $n\in\ZZ$ &
                $((\aa\bb\aa)^{2n}\bb^{-2n},\bb^{2n}(\aa\bb\aa)^{1-2n})$\hyperlink{mpfn21}{$^{\rm(i)}$}
\\
\hhline{||~|~|~|-|-|-|-||}
e &     &     & \multicolumn{2}{c|}{$v=\bb^2B_\aa$} &   &
   $(B_{\bb^2\aa} B^{-1}_{\bb^2}
B^{-1}_{\bb^2\aa}B^{-1}_{\bb^2\aa^2\bb^{-1}},\quad$ \\
     &     &     & \multicolumn{2}{c|}{}           &   &
    $\quad B^{-2}B^{-1}_\aa
\aa^2\bb^{-1})$\hyperlink{mpfn21}{$^{\rm(i)}$}\hyperlink{mpfn22}{$^{\rm(ii)}$}
\\
\hhline{||~|~|~|-|-|-|-||}
f &     &     &  \multicolumn{2}{c|}{$v=\bb^2B_{\aa^k}$} & $k\in\ZZ$,
$k\ne 0,1$  & $\emptyset$\hyperlink{mpfn22}{$^{\rm(ii)}$}\hyperlink{mpfn23}{$^{\rm(iii)}$}
\\
\hhline{||~|~|~|-|-|-|-||}
g &     &     & \multicolumn{2}{c|}{$v=B_{\aa}B_{\aa^{-1}}$} & & $\emptyset$\hyperlink{mpfn22}{$^{\rm(ii)}$}
\\
\hhline{|:=======:|}
(3) a & $-$  &  +  & \multicolumn{2}{c|}{$v=\prod B^{n_i}_{v_i}$}
                       & $p_{Q'}(V')\ne 0$ & $\emptyset$\hyperlink{mpfn23}{$^{\rm(iii)}$}
\\
\hhline{||~|~|~|-|-|-|-||}
   b &     &     & \multicolumn{2}{c|}{$v=c^{2d}\prod B^{n_i}_{v_i}$}
      & $\exists g\in\pi{\setminus}\hat\L_1$, $\hat\varepsilon_g(\hat V'_g)\ne 0$
      & $\emptyset$\hyperlink{mpfn23}{$^{\rm(iii)}$}
\\
\hhline{||~|~|~|-|-|-|-||}
   c   &      &     & \multicolumn{2}{c|}{$v=u^2$} & &
                        $([u^2B^{-1}, u^{-1}], u^{-1})$,
\\
        &      &     & \multicolumn{2}{c|}{} & &
                        $([u,B^{-1}], B^{-1}uB)$\hyperlink{mpfn21}{$^{\rm(i)}$}
\\
\hhline{|:=======:|}
(4) a & $-$    &  $-$   & \multicolumn{2}{c|}{$v=\prod B^{n_i}_{v_i}$}
                         & $p_{Q'}(V')\ne0$ & $\emptyset$\hyperlink{mpfn23}{$^{\rm(iii)}$}
\\
\hhline{||~|~|~|-|-|-|-||}
   b &     &     & \multicolumn{2}{c|}{$v=c^{2d}\prod B^{n_i}_{v_i}$}
      & $\exists g\in\pi{\setminus}\hat\L_1$, $\hat\varepsilon_g(\hat V'_g)\ne 0$
      & $\emptyset$\hyperlink{mpfn23}{$^{\rm(iii)}$}
\\
\hhline{||~|~|~|-|-|-|-||}
   c &     &     & \multicolumn{2}{c|}{$v=B^m$} & $m\in\ZZ$, $w_-(u)=1$ &
                           $(1, u)$\hyperlink{mpfn21}{$^{\rm(i)}$}
\\
\hhline{||~|~|~|-|-|-|-||}
d &        &        & \multicolumn{2}{c|}{$v=u^2$} &
                                           $w_-(u)=1$ &
                $([u^2B^{-1}, u^{-1}],u^{-1})$,
\\
       &        &        & \multicolumn{2}{c|}{} &   &
                $([u,B^{-1}], B^{-1}uB)$\hyperlink{mpfn21}{$^{\rm(i)}$}
\\
\hhline{|b:=======:b|}
\end{tabular}
\end{footnotesize}
\smallskip

\caption{Mixed  cases for non-faithful solutions of
$\xalpha\ybeta\xalpha^{-\delta}\ybeta^{-1}=vB^{-1} v^{-1}B$
where $B=\aa\bb\aa^{-\varepsilon}\bb^{-1}$ and 
(due to \fullref{tbl2}) $p_K(v)=\bar\bb^{2n}$ in Case (2),
 $p_T(v)=\bar\aa^{2m}\bar\bb^{2n}$ in Case (3),
 $p_K(v)=\bar\aa^{2m}\bar\bb^{4n}$ in Case (4);
 if $|m|+|n|>0$ in Case (3) or (4), one denotes $d:=\gcd(m,n)$ and $c:=\aa^{m/d}\bb^{n/d}$ or $c:=\aa^{m/d}\bb^{2n/d}$ (respectively).}
\label{tbl4}

\footnotetext[1]{\hypertarget{mpfn21}{Direct} calculation.}
\footnotetext[2]{\hypertarget{mpfn22}{Using} the Wicks forms (see Wicks~\cite{W},
Vdovina~\cite{V1,V3}, Culler~\cite{Cu} and~\fullref{ex:Wicks}).}
\footnotetext[3]{\hypertarget{mpfn23}{There} is no solution of the second
derived equation, see Theorems~\ref{thm:second}, \ref{thm:partit34nf},
\ref{thm:partit24f}, and \fullref{rem:indep}.}
\end{minipage}
\end{table}

Denote $\Z=\ZZ$ if $\delta=1$, $\Z=\ZZ_2$ if $\delta=-1$. Denote by
$\bar u\in\pi=F_2/N$ the image of $u\in F_2$ under the projection
$F_2\to\pi$, by $V\in \ZZ[\pi]$ the polynomial $V=\sum n_i\bar v_i
\in \ZZ[\pi]$, and by $V'\in\Z[\pi]$ either $V':=V$ if $\delta=1$ or
$V':=V\mod 2$ if $\delta=-1$. Consider the actions on $\pi$ of the
groups $\hhat G\subset\ttilde G_L\subset\hat G_L$, $L\in\ZZ$, in the
first two ``mixed'' cases, see~\eqref{eq:hatG'} and \eqref{eq:ttildeG}, and the action of the group $\hat G$ in the remaining
two ``mixed'' cases, see~\eqref{eq:hatG}. Consider the
corresponding orbits $\hhat\L_g$, $\ttilde\L_{g,L}$, $\hat\L_{g,L}$,
and $\hat\L_g$, $g\in\pi$, see Propositions~\ref{pro:orbits34nf}
and~\ref{pro:orbits24f}. Consider the corresponding augmentations (or
the twisted augmentations in the case of the equation~\eqref{eqn2}
\begin{align*}
\hhat\varepsilon_g\co  \Z[\hhat\L_g] &\to \Z,  &
  \ttilde\varepsilon_{g,L}\co  \Z[\ttilde\L_{g,L}] &\to\Z, \\
\hat\varepsilon_{g,L} \co  \Z[\hat\L_{g,L}] &\to \Z \mbox{ or }\ZZ_2, &
  \hat\varepsilon_g \co  \ZZ_2[\hat\L_g] &\to \ZZ_2,
\end{align*}
see~\eqref{eq:varepsilong}, \eqref{eq:hhatvarepsilong}, \eqref{eq:defec},
\eqref{eq:defec:hat} and~\eqref{eq:hatvarepsilong}. Here $g\in\pi$ as
in~(iv$_{\mbox{\footnotesize e}}$), (iv$_{\mbox{\footnotesize o}}$) of
\fullref{rem:indep}(A) in the first two ``mixed'' cases, while
$g\in\pi\setminus\hat\L_1$ in the other two ``mixed'' cases.
Consider the quotients $Q$ and $Q'$ as in~\eqref{eq:Q} and \eqref{eq:Q'},
and the projections $p_Q\co \ZZ[\pi]\to Q$ and $p_{Q'}\co \ZZ_2[\pi]\to Q'$.

Many of the non-existence results in Tables~\ref{tbl3} and~\ref{tbl4} follow from the non-existence
of a solution of the corresponding second derived equation, see
Theorems~\ref{thm:second}, \ref{thm:partit34nf} and~\ref{thm:partit24f},
and \fullref{rem:indep}.

\bibliographystyle{gtart}
\bibliography{link}

\end{document}